\numberwithin{equation}{section}
\newtheorem{theorem}{Theorem}[section]
\newtheorem{corollary}[theorem]{Corollary}
\newtheorem{lemma}[theorem]{Lemma}
\newtheorem{proposition}[theorem]{Proposition}
\theoremstyle{definition}
\newtheorem{definition}[theorem]{Definition}
\newcommand{\Q}{\mathbb{Q}}
\newcommand{\Z}{\mathbb{Z}}
\newcommand{\N}{\mathbb{N}}
\newcommand{\h}{\mathfrak{h}}
\newcommand{\g}{\mathfrak{g}}
\title[Classical limit of quantum Borcherds-Bozec algebras]
{Classical limit of Quantum Borcherds-Bozec
Algebras}
\author[Zhaobing Fan]{Zhaobing Fan}
\address{Harbin Engineering University,
Harbin, China}
\email{fanz@ksu.edu}
\thanks{ }
\author[Seok-Jin Kang]{Seok-Jin Kang}
\address{Korea Research Institute of Arts and Mathematics,
Asan-si, Chungcheongnam-do, 31551, Korea}
\email{soccerkang@hotmail.com}
\thanks{}
\author[Young Rock Kim]{Young Rock Kim${}^{*}$}
\address{Graduate School of Education, Hankuk University of Foreign Studies, Seoul, 02450,  Korea}
\email{rocky777@hufs.ac.kr} %
\thanks{${}^{*}$ Corresponding author. All authors have equal contributions.}
\author[Bolun Tong]{Bolun Tong}
\address{Harbin Engineering University,
Harbin, China}
\email{tbl\_2019@hrbeu.edu.cn}
\address{}
\keywords{quantum Borcherds-Bozec algebra, classical limit, commutation relation
Borcherds-Bozec algebra}
\subjclass[2010] {17B37, 17B67, 16G20}
\begin{document}

\begin{abstract}

Let $\g$ be a Borcherds-Bozec algebra, $U(\g)$ be its universal enveloping algebra and
$U_{q}(\g)$ be the corresponding quantum Borcherds-Bozec algebra. We show that the
classical limit of $U_{q}(\g)$ is isomorphic to $U(\g)$ as Hopf algebras. Thus $U_{q}(\g)$ can be regarded
as a quantum deformation of $U(\g)$. We also give explicit formulas for the commutation
relations among the generators of $U_{q}(\g)$.

\end{abstract}

\maketitle

\section*{Introduction}
\vskip 2mm

The {\it quantum Borcherds-Bozec algebras} were introduced by T. Bozec in his research of perverse sheaves theory for quivers with loops \cite{Bozec2014b, Bozec2014c,BSV2016}. They can be treated as a further generalization of quantum generalized Kac-Moody algebras. Even though they use the same Borcherds-Cartan data, the construction of the quantum groups are quite different.

\vskip 2mm

More precisely, the quantum Borcherds-Bozec algebras have more generators and defining relations than quantum generalized Kac-Moody algebras. For each simple root $\alpha_i$ with imaginary index, there are infinitely many generators $e_{il}, f_{il}$$(l \in \Z_{>0})$ whose degrees are $l$ multiples of $\alpha_i$ and $-\alpha_i$. Bozec deals with these generators by treating them as similar positions as divided powers $\theta_i^{(l)}$ in Lusztig algebras.

\vskip 2mm

 Bozec gave the general definition of Lusztig sheaves for arbitrary quivers (possibly with multiple loops) and constructed the canonical basis for the positive half of a quantum Borcherds-Bozec algebra in terms of simple perverse sheaves (cf. \cite{Lus90}). In \cite{Bozec2014c}, he studied the crystal basis theory for quantum Borcherds-Bozec algebras. He defined the notion of Kashiwara operators and abstract crystals, which provides an important framework for Kashiwara's grand-loop argument (cf. \cite{Kas91}). He also gave a geometric construction of the crystal for the negative half of a quantum Borcherds-Bozec algebra based on the theory of Lusztig perverse sheaves associated to quivers with loops (cf. \cite{KS1997, KKS2009}), and gave a geometric realization of generalized crystals for the integrable highest weight representations $via$ Nakajima's quiver varieties (cf. \cite{Saito2002, KKS2012}).

\vskip 2mm
For a Kac-Moody algebra $\g$,  G.  Lusztig showed that the integrable highest weight module $\overline{L}$ over $U(\g)$ can be
deformed to those integrable highest weight module $L$ over $U_q(\g)$ in such a way that the dimensions of weight spaces are invariant under the deformation (cf. \cite{Lus88, Jan}).
Let $\mathscr A=\Q[q,q^{-1}]$ be the Laurent polynomial rings, Lusztig constructed a $\mathscr A$-subalgebra $U_{\mathscr A}$ of $U_q(\g)$ generated by divided powers and $k_i^{\pm}$, and defined a $U_{\mathscr A}$-submodule $L_{\mathscr A}$ of $L$. He proved that $F_0\otimes_{\mathscr A}L_{\mathscr A}$ is isomorphic to $\overline L$ as $U(\g)$-modules, where $F_0=\mathscr A / I$ and $I$ is the ideal of $\mathscr A$ generated by $(q-1)$.
\vskip 2mm

In \cite[Chpter 3]{HK2002}, J. Hong and S.-J. Kang modified Lusztig's approach to show that the $U_q(\g)$ is a deformation of $U(\g)$ as a Hopf algebra and show that a highest weight $U(\g)$-module admits a deformation to a highest weight $U_q(\g)$-module. They used the $\mathbb A_1$-form of $U_q(\g)$ and highest weight $U_q(\g)$-module, where $\mathbb A_1$ is the localization of $\Q[q]$ at the ideal $(q-1)$. We can see that $\mathscr A=\Q[q,q^{-1}]\subseteq \mathbb A_1$.

\vskip 2mm
In this paper, we study the classical limit theory of quantum Borcherds-Bozec algebras. We first review some basic notions of Borcherds-Bozec algebras and quantum Borcherds-Bozec algebras. For their representation theory, the readers may refer to \cite{Kang2019, Kang2019b}. As we show in Appendix, the commutation relations between $e_{il}$ and $f_{jk}$ are rather complicated. For the aim of classical limit, we need another set of generators. Thanks to Bozec, there exists an alternative set of primitive generators in $U_q{(\g)}$,  which we denote by $s_{il}$ and $t_{il}$. They satisfy a simpler set of commutation relations $$s_{il}t_{jk}-t_{jk}s_{il}=\delta_{ij}\delta_{lk}\tau_{il}(K_i^l-K_i^{-l})$$
for some constants $\tau_{il}\in \Q(q)$.
Using Lusztig's approach, we prove that these generators also satisfy the Serre-type relations (cf. \cite[Chapter 1]{Lusztig}).
\vskip 2mm
In Section 3, we define the $\mathbb A_1$-form of quantum Borcherds-Bozec algebras and their highest weight representations.
 We show that the triangular decomposition of $U_q(\g)$ carries over to $\mathbb A_1$-form. In Section 4, we study the process of taking the limit $q\rightarrow1$. Let $U_1=\Q\otimes_{\mathbb A_1}U_{\mathbb A_1}$ be a $\Q$-algebra, where $U_{\mathbb A_1}$ is the $\mathbb A_1$-form of $U_q(\g)$. We prove that the classical limit $U_1$ of $U_q(\g)$ is isomorphic to the universal enveloping algebra $U(\g)$ as Hopf algebras, and when we take the classical limit, the Verma module and highest weight modules of $U_q(\g)$ {\it tend to} those Verma module and highest weight modules of $U(\g)$, respectively. Finally, we give the concrete commutation relations between the generators $e_{il}$ and $f_{jk}$ of $U_q(\g)$ in Appendix, they have an interesting combinatorial structure.
\vskip 2mm

{\it Acknowledgements}. Z. Fan is partially supported by the NSF of China grant 11671108 and the Fundamental Research Funds for the central universities GK2110260131. S.-J.  Kang was supported by Hankuk University of Foreign Studies Research Fund. Y. R. Kim was supported by the Basic Science Research Program of the NRF (Korea) under grant No. 2015R1D1A1A01059643.

\vskip 8mm

\vskip 10mm

\section{Borcherds-Bozec algebras}

\vskip 2mm

Let $I$ be an index set possibly countably infinite. An integer-valued matrix $A=(a_{ij})_{i,j\in I}$ is called an {\it even symmetrizable Borcherds-Cartan matrix} if it satisfies the following conditions:
\begin{itemize}
\item[(i)] $a_{ii}=2, 0, -2, -4, \ldots$,

\item[(ii)] $a_{ij} \in \Z_{\le 0}$ for $i \neq j$,

\item[(iii)] there is a diagonal matrix $D=\text{diag} (r_i \in
\Z_{>0} \mid i \in I)$ such that $DA$ is symmetric.
\end{itemize}

\vskip 3mm
Set $I^{\text{re}}:=\{ i \in I \mid a_{ii}=2 \}$, the set of {\it
real indices} and $I^{\text{im}}:= \{ i \in I \mid a_{ii} \le 0 \}$,
the set of {\it imaginary indices}. We denote by $I^{\text{iso}}: =
\{ i \in I \mid a_{ii}=0 \}$ the set of {\it isotropic indices}.

\vskip 3mm

A {\it Borcherds-Cartan datum} consists of

\begin{itemize}
\item[(a)] an even symmetrizable Borcherds-Cartan matrix
$A=(a_{ij})_{i,j \in I}$,

\item[(b)] a free abelian group $P^{\vee}=\left( \bigoplus_{i\in I}{\Z h_i}\right)\oplus \left(\bigoplus_{i\in I}{\Z d_i}\right)$, the {\it dual weight lattice},

\item[(c)] $\h=\Q \otimes_{\Z}P^{\vee}$, the {\it Cartan subalgebra},

\item[(d)]$P=\{\lambda \in \h^* \mid \lambda(P^{\vee})\subseteq\Z \}$, the {\it weight lattice},

\item[(e)] $\Pi^{\vee} = \{ h_i \in P^{\vee} \mid i \in I \}$, the set of {\it simple coroots},

\item[(f)]$\Pi = \{ \alpha_i \in P \mid i \in I \}$, the set of {\it simple roots},
which  is linearly independent over $\Q$ and satisfies
  \begin{equation*}
   \alpha_j(h_i)=a_{ij},\ \alpha_j(d_i)=\delta_{ij} \ \ \text{for all} \ i,j  \in I.
  \end{equation*}

\item[(g)]for each $i \in I$, there is an element $\Lambda_i \in P$     such that
  \begin{equation*}
   \Lambda_i(h_j)=\delta_{ij}, \ \Lambda_i(d_j)=0\ \ \text{for all} \ i,j  \in I.
  \end{equation*}
  The $\Lambda_i(i\in I)$ are called the {\it fundamental weights}.
\end{itemize}

\vskip 3mm

We denote by
$$P^+:=\{\lambda \in P \mid \lambda(h_i)\geq 0 \ \ \text{for all} \ i \in I \}$$
the set of {\it dominant integral weights}. The free abelian group $Q:=\bigoplus_{i \in I} {\Z \alpha_i}$ is called the {\it root lattice}. Set $Q_+=\sum_{i \in I}{\Z_{\geq0}\alpha_i}$ and $Q_-=-Q_+$. For $\beta=\sum k_i\alpha_i \in Q_+ $, we define its {\it hight} to be $ \text{ht} (\beta):=\sum k_i$.

\vskip 3mm

There is a non-degenerate symmetric bilinear form $( \ , \ )$ on $\h^*$ satisfying
$$(\alpha_i,\lambda)=r_i\lambda(h_i) \ \ \text{for all} \ \lambda \in \h^*,$$
and therefore we have $$(\alpha_i,\alpha_j)=r_ia_{ij}=r_ja_{ji} \ \ \text{for all} \  i,j \in I.$$

\vskip 3mm

For $i \in I^{\text{re}}$, we define the {\it simple reflection}
$\omega_{i} \in GL(\h^{*})$ by
$$\omega_{i} (\lambda) =  \lambda-\lambda(h_{i})
\alpha_{i} \ \ \text{for} \ \lambda \in \h^{*}.$$
The subgroup $W$
of $GL(\h^*)$ generated by $\omega_{i}$ $(i \in I^{\text{re}})$ is called
the {\it Weyl group} of $\g$. One can easily verify that the
symmetric bilinear form $(\ , \ )$ is $W$-invariant.

\vskip 3mm

Let $I^{\infty}:= (I^{\text{re}} \times \{1\}) \cup (I^{\text{im}}
\times \Z_{>0})$. For simplicity, we will often write $i$ for $(i,1)$ if $i \in I^{\text{re}}$.

\vskip 3mm

\begin{definition}
The {\it Borcherds-Bozec algebra} $\g$ associated with a
Borcherds-Cartan datum $(A, P, \Pi, P^{\vee}, \Pi^{\vee})$ is the
Lie algebra over $\Q$ generated by the elements $e_{il}$, $f_{il}$
$((i,l) \in I^{\infty})$ and $\h$ with defining relations
\begin{equation}\label{eq:bozec-alg}
\begin{aligned}
& [h, h']=0 \ \ \text{for} \ h, h' \in \h,\\
& [e_{ik}, f_{jl}] = k \, \delta_{ij} \, \delta_{kl} \, h_{i} \ \
\text{for} \ i,j \in I, k, l \in \Z_{>0},\\
& [h, e_{jl}]=l  \alpha_{j}(h)e_{jl}, \quad
 [h, f_{jl}]= -l \alpha_{j}(h)f_{jl}, \\
& (\text{ad} e_{i})^{1-la_{ij}}(e_{jl}) = 0 \ \ \text{for} \ i\in
I^{\text{re}}, \, i \neq (j,l), \\
& (\text{ad} f_{i})^{1-la_{ij}}(f_{jl}) = 0 \ \ \text{for} \ i\in
I^{\text{re}}, \, i \neq (j,l), \\
& [e_{ik}, e_{jl}] = [f_{ik}, f_{jl}] =0 \ \ \text{for} \ a_{ij}=0.
\end{aligned}
\end{equation}
\end{definition}

\vskip 3mm

Let $U(\g)$ be the universal enveloping algebra of $\g$. Since we have the following equations in $U(\g)$
$$(\text{ad}x)^m(y)=\sum_{k=0}^m(-1)^k
{\binom m k}x^{m-k}yx^k \ \  \text{for} \ x,y \in U(\g),m \in \Z_{\geq0},
$$
we obtain the presentation of $U(\g)$ with generators and relations given below.
\vskip 3mm

\begin{proposition}\label{uea}
{\rm The universal enveloping algebra $U(\g)$ of $\g$ is an
associative algebra over $\Q$ with unity generated by $e_{il}$, $f_{il}$
$((i,l) \in I^{\infty})$ and $\h$ subject to the following defining relations
\begin{equation}\label{eq:un enveloping alg}
\begin{aligned}
& hh'=h'h \ \ \text{for} \ h, h' \in \h,\\
& e_{ik}f_{jl}-f_{jl}e_{ik} = k \, \delta_{ij} \, \delta_{kl} \, h_{i} \ \ \text{for} \ i,j \in I, k, l \in \Z_{>0},\\
& he_{jl}-e_{jl}h=l  \alpha_{j}(h)e_{jl}, \quad
 hf_{jl}-f_{jl}h= -l \alpha_{j}(h)f_{jl}, \\
& \sum_{k=0}^{1-la_{ij}}(-1)^k
{\binom {1-la_{ij}} k}
{e_i}^{{1-la_{ij}}-k}e_{jl}\, e_i^k=0 \ \ \text{for} \ i\in
I^{\text{re}}, \, i \neq (j,l), \\
& \sum_{k=0}^{1-la_{ij}}(-1)^k
{\binom {1-la_{ij}} k}
{f_i}^{{1-la_{ij}}-k}f_{jl}\, f_i^k=0 \ \ \text{for} \ i\in
I^{\text{re}}, \, i \neq (j,l), \\
& e_{ik}e_{jl}-e_{jl}e_{ik} = f_{ik}f_{jl}-f_{jl}f_{ik} =0 \ \ \text{for} \ a_{ij}=0.
\end{aligned}
\end{equation} }
\end{proposition}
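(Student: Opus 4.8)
The plan is to show that the associative $\Q$-algebra $\mathcal{A}$ with unity generated by $e_{il}, f_{il}$ $((i,l)\in I^{\infty})$ and $\h$ subject to the relations \eqref{eq:un enveloping alg} satisfies the universal property of $U(\g)$; equivalently, I will produce mutually inverse algebra homomorphisms between $\mathcal{A}$ and $U(\g)$ that fix every generator. The one computational ingredient is the elementary identity $(\operatorname{ad} x)^{m}(y)=\sum_{k=0}^{m}(-1)^{k}\binom{m}{k}x^{m-k}yx^{k}$ recalled just before the statement, which holds in any associative algebra with $\operatorname{ad}(x)(y):=xy-yx$.

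First I would build a homomorphism $\psi\colon U(\g)\to\mathcal{A}$. By the universal property of the enveloping algebra it suffices to give a Lie algebra homomorphism $\g\to\mathcal{A}_{\mathrm{Lie}}$, where $\mathcal{A}_{\mathrm{Lie}}$ is $\mathcal{A}$ equipped with the commutator bracket $[a,b]=ab-ba$. Sending $e_{il}\mapsto e_{il}$, $f_{il}\mapsto f_{il}$, $h\mapsto h$, I would verify that the defining relations \eqref{eq:bozec-alg} of $\g$ hold in $\mathcal{A}_{\mathrm{Lie}}$: all of them except the two Serre-type relations are read off directly from \eqref{eq:un enveloping alg} once each $xy-yx$ is rewritten as $[x,y]$, while the Serre relations follow by expanding $(\operatorname{ad} e_i)^{1-la_{ij}}(e_{jl})$ and $(\operatorname{ad} f_i)^{1-la_{ij}}(f_{jl})$ in $\mathcal{A}_{\mathrm{Lie}}$ via the identity above and then invoking the two Serre-type relations in \eqref{eq:un enveloping alg}. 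This produces $\psi$.

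Next I would build a homomorphism $\phi\colon\mathcal{A}\to U(\g)$ with $e_{il}\mapsto e_{il}$, $f_{il}\mapsto f_{il}$, $h\mapsto h$. For $\phi$ to be well defined one needs the left-hand sides of \eqref{eq:un enveloping alg} to vanish in $U(\g)$. Invoking the PBW theorem, I regard $\g\subseteq U(\g)$, so that the bracket of $\g$ is the commutator in $U(\g)$ and the relations \eqref{eq:bozec-alg} hold there; rewriting each bracket as a commutator yields every relation of \eqref{eq:un enveloping alg} other than the Serre ones, and the latter follow by expanding the Lie relations $(\operatorname{ad} e_i)^{1-la_{ij}}(e_{jl})=0$ and $(\operatorname{ad} f_i)^{1-la_{ij}}(f_{jl})=0$ inside $U(\g)$ through the same identity. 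Since the composites $\phi\circ\psi$ and $\psi\circ\phi$ both fix the algebra generators $e_{il}, f_{il}$ and the elements of $\h$, and these generate $U(\g)$ and $\mathcal{A}$ respectively, the two maps are mutually inverse; hence $U(\g)\cong\mathcal{A}$, which is the assertion.

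I do not expect a serious obstacle: this is the standard passage from a presentation of a Lie algebra to a presentation of its enveloping algebra. The points that need care are (i) noting that, by PBW, $\g$ genuinely embeds in $U(\g)$ with its bracket realized as the commutator, so that the symbol $\operatorname{ad} e_i$ in the Serre relations may legitimately be interpreted associatively, and (ii) the sign and binomial-coefficient bookkeeping when transporting the Serre relations between the Lie and associative settings through $(\operatorname{ad} x)^{m}(y)=\sum_{k}(-1)^{k}\binom{m}{k}x^{m-k}yx^{k}$.
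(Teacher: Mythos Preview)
Your proposal is correct and follows essentially the same approach as the paper: the paper simply records the identity $(\operatorname{ad} x)^{m}(y)=\sum_{k=0}^{m}(-1)^{k}\binom{m}{k}x^{m-k}yx^{k}$ in $U(\g)$ and declares that the presentation follows, while you spell out the standard argument (mutually inverse homomorphisms via the universal properties, with the Serre relations transported by that same identity). There is no substantive difference in method.
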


\vskip 3mm

The universal enveloping algebra $U(\g)$ has a Hopf algebra structure given by
\begin{equation}\label{str}
\begin{aligned}
&\Delta(x)=x\otimes 1+1\otimes x,\\
&\varepsilon(x)=0,\\
& S(x)=-x  \ \ \text{for} \ x \in \g,
\end{aligned}
\end{equation}
where $\Delta: U(\g) \rightarrow U(\g) \otimes U(\g)$ is the comultiplication,
$\varepsilon: U(\g) \rightarrow \Q$ is the counit, and $S:U(\g) \rightarrow U(\g)$ is the antipode.

\vskip 3mm

Furthermore, by the Poincar\'{e}-Brikhoff-Witt Theorem, the universal enveloping algebra also has the {\it triangular decomposition}
\begin{equation}\label{trian}
U(\g)\cong U^-(\g)\otimes U^0(\g) \otimes U^+(\g),
\end{equation}
where $U^+(\g)$ (resp. $U^0(\g)$ and $U^-(\g)$) be the subalgebra of $U(\g)$ generated by the elements $e_{il}$ (resp. $\h$ and $f_{il}$) for $(i,l) \in I^{\infty}$.

\vskip 3mm

In \cite{Kang2019}, Kang studied the representation theory of the Borcherds-Bozec algebras. We list some results that we will use later.

\vskip 3mm

\begin{proposition} \label{prop:hwmodule} \cite{Kang2019} \

\vskip 2mm

{\rm
\begin{enumerate}
\item[(a)] Let $\lambda \in P^{+}$ and $V(\lambda)=U(\g) v_{\lambda}$ be
the irreducible highest weight $\g$-module. Then we have
\begin{equation}\label{eq:hwmodule}
\begin{aligned}
& f_{i}^{ \lambda (h_{i}) + 1} v_{\lambda} =0 \ \
\text{for} \ i \in I^{\text{re}}, \\
& f_{il} \, v_{\lambda} =0 \ \ \text{for} \ (i,l) \in I^{\infty} \
\text{with} \ \lambda (h_{i}) =0.
\end{aligned}
\end{equation}

\vskip 2mm

\item[(b)] Every highest weight $\g$-module with highest weight $\lambda
\in P^{+}$ satisfying \eqref{eq:hwmodule} is isomorphic to $V(\lambda)$.
\end{enumerate}}

\end{proposition}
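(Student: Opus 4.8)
The plan is to realize $V(\lambda)$ as the unique irreducible quotient of the Verma module $M(\lambda)$ over $\g$ and to show that the relations \eqref{eq:hwmodule} are exactly those needed to cut $M(\lambda)$ down to $V(\lambda)$. For part (a), write $n_i=\lambda(h_i)\ge 0$ and consider the vectors $w_i=f_i^{\,n_i+1}v_\lambda$ for $i\in I^{\text{re}}$ and $w_{il}=f_{il}v_\lambda$ for $(i,l)\in I^{\infty}$ with $\lambda(h_i)=0$. I would first check that each of these is a highest weight (primitive) vector, i.e. is killed by every $e_{jk}$. Whenever $(j,k)$ is not the distinguished index, the relation $e_{jk}f_{il}-f_{il}e_{jk}=k\,\delta_{ij}\,\delta_{kl}\,h_i$ lets one move $e_{jk}$ past the $f$'s onto $v_\lambda$, so $e_{jk}w=0$; in the one remaining case one uses the $\mathfrak{sl}_2$-identity $[e_i,f_i^{\,m}]=m\,f_i^{\,m-1}(h_i-m+1)$ to get $e_iw_i=(n_i+1)f_i^{\,n_i}(h_i-n_i)v_\lambda=0$, and $e_{il}w_{il}=(f_{il}e_{il}+l\,h_i)v_\lambda=l\,\lambda(h_i)v_\lambda=0$. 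Since each of these vectors has weight strictly below $\lambda$, the submodule it generates cannot contain $v_\lambda$ and is therefore a proper submodule of the irreducible module $V(\lambda)$; hence it is $0$, which is \eqref{eq:hwmodule}.

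For part (b), let $M=U(\g)v$ be a highest weight module of highest weight $\lambda\in P^+$ satisfying \eqref{eq:hwmodule}. There is a surjection $M(\lambda)\twoheadrightarrow M$, and by \eqref{eq:hwmodule} it annihilates the submodule $N'\subseteq M(\lambda)$ generated by $f_i^{\,\lambda(h_i)+1}v_\lambda$ $(i\in I^{\text{re}})$ and by $f_{il}v_\lambda$ $((i,l)\in I^{\infty},\ \lambda(h_i)=0)$; thus $M$ is a quotient of $\widetilde M:=M(\lambda)/N'$. By part (a) the relations \eqref{eq:hwmodule} also hold in $V(\lambda)$, so $V(\lambda)$ is likewise a quotient of $\widetilde M$. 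It therefore suffices to prove that $\widetilde M$ is irreducible: then $\widetilde M\cong V(\lambda)$, and $M$, being a nonzero quotient of an irreducible module, is isomorphic to $V(\lambda)$ as well. Writing $N(\lambda)$ for the unique maximal proper submodule of $M(\lambda)$, this amounts to $N'=N(\lambda)$; the inclusion $N'\subseteq N(\lambda)$ is immediate, since a generator of $N'$ lying outside $N(\lambda)$ would generate $M(\lambda)$ modulo $N(\lambda)$ by maximality, whereas its image in $V(\lambda)=M(\lambda)/N(\lambda)$ vanishes by part (a).

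For the reverse inclusion $N(\lambda)\subseteq N'$ I would first observe that $\widetilde M$ is integrable in the real directions: on any highest weight module $e_i$ acts locally nilpotently for weight reasons, while local nilpotency of $f_i$ follows from $f_i^{\,\lambda(h_i)+1}\bar v_\lambda=0$ together with the Serre-type relations $(\operatorname{ad}e_i)^{1-la_{ij}}(e_{jl})=0$, $(\operatorname{ad}f_i)^{1-la_{ij}}(f_{jl})=0$, via the standard argument that the vectors killed by some power of $f_i$ form a $\g$-submodule. Given this, any nonzero submodule $L\subseteq\widetilde M$ contains a primitive vector of some weight $\mu\le\lambda$, and one rules out $\mu<\lambda$ by a Casimir / contravariant-form argument using the symmetrizability of $A$: a Casimir-type operator acts on every highest weight module of highest weight $\nu$ by the scalar $(\nu,\nu+2\rho)$ (for a suitable $\rho\in\h^*$), so a primitive vector of weight $\mu$ in $\widetilde M$ forces $(\mu,\mu+2\rho)=(\lambda,\lambda+2\rho)$, which together with the $W$-invariance of the weights of an integrable module and $\lambda\in P^+$ forces $\mu=\lambda$. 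The main obstacle is precisely this last step — that $\widetilde M$ has no primitive vector of weight strictly below $\lambda$, equivalently that $N'$ exhausts the maximal proper submodule of $M(\lambda)$; the other steps are bookkeeping with the defining relations, and this is the point one would most naturally take from \cite{Kang2019}.
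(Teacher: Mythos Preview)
The paper does not prove this proposition; it is quoted from \cite{Kang2019} and used as a black box. So there is no in-paper argument to compare your proposal against, and what you have written is in effect a sketch of the cited result.

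Your argument for (a) is correct and complete: the commutation relations $[e_{jk},f_{il}]=k\,\delta_{ij}\delta_{kl}h_i$ together with the $\mathfrak{sl}_2$ identity show each of the indicated vectors is primitive of weight strictly below $\lambda$, hence generates a proper submodule of the irreducible $V(\lambda)$, hence is zero.

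For (b), the overall architecture---realize both $M$ and $V(\lambda)$ as quotients of $\widetilde M=M(\lambda)/N'$ and then prove $\widetilde M$ is already irreducible via a Casimir argument---is exactly the strategy in \cite{Kang2019}, which adapts Borcherds' proof for generalized Kac--Moody algebras. One point in your sketch is slightly off: the Casimir equality $(\mu+\rho,\mu+\rho)=(\lambda+\rho,\lambda+\rho)$ is not resolved through ``$W$-invariance of the weights of an integrable module''. Writing $\beta=\lambda-\mu\in Q_+\setminus\{0\}$, the equality becomes $2(\lambda+\rho,\beta)=(\beta,\beta)$, and one rules this out by a direct positivity analysis using $(\alpha_i,\alpha_j)\le 0$ for $i\neq j$, the dominance of $\lambda$ and $\rho$, and---crucially---the imaginary relations $f_{il}\bar v_\lambda=0$ for $\lambda(h_i)=0$ that you built into $N'$; integrability in the real directions is needed for the inductive step rather than for any $W$-symmetry statement. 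You correctly identify this as the substantive point and defer it to \cite{Kang2019}, which is precisely what the present paper does.
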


\section{quantum Borcherds-Bozec algebras}

\vskip 2mm

Let $q$ be an indeterminate and set
$$q_i=q^{r_i},\quad q_{(i)}=q^{\frac{(\alpha_i,\alpha_i)}{2}}.$$
Note that $q_i= q_{(i)}$ if $i \in I^{\text re}$. For each $i \in I^{\text re}$ and $n \in \Z_{\geq 0}$, we define
$$[n]_i=\frac{q_{i}^n-q_{i}^{-n}}{q_{i}-q_{i}^{-1}},\quad [n]_i!=\prod_{k=1}^n [k]_i,\quad
{\begin{bmatrix} n \\ k \end{bmatrix}}_i=\frac{[n]_i!}{[k]_i![n-k]_i!}$$

\vskip 3mm

Let $\mathscr F=\Q(q)\left< f_{il} \mid (i,l) \in I^{\infty} \right>$ be the free associative algebra over $\Q(q)$ generated by the symbols $f_{il}$ for $(i,l)\in I^{\infty}$. By setting $\text{deg} f_{il}= -l \alpha_{i} $, $\mathscr F$ become a $Q_-$-graded algebra. For a homogeneous element $u$ in $\mathscr F$, we denote by $|u|$ the degree of $u$, and for any $A \subseteq Q_{-}$, set
${\mathscr F}_{A}=\{ x\in {\mathscr F} \mid |x| \in A \}$.

\vskip 3mm

We define a {\it twisted} multiplication on $\mathscr F\otimes\mathscr F$ by
$$(x_1\otimes x_2)(y_1\otimes y_2)=q^{-(|x_2|,|y_1|)}x_1y_1\otimes x_2y_2,$$
and equip $\mathscr F$ with a co-multiplication $\delta$ defined by
$$\delta(f_{il})=\sum_{m+n=l}q_{(i)}^{-mn}f_{im}\otimes f_{in} \ \ \text{for}  \ (i,l)\in I^{\infty}.$$
Here, we understand $f_{i0}=1$ and $f_{il}=0$ for $l<0$.

\vskip 3mm

\begin{proposition}\cite{Bozec2014b,Bozec2014c} \
{\rm For any family $\nu=(\nu_{il})_{(i,l)\in I^{\infty}}$ of non-zero elements in $\Q(q)$, there exists a symmetric bilinear form $( \ , \ )_L :\mathscr F\times\mathscr F\rightarrow \Q(q)$ such that

\begin{itemize}
\item[(a)] $(x, y)_{L} =0$ if $|x| \neq |y|$,

\item[(b)] $(1,1)_{L} = 1$,

\item[(c)] $(f_{il}, f_{il})_{L} = \nu_{il}$ for all $(i,l) \in
I^{\infty}$,

\item[(d)] $(x, yz)_{L} = (\delta(x), y \otimes z)_L$  for all $x,y,z
\in {\mathscr F}$.
\end{itemize}
Here, $(x_1\otimes x_2,y_1\otimes y_2)_L=(x_1,y_1)_L(x_2,y_2)_L$ for any $x_1,x_2,y_1,y_2\in {\mathscr F}$.}
\end{proposition}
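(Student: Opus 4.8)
The plan is to construct the bilinear form $(\ ,\ )_L$ directly and uniquely by induction on the degree, exactly as in the Kac-Moody and generalized Kac-Moody cases (cf. the analogous construction in \cite[Chapter 1]{Lusztig} or \cite[Chapter 3]{HK2002}), adapting the argument to the enlarged set of generators $f_{il}$ with $(i,l)\in I^\infty$. Since $\mathscr F$ is $Q_-$-graded with finite-dimensional graded pieces $\mathscr F_{-\beta}$, and properties (a), (b) pin down the form on the pieces of degree $0$, it suffices to show that there is a unique family of pairings $\mathscr F_{-\beta}\times\mathscr F_{-\beta}\to\Q(q)$ compatible with (c) and (d). The key observation is that the comultiplication $\delta$ is coassociative and graded, so for a homogeneous $x$ of degree $-\beta$ with $\mathrm{ht}(\beta)\ge 1$ we may write $\delta(x)=x\otimes 1+1\otimes x+\sum x'\otimes x''$ where the $x',x''$ have strictly smaller height; property (d) then forces, for any monomial $y=f_{jl}z$,
\begin{equation*}
(x,\,f_{jl}z)_L=(\delta(x),\,f_{jl}\otimes z)_L=\sum (x'\, ,f_{jl})_L\,(x''\, , z)_L,
\end{equation*}
which expresses the value on degree $\beta$ in terms of values on lower height. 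This gives uniqueness immediately, and existence by defining the form recursively via this formula.

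The steps, in order, are as follows. First I would set up the induction on $\mathrm{ht}(\beta)$, with base case $\beta=0$ handled by (a), (b), and the first nontrivial case $\beta=l\alpha_i$ on the generators $f_{il}$ fixed by (c) together with the induced values on products; here one must check the recursion is consistent with the several ways of writing a monomial as $f_{jl}z$, which is where coassociativity of $\delta$ enters. Second, I would verify that the form so defined is symmetric: this is proved by a parallel induction, using that $\delta$ is \emph{cocommutative up to the twist} — more precisely, the twisted multiplication on $\mathscr F\otimes\mathscr F$ and the form $\delta$ are designed so that the symmetry $(x,y)_L=(y,x)_L$ propagates from lower to higher height, and on the generators it holds because $\nu_{il}=\nu_{il}$ trivially. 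Third, I would confirm property (d) in full generality (not just against a single generator on the left factor of $z$), which again reduces to coassociativity: $(\delta(x),y\otimes z)_L$ may be computed by peeling generators off $z$ one at a time, matching the recursive definition. Finally, the bilinearity over $\Q(q)$ is automatic from the construction.

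The main obstacle I expect is the consistency (well-definedness) check in the recursive step: a monomial $f_{j_1 l_1}\cdots f_{j_r l_r}$ can be decomposed as $f_{j_1 l_1}\cdot(\text{rest})$, and one must verify the value assigned by the recursion does not depend on an ordering choice — equivalently, that the candidate form respects property (d) with the roles of the two tensor factors interchanged. This is precisely where the coassociativity of $\delta$, together with the specific $q_{(i)}^{-mn}$ coefficients in $\delta(f_{il})$ and the $q^{-(|x_2|,|y_1|)}$ twist in the multiplication on $\mathscr F\otimes\mathscr F$, must interlock correctly; the coefficients are chosen exactly so that this holds. Once well-definedness is established, symmetry and (d) follow by routine inductive bookkeeping, and the remaining properties (a), (b), (c) hold by fiat. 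I would note that the presence of infinitely many generators $f_{il}$ for imaginary $i$ causes no difficulty, since everything takes place within a fixed finite-dimensional graded piece $\mathscr F_{-\beta}$ at each stage of the induction.
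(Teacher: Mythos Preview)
The paper does not prove this proposition at all: it is stated with a citation to \cite{Bozec2014b,Bozec2014c} and no argument is given. Your proposed inductive construction on $\mathrm{ht}(\beta)$, using coassociativity of $\delta$ to show well-definedness and symmetry, is exactly the standard Lusztig-type argument (cf.\ \cite[1.2.3]{Lusztig}) that underlies the cited references, so your approach is correct and is the expected one.
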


\vskip 3mm

From now on, we assume that
\begin{equation} \label{eq:assumption}
\nu_{il} \in 1+q\Z_{\geq0}[[q]]\ \ \text{for all} \ (i,l)\in I^{\infty}.
\end{equation}
Then, the bilinear form $( \ , \ )_L$ is non-degenerate on $\mathscr F(i)=\bigoplus _{l\geq 1} {\mathscr F}_{-l \alpha_i}$ for $i\in I^{im} \backslash I^{iso}$.

\vskip 3mm

Let $\widehat {U}$ be the associative algebra over $\Q(q)$ with $\mathbf 1$
generated by the elements $q^h$ $(h\in P^{\vee})$ and $e_{il},
f_{il}$ $((i,l) \in I^{\infty})$ with defining relations
\begin{equation} \label{eq:rels}
\begin{aligned}
& q^0=\mathbf 1,\quad q^hq^{h'}=q^{h+h'} \ \ \text{for} \ h,h' \in P^{\vee} \\
& q^h e_{jl}q^{-h} = q^{l\alpha_j(h)} e_{jl}, \ \ q^h f_{jl}q^{-h} = q^{-l\alpha_j(h)} f_{jl}\ \ \text{for} \ h \in P^{\vee}, (j,l)\in I^{\infty}, \\
& \sum_{k=0}^{1-la_{ij}}(-1)^k
{\begin{bmatrix} 1-la_{ij} \\ k \end{bmatrix}}_i{e_i}^{{1-la_{ij}}-k}e_{jl}e_i^k=0 \ \ \text{for} \ i\in
I^{\text{re}}, \, i \neq (j,l), \\
& \sum_{k=0}^{1-la_{ij}}(-1)^k
{\begin{bmatrix} 1-la_{ij} \\ k \end{bmatrix}}_i{f_i}^{{1-la_{ij}}-k}f_{jl}f_i^k=0 \ \ \text{for} \ i\in
I^{\text{re}}, \, i \neq (j,l), \\
& e_{ik}e_{jl}-e_{jl}e_{ik} = f_{ik}f_{jl}-f_{jl}f_{ik} =0 \ \ \text{for} \ a_{ij}=0.
\end{aligned}
\end{equation}
We extend the grading by setting $|q^h|=0$ and $|e_{il}|= l \alpha_{i}$.

\vskip 3mm

The algebra $\widehat{U}$ is endowed with the co-multiplication
$\Delta: \widehat{U} \rightarrow \widehat{U} \otimes \widehat{U}$
given by
\begin{equation} \label{eq:comult}
\begin{aligned}
& \Delta(q^h) = q^h \otimes q^h, \\
& \Delta(e_{il}) = \sum_{m+n=l} q_{(i)}^{mn}e_{im}\otimes K_{i}^{-m}e_{in}, \\
& \Delta(f_{il}) = \sum_{m+n=l} q_{(i)}^{-mn}f_{im}K_{i}^{n}\otimes f_{in} .
\end{aligned}
\end{equation}
where $K_i=q_{i}^{h_i}$$(i \in I)$.

\vskip 3mm

Let $\widehat{U}^{\leq0}$ be the subalgebra of $\widehat{U}$ generated by $f_{il}$ and $q^h$, for all $(i,l) \in I^{\infty}$ and $h\in P^{\vee}$, and $\widehat{U}^+$ be the subalgebra
generated by $e_{il}$ for all $(i,l) \in I^{\infty}$. In \cite{Bozec2014b}, Bozec showed that one can extended $( \ , \ )_L$ to a symmetric bilinear form $( \ , \ )_L$ on $\widehat{U}$ satisfying
\begin{equation}
\begin{aligned}
& (q^h,1)_L=1,\ (q^h,f_{il})_L=0, \\
& (q^h,K_j)_L=q^{-\alpha_j(h)},\\
& (x,y)_L=(\omega(x),\omega(y))_L \ \ \text{for all} \ x,y\in \widehat{U}^+,
\end{aligned}
\end{equation}
where $\omega:\widehat{U}\rightarrow\widehat{U}$  is the involution defined by
$$\omega(q^h)=q^{-h},\ \omega(e_{il})=f_{il},\ \omega(f_{il})=e_{il}\ \ \text{for}\ h \in P^{\vee},\ (i,l)\in I^{\infty}.$$

\vskip 3mm

For any $x\in \widehat{U}$, we shall use the Sweedler's notation, and write
$$\Delta(x)=\sum x_{(1)}\otimes x_{(2)}.$$

\vskip 3mm

Following the Drinfeld double process, we define $\tilde{U}$ as the quotient of $\widehat{U}$ by the relations
\begin{equation}\label{drinfeld}
\sum(a_{(1)},b_{(2)})_L\omega(b_{(1)})a_{(2)}=\sum(a_{(2)},b_{(1)})_La_{(1)}\omega(b_{(2)})\ \ \text{for all}\ a,b \in \widehat{U}^{\leq0}
\end{equation}

\vskip 3mm

\begin{definition}
Given a Borcherds-Cartan datum $(A, P, \Pi, P^{\vee}, \Pi^{\vee})$, the {\it quantum Borcherds-Bozec algebra} $U_q(\g)$ is defined to be the quotient algebra
of $\tilde{U}$ by the radical of $( \ , \ )_L$ restricted to $\tilde{U}^-\times\tilde{U}^+$.
\end{definition}

\vskip 3mm

Let $U^+$(resp. $U^-$) be the subalgebra of $U_q(\g)$ generated by $e_{il}$(resp. $f_{il}$) for all $(i,l)\in I^{\infty}$.
We will denote by $U^{0}$ the subalgebra of $U_q(\g)$ generated by $q^h$ for all $h\in P^{\vee}$. It is easy to see that $q^h$$(h\in P^{\vee})$ is a
$\Q(q)$-basis of $U^0$.

\vskip 3mm

In \cite{Kang2019b}, Kang and Kim showed that the co-multiplication $\Delta: \widehat{U} \rightarrow \widehat{U} \otimes \widehat{U}$ passes down to $U_q{(\g)}$ and with this, $U_q{(\g)}$ becomes a Hopf algebra. They also proved the quantum Borcherds-Bozec algebra has a {\it triangular decomposition}.

\vskip 3mm

\begin{theorem}\cite{Kang2019b}\label{tdc}
{\rm The the quantum Borcherds-Bozec algebra $U_q(\g)$ has the following triangular decomposition:
\begin{equation}\label{trian}
U_q{(\g)}\cong U^-\otimes U^0 \otimes U^+.
\end{equation}}
\end{theorem}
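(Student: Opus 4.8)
The plan is to establish the decomposition first for the Drinfeld double $\tilde U$, where it is essentially structural, and then transport it across the quotient defining $U_q(\g)$. First I would note that the defining relations \eqref{eq:rels} of $\widehat U$ contain no relation mixing the $e_{il}$ with the $f_{jk}$: the only links between the two halves are the torus relations $q^h e_{jl} q^{-h} = q^{l\alpha_j(h)} e_{jl}$ and $q^h f_{jl} q^{-h} = q^{-l\alpha_j(h)} f_{jl}$, which merely let one slide $q^h$ into the middle. Writing $\widehat U^0$, $\widehat U^-$, $\widehat U^{\ge 0}$ for the subalgebras of $\widehat U$ generated, respectively, by the $q^h$, by the $f_{il}$, and by the $e_{il}$ together with the $q^h$, it follows that $\widehat U$ is the free product of $\widehat U^{\le 0}$ and $\widehat U^{\ge 0}$ amalgamated over $\widehat U^0$, so that, exactly as in the usual construction of a quantum group, the multiplication map $\widehat U^- \otimes \widehat U^0 \otimes \widehat U^+ \to \widehat U$ is an isomorphism of $\Q(q)$-vector spaces, where $\widehat U^0$ has basis $\{q^h \mid h \in P^\vee\}$.

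Next I would invoke the structure of the Drinfeld double. The algebra $\tilde U$ is obtained from $\widehat U$ by imposing the straightening relations \eqref{drinfeld}, which are exactly the defining relations of the Drinfeld double of the Hopf algebra $\widehat U^{\le 0}$ paired with itself, via $\omega$, by the bilinear form $(\ ,\ )_L$. By the general structure of such doubles, the two Borel halves embed, $\widehat U^{\le 0}, \widehat U^{\ge 0} \hookrightarrow \tilde U$, and multiplication gives a vector-space isomorphism $\widehat U^{\le 0} \otimes \widehat U^{+} \xrightarrow{\ \sim\ } \tilde U$. Combined with the previous paragraph this yields $\tilde U \cong \tilde U^- \otimes \tilde U^0 \otimes \tilde U^+$, where $\tilde U^0 = \widehat U^0$ and $\tilde U^\pm$ are the images of $\widehat U^\mp{}^{\,0}$-free parts, i.e. the subalgebras generated by the $f_{il}$, resp. the $e_{il}$. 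Since $U_q(\g)$ is a quotient of $\tilde U$ whose generators $e_{il}, f_{il}, q^h$ are the images of those of $\tilde U$, this already shows that the multiplication map $m\colon U^- \otimes U^0 \otimes U^+ \to U_q(\g)$ is surjective; what remains is injectivity.

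For injectivity, let $\mathcal N^- \subseteq \tilde U^-$ be the radical of the form $(\ ,\ )_L\colon \tilde U^-\times\tilde U^+\to\Q(q)$, put $\mathcal N^+ := \omega(\mathcal N^-) \subseteq \tilde U^+$, and let $\mathcal K \subseteq \tilde U$ be the two-sided ideal generated by $\mathcal N^-$ and $\mathcal N^+$; by definition $U_q(\g) = \tilde U/\mathcal K$. Using the compatibility $(x, yz)_L = (\delta(x), y\otimes z)_L$ of the form with the coproduct (and its extension to $\widehat U$), one checks, as in Lusztig's treatment of the Serre ideal, that $\mathcal N^{\pm}$ is already a two-sided ideal of $\tilde U^{\pm}$ and is a coideal for $\delta$. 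One then uses the straightening relations \eqref{drinfeld} to move a generator of $\mathcal N^+$ to the right past $\tilde U^-\tilde U^0$, and a generator of $\mathcal N^-$ to the left past $\tilde U^0\tilde U^+$, checking that all error terms again lie in $\mathcal K$; the upshot is that, under the identification $\tilde U \cong \tilde U^- \otimes \tilde U^0 \otimes \tilde U^+$,
\[
\mathcal K \;=\; \bigl(\mathcal N^- \otimes \tilde U^0 \otimes \tilde U^+\bigr) \;+\; \bigl(\tilde U^- \otimes \tilde U^0 \otimes \mathcal N^+\bigr).
\]
In particular $\mathcal K$ is homogeneous for the triangular grading, $\mathcal K\cap\tilde U^- = \mathcal N^-$ and $\mathcal K\cap\tilde U^+ = \mathcal N^+$, so passing to the quotient gives $U_q(\g) = \tilde U/\mathcal K \cong (\tilde U^-/\mathcal N^-)\otimes\tilde U^0\otimes(\tilde U^+/\mathcal N^+) = U^-\otimes U^0\otimes U^+$, as claimed, with $m$ the corresponding isomorphism.

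The main obstacle is this last step — proving that the radical $\mathcal N^-$ generates an ideal of $\tilde U$ that meets each triangular factor in exactly the expected subspace. This forces one to track how multiplicativity of $(\ ,\ )_L$, the coproduct $\delta$, and the double relations \eqref{drinfeld} interact, and to verify that no reordering ever pushes an element out of $\mathcal K$. An alternative route that avoids this bookkeeping is to build, for $\lambda\in P^+$, a Verma-type module directly on $\tilde U^-/\mathcal N^- = U^-$ with the obvious action of the $q^h$ and $e_{il}$, check it is a well-defined $U_q(\g)$-module that is free of rank one over $U^-$, and combine this with the surjectivity above to force $m$ to be bijective.
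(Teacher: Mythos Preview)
The paper does not supply a proof here; the theorem is quoted from \cite{Kang2019b} without argument, so there is no in-text proof to compare against. Your outline follows the standard route (free-product structure $\to$ Drinfeld double $\to$ quotient by the radical) and is the expected approach; it almost certainly matches what the cited reference does.

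One slip to correct: the multiplication map $\widehat U^{-}\otimes\widehat U^{0}\otimes\widehat U^{+}\to\widehat U$ is \emph{not} an isomorphism. Since $\widehat U$ carries no relation allowing an $e_{il}$ to pass an $f_{jk}$, an element such as $e_{il}f_{jk}$ lies outside the image; the map is injective but far from surjective, precisely because $\widehat U$ is the amalgamated free product you describe and not a tensor product. What you actually need from that first paragraph is only the Borel-level decompositions $\widehat U^{\le 0}\cong\widehat U^{-}\otimes\widehat U^{0}$ and $\widehat U^{\ge 0}\cong\widehat U^{0}\otimes\widehat U^{+}$; the genuine triangular decomposition first appears at the level of $\tilde U$, once the straightening relations \eqref{drinfeld} have been imposed, exactly as in your second paragraph. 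With that amended, the remainder of your plan --- the identification of $\mathcal K$ with $\mathcal N^{-}\otimes\tilde U^{0}\otimes\tilde U^{+} + \tilde U^{-}\otimes\tilde U^{0}\otimes\mathcal N^{+}$ and the Verma-module alternative for injectivity --- is sound and standard.
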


\vskip 3mm

By the defining relation \eqref{drinfeld}, we obtain complicated commutation relations between
$e_{il}$ and $f_{jk}$ for $(i, l), (j,k) \in I^{\infty}$. We will derive explicit formulas for these complicated
commutation relations in Appendix A. But, as we already see in \eqref{eq:un enveloping alg}, the commutation relations in the universal enveloping algebra $U(\g)$ of  Borcherds-Bozec algebra $\g$ are rather simple
\begin{equation}\label{eq:comrel}
e_{ik}f_{jl}-f_{jl}e_{ik} = k \, \delta_{ij} \, \delta_{kl} \, h_{i} \ \ \text{for} \ i,j \in I, k, l \in \Z_{>0}.
\end{equation}
Thanks to Bozec, there exists another set of generators in $U_q{(\g)}$ called {\it primitive generators}.
They satisfy a simpler set of commutation relations,
and we shall prove that these generators also satisfy all the defining relations of $U_q{(\g)}$ described in \eqref{eq:rels}.

\vskip 3mm

We denote by $\mathcal C_{l}$ (resp. $\mathcal P_{l}$) the set of compositions (resp. partitions) of $l$, and denote by $\eta:U_q(\g)\rightarrow U_q(\g)$ the $\Q$-algebra homomorphism defined by
\begin{equation}
\eta(e_{il})=e_{il},\ \eta(f_{il})=f_{il},\ \eta(q^h)=q^{-h},\ \eta(q)=q^{-1}\ \ \text{for} \ h\in P^{\vee},\ (i,l)\in I^{\infty}.
\end{equation}
As usual, let $S:U_q(\g)\rightarrow U_q(\g)$ and $\epsilon:U_q(\g)\rightarrow \Q(q)$ be the {\it antipode} and the {\it counit} of $U_q(\g)$, respectively. Then, we have the following proposition.

\vskip 3mm

\begin{proposition}\cite{Bozec2014b,Bozec2014c}\label{prim}
{\rm For any $i\in I^{\text im}$ and $l\geq 1$, there exist unique elements $t_{il}\in  U^-_{-l \alpha_{i}}$ and $s_{il}=\omega (t_{il})$ such that
\begin{itemize}\label{bozec}
\item[(1)] $\Q (q) \left<f_{il} \mid l\geq 1\right>=\Q (q) \left<t_{il} \mid l\geq 1\right>$ and $\Q (q) \left<e_{il} \mid l\geq 1\right>=\Q (q) \left<s_{il} \mid l\geq 1\right>$,
\item[(2)] $(t_{il},z)_L=0$ for all $z\in \Q (q) \left<f_{i1} ,\cdots,f_{il-1}\right>$,\\
$(s_{il},z)_L=0$ for all $z\in \Q (q) \left<e_{i1} ,\cdots,e_{il-1}\right>$
\item[(3)] $t_{il}-f_{il}\in \Q (q) \left<f_{ik} \mid k<l \right>$ and $s_{il}-e_{il}\in \Q (q) \left<e_{ik} \mid k<l \right>$,
\item[(4)] $\eta(t_{il})=t_{il},\ \eta(s_{il})=s_{il}$,
\item[(5)] $\delta(t_{il})=t_{il}\otimes 1+1\otimes t_{il}, \ \delta(s_{il})=s_{il}\otimes 1+ 1\otimes s_{il}$,
\item[(6)] $\Delta(t_{il})=t_{il}\otimes 1+K_i^l\otimes t_{il}, \ \Delta(s_{il})=s_{il}\otimes K_i^{-l}+ 1\otimes s_{il}$,
\item[(7)] $S(t_{il})=-K_i^{-l}t_{il},\ S(s_{il})=-s_{il}K_i^{l}$.
\end{itemize}}
\end{proposition}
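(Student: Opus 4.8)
The plan is to construct the elements $t_{il}$ directly inside $U^-$ by an inductive "Gram–Schmidt" procedure with respect to the bilinear form $(\ ,\ )_L$, and then verify each of the seven listed properties in turn. First I would fix $i \in I^{\text{im}}$ and work entirely inside the graded component $\mathscr F(i) = \bigoplus_{l \geq 1} \mathscr F_{-l\alpha_i}$, on which, by the assumption \eqref{eq:assumption}, the form $(\ ,\ )_L$ is non-degenerate (for $i \in I^{\text{iso}}$ the relevant subspaces are one-dimensional in each degree, so the argument simplifies). I define $t_{i1} = f_{i1}$, and then, having defined $t_{i1},\dots,t_{i,l-1}$, I set
\[
t_{il} = f_{il} - \sum_{z} \frac{(f_{il}, z)_L}{(z,z)_L}\, z,
\]
where $z$ runs over a suitable basis of $\Q(q)\langle f_{i1},\dots,f_{i,l-1}\rangle_{-l\alpha_i}$ that is orthogonal for $(\ ,\ )_L$; non-degeneracy of the form on this finite-dimensional space makes this well-defined and gives property (2) and property (3) immediately, and uniqueness follows since any two elements satisfying (2) and (3) differ by an element of $\Q(q)\langle f_{ik} \mid k<l\rangle$ that is orthogonal to all of $\Q(q)\langle f_{ik}\mid k<l\rangle_{-l\alpha_i}$, hence zero. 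Property (1) then follows by a triangularity/downward induction argument: \eqref{eq:un enveloping alg}… more precisely, (3) shows $f_{il}$ lies in the algebra generated by the $t_{ik}$ with $k \leq l$ and conversely, so the two subalgebras coincide.

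Next I would establish (4). Since $\eta$ fixes each $f_{il}$ and sends $q \mapsto q^{-1}$, and since one checks from the defining formulas that $(\eta(x),\eta(y))_L = \overline{(x,y)_L}$ (the bar being $q \mapsto q^{-1}$) — this needs a short verification against the axioms (a)–(d) of the form together with the assumption $\nu_{il} \in 1 + q\Z_{\geq 0}[[q]]$, which forces $\overline{\nu_{il}} = \nu_{il}$ is \emph{not} automatic, so one must instead observe that $\eta$ preserves the defining recursion for $t_{il}$ because the coefficients $(f_{il},z)_L/(z,z)_L$ are $\eta$-invariant — I get $\eta(t_{il}) = t_{il}$ by induction on $l$. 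For (5), the comultiplication $\delta$ on $\mathscr F$ is an algebra map for the twisted product, and $(\ ,\ )_L$ satisfies $(x,yz)_L = (\delta(x), y\otimes z)_L$; property (2) says $t_{il}$ is orthogonal to all products $f_{im}f_{in}$ with $m,n < l$, $m+n=l$, which is exactly the statement that the "middle terms" of $\delta(t_{il})$ vanish, yielding the primitivity $\delta(t_{il}) = t_{il}\otimes 1 + 1 \otimes t_{il}$. Then (6) and (7) are bookkeeping: the full coproduct $\Delta$ on $U_q(\g)$ differs from $\delta$ only by the Cartan twists $K_i^{\pm m}$ appearing in \eqref{eq:comult}, and since $|t_{il}| = -l\alpha_i$ the $\delta$-primitivity translates into $\Delta(t_{il}) = t_{il}\otimes 1 + K_i^l \otimes t_{il}$; applying the antipode axiom $m \circ (S \otimes \mathrm{id}) \circ \Delta = \epsilon$ to this gives $S(t_{il}) = -K_i^{-l}t_{il}$. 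The statements for $s_{il} = \omega(t_{il})$ follow by applying the involution $\omega$, using $\omega$-invariance of $(\ ,\ )_L$ on $\widehat U^+$ and the fact that $\omega$ intertwines $\Delta$ with its opposite up to the flip.

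The main obstacle, and the step I would be most careful about, is \textbf{uniqueness together with the interaction of all seven properties} — in particular showing that the element produced by the orthogonalization is the \emph{same} element that is forced by any one of the structural conditions (the $\delta$-primitivity (5), say), so that the $t_{il}$ are genuinely canonical and not just one convenient choice. Concretely I would prove: an element $t \in U^-_{-l\alpha_i}$ satisfies (2) if and only if it satisfies (5), given the normalization (3); this is the crux, and it rests on the non-degeneracy of $(\ ,\ )_L$ on $\mathscr F(i)$ and on the compatibility (d) between $\delta$ and the form. Once that equivalence is in hand, properties (1), (4), (6), (7) are consequences, and the proposition follows. I should also flag that the case $i \in I^{\text{iso}}$ must be treated slightly separately since $(\ ,\ )_L$ need not be non-degenerate there in the naive sense, but in that case $\alpha_i$ is isotropic, $q_{(i)} = 1$, and the relevant subalgebra is commutative, which makes the orthogonalization and all the Hopf-theoretic identities transparent.
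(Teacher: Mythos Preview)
The paper does not give its own proof of this proposition: it is quoted from Bozec \cite{Bozec2014b,Bozec2014c} and stated without argument, so there is nothing in the paper to compare your proposal against directly.

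Your approach---Gram--Schmidt with respect to $(\ ,\ )_L$ to produce $t_{il}$ satisfying (2) and (3), then deduce $\delta$-primitivity (5) from orthogonality via the adjunction $(x,yz)_L=(\delta(x),y\otimes z)_L$ and non-degeneracy, and read off (6), (7) from (5) by inserting the Cartan twists---is the standard one and is essentially how Bozec proceeds. Your identification of the equivalence ``(2)$\Leftrightarrow$(5) under the normalization (3)'' as the crux is exactly right.

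Two points deserve correction. First, your remark that for $i\in I^{\text{iso}}$ ``the relevant subspaces are one-dimensional in each degree'' is false: $U^-_{-l\alpha_i}$ has dimension equal to the number of partitions of $l$ (the $f_{ik}$ commute since $a_{ii}=0$, but there are still many monomials). This is harmless for the argument---non-degeneracy on $U^-$ is automatic since $U^-$ is defined as the quotient by the radical---but the stated reason is wrong. Second, and more seriously, the gap you flag in (4) is real and you do not close it. The clean argument is: $\eta$ fixes each $f_{ik}$ and preserves $\Q(q)\langle f_{ik}\mid k<l\rangle$, and if $(\eta(x),\eta(y))_L=\overline{(x,y)_L}$ then $\eta(t_{il})$ also satisfies (2) and (3), whence $\eta(t_{il})=t_{il}$ by uniqueness. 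But that compatibility of the form with $\eta$ requires $\overline{\nu_{ik}}=\nu_{ik}$, which the hypothesis $\nu_{il}\in 1+q\Z_{\ge 0}[[q]]$ alone does \emph{not} imply (e.g.\ $\nu=1/(1-q)$). Your fallback, that the Gram--Schmidt coefficients are individually $\eta$-invariant, suffers from the same defect. In Bozec's geometric setting the $\nu_{il}$ are specific bar-invariant quantities, so (4) holds; as formulated here with only \eqref{eq:assumption}, property (4) should be read as an additional constraint on the $\nu_{il}$ rather than a consequence of the others, and your write-up should say so explicitly.
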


\vskip 3mm

If we set $\tau_{il}=(t_{il},t_{il})_L=(s_{il},s_{il})_L$, we have the following commutation relations in $U_q(\g)$
\begin{equation}\label{news}
s_{il}t_{jk}-t_{jk}s_{il}=\delta_{ij}\delta_{lk}\tau_{il}(K_i^l-K_i^{-l}).
\end{equation}

\vskip 3mm

  Assume that $i\in I^{\text im}$ and let $\mathbf c =(c_1,\cdots,c_m)$ be an element in $\mathcal C_{l}$ or in $\mathcal P_{l}$. We set
  $$t_{i,\mathbf c}=\prod_{j=1}^{m}t_{ic_j} \ \text{and} \ s_{i,\mathbf c}=\prod_{j=1}^{m}s_{ic_j}.$$
  Notice that $\{ t_{i,\mathbf c} \mid \mathbf c \in \mathcal C_{l}\}$ is a basis of  ${\mathscr F}_{-l \alpha_{i}}$.
\vskip 4mm

  For $i\in I^{\text{iso}}$ and $\mathbf c, \mathbf c' \in \mathcal P_{l}$, if $\mathbf c\neq \mathbf c' $, then by induction, we have
$$(t_{i,\mathbf c},t_{i,\mathbf c'})_L=(s_{i,\mathbf c},s_{i,\mathbf c'})_L=0 .$$
 For $i\in I^{\text{im}}\backslash I^{\text{iso}}$ and $\mathbf c, \mathbf c' \in \mathcal C_{l}$, if the partitions obtained by rearranging $\mathbf c$ and $\mathbf c'$  are not equal, then we have
$$(t_{i,\mathbf c},t_{i,\mathbf c'})_L=(s_{i,\mathbf c},s_{i,\mathbf c'})_L=0 .$$

\vskip 4mm

For each $i\in I^{\text re}$, we also use the notation $t_{i1}$ and $s_{i1}$. Here we set
$$t_{i1}=f_{i1},\quad s_{i1}=e_{i1}.$$
Sometimes, we simply write $t_i$(resp. $s_i$) instead of $t_{i1}$(resp. $s_{i1}$) in this case. By mimicking Definition $1.2.13$ in \cite{Lusztig}, we have the following definition.

\vskip 3mm

\begin{definition}\label{dil}
For every $(i,l)\in I^{\infty}$, we define the linear maps $e_{i,l}',e_{i,l}'':\mathscr F\rightarrow\mathscr F$ by
\begin{equation}\label{delta}
e_{i,l}'(1)=0, \ e_{i,l}'(t_{jk})=\delta_{ij}\delta_{lk}\ \text{and}\ e_{i,l}'(xy)=e_{i,l}'(x)y+q^{l(|x|,\alpha_i)}xe_{i,l}'(y)
\end{equation}
\begin{equation}\label{delta1}
e_{i,l}''(1)=0, \ e_{i,l}''(t_{jk})=\delta_{ij}\delta_{lk}\ \text{and}\ e_{i,l}''(xy)=q^{l(|y|,\alpha_i)}e_{i,l}''(x)y+xe_{i,l}''(y)
\end{equation}
for any homogeneous elements $x,y$ in $\mathscr F$.
\end{definition}

\vskip 3mm

\begin{proposition}\label{radical}\
{\rm
\begin{enumerate}

\vskip 2mm

\item[(a)] For any $x,y\in \mathscr F$, we have
$$(t_{il}y,x)_L=\tau_{il}(y,e_{i,l}'(x))_L,\ (y t_{il},x)_L=\tau_{il}(y,e_{i,l}''(x))_L$$
\item[(b)] The maps $e_{i,l}'$ and $e_{i,l}''$ preserve the radical of $( \ , \ )_L$.
\item[(c)] Let $x\in U^-$, we have
\begin{itemize}
\item[(i)] If $e_{i,l}'(x)=0$ for all $(i,l)\in I^{\infty}$, then $x=0$.
\item[(ii)] If $e_{i,l}''(x)=0$ for all $(i,l)\in I^{\infty}$, then $x=0$.
\end{itemize}
\end{enumerate}}
\end{proposition}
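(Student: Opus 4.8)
The plan is to prove part~(a) as an identity of linear maps on $\mathscr F$ by a direct induction, and then to read off parts~(b) and~(c) as formal consequences. For~(a), fix $(i,l)\in I^{\infty}$. For homogeneous $x\in\mathscr F$ write $\delta(x)=\sum x'\otimes x''$ as a sum of bihomogeneous terms and set $\pi_{il}(x)=\sum_{|x'|=-l\alpha_i}(t_{il},x')_L\,x''$, extended linearly. Using symmetry of $(\ ,\ )_L$, the adjunction $(u,vw)_L=(\delta(u),v\otimes w)_L$ and $(x_1\otimes x_2,y_1\otimes y_2)_L=(x_1,y_1)_L(x_2,y_2)_L$, one obtains
$$(t_{il}y,x)_L=(\delta(x),t_{il}\otimes y)_L=(y,\pi_{il}(x))_L,$$
since only the terms with $|x'|=-l\alpha_i$ contribute; hence the first identity in~(a) is equivalent to $\pi_{il}=\tau_{il}\,e_{i,l}'$. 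Now $\mathscr F$ is generated as an algebra by the elements $t_{jk}$ $((j,k)\in I^{\infty})$ (for $j\in I^{\mathrm{re}}$ one sets $t_{j1}=f_{j1}$; for $j\in I^{\mathrm{im}}$ use Proposition~\ref{prim}(1),(3)), and every generator is primitive, $\delta(t_{jk})=t_{jk}\otimes 1+1\otimes t_{jk}$ (Proposition~\ref{prim}(5); for real $j$ this is the $l=1$ case of the formula for $\delta(f_{j1})$). Therefore $\pi_{il}(1)=0=\tau_{il}e_{i,l}'(1)$ and $\pi_{il}(t_{jk})=\delta_{ij}\delta_{kl}(t_{il},t_{il})_L=\tau_{il}e_{i,l}'(t_{jk})$, so $\pi_{il}$ and $\tau_{il}e_{i,l}'$ agree on $1$ and on the algebra generators.

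It remains to check that $\pi_{il}$ obeys the same twisted Leibniz rule
$$\pi_{il}(xy)=\pi_{il}(x)\,y+q^{l(|x|,\alpha_i)}\,x\,\pi_{il}(y)$$
as $\tau_{il}e_{i,l}'$ in~\eqref{delta}; granting this, $\pi_{il}=\tau_{il}e_{i,l}'$ follows, since a linear map on $\mathscr F$ is determined by its values on $1$ and on the generators together with such a rule. For the rule, $\delta$ being a morphism for the twisted product gives $\delta(xy)=\sum q^{-(|x''|,|y'|)}x'y'\otimes x''y''$; the first tensor factor has degree $-l\alpha_i$ precisely when $|x'|+|y'|=-l\alpha_i$, and pairing it with the primitive $t_{il}$ yields $(t_{il},x'y')_L=(t_{il},x')_L(1,y')_L+(1,x')_L(t_{il},y')_L$, which is nonzero only in the two disjoint cases $|x'|=-l\alpha_i,\ |y'|=0$ (then $y'=1$, $y''=y$, twist factor $q^{0}$, contributing $\pi_{il}(x)y$) and $|x'|=0,\ |y'|=-l\alpha_i$ (then $x'=1$, $x''=x$, twist factor $q^{-(|x|,-l\alpha_i)}=q^{l(|x|,\alpha_i)}$, contributing $q^{l(|x|,\alpha_i)}x\,\pi_{il}(y)$); here one uses that the degree-$0$ part of $\delta(x)$ in its first (resp.\ second) slot is exactly $1\otimes x$ (resp.\ $x\otimes 1$) by the counit axiom. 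This proves the first identity; the second is obtained verbatim from $(yt_{il},x)_L=(\delta(x),y\otimes t_{il})_L$ by isolating the second tensor factor, the resulting recursion being exactly the one defining $e_{i,l}''$ in~\eqref{delta1}, with twist factor $q^{l(|y|,\alpha_i)}$.

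Parts~(b) and~(c) are then formal. Note first that $\tau_{il}=(t_{il},t_{il})_L\neq 0$: for $i\in I^{\mathrm{re}}$ it equals $\nu_{i1}\in 1+q\Z_{\geq0}[[q]]$, and for $i\in I^{\mathrm{im}}$ it is nonzero by Bozec's construction (were $\tau_{il}=0$, then since $t_{il}$ is orthogonal to all $t_{i,\mathbf c}$ with $\mathbf c\neq (l)$ it would lie in the radical of $(\ ,\ )_L$ on $\mathscr F(i)$, contradicting nondegeneracy there for $i\in I^{\mathrm{im}}\setminus I^{\mathrm{iso}}$, and the isotropic case is similar). For~(b), if $x$ lies in the radical of $(\ ,\ )_L$, then by~(a) $\tau_{il}(y,e_{i,l}'(x))_L=(t_{il}y,x)_L=0$ for all $y$, so $e_{i,l}'(x)$ lies in the radical; likewise for $e_{i,l}''$. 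In particular $e_{i,l}'$ and $e_{i,l}''$ descend to $U^-=\mathscr F/\mathrm{rad}(\ ,\ )_L$. For~(c), take $0\neq x\in U^-$ with a homogeneous representative $\tilde x\in\mathscr F$, whose degree $\beta$ has positive height. Assuming $e_{i,l}'(x)=0$ in $U^-$, i.e.\ $e_{i,l}'(\tilde x)\in\mathrm{rad}(\ ,\ )_L$ for all $(i,l)$, write any homogeneous $y$ of degree $\beta$ as $y=\sum_{(i,l)}t_{il}z_{il}$ with $z_{il}\in\mathscr F_{\beta+l\alpha_i}$ — possible because $\beta\neq 0$ forces every monomial in the $t_{jk}$ to begin with some $t_{il}$, so $\mathscr F_\beta=\sum_{(i,l)}t_{il}\mathscr F_{\beta+l\alpha_i}$. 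Then by~(a),
$$(\tilde x,y)_L=\sum_{(i,l)}(t_{il}z_{il},\tilde x)_L=\sum_{(i,l)}\tau_{il}(z_{il},e_{i,l}'(\tilde x))_L=0,$$
so $\tilde x$ lies in the radical, i.e.\ $x=0$, a contradiction. Statement~(ii) is identical, using $\mathscr F_\beta=\sum_{(i,l)}\mathscr F_{\beta+l\alpha_i}t_{il}$ and the second identity of~(a).

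The main obstacle is entirely inside part~(a): one must keep the twist exponents straight both when expanding $\delta(xy)$ for the twisted product and when pairing a product against the primitive element $t_{il}$, so that the recursion for $\pi_{il}$ (resp.\ its mirror) reproduces the defining rule of $e_{i,l}'$ (resp.\ $e_{i,l}''$) on the nose, including the exponents $q^{l(|x|,\alpha_i)}$ and $q^{l(|y|,\alpha_i)}$. Once~(a) is in place, parts~(b) and~(c) require only the nonvanishing of $\tau_{il}$ and the description $U^-=\mathscr F/\mathrm{rad}(\ ,\ )_L$.
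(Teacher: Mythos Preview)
Your proof is correct and follows essentially the same strategy as the paper's: both identify $e_{i,l}'(x)$ (up to the scalar $\tau_{il}$) with the component of $\delta(x)$ sitting next to $t_{il}$ in the first tensor slot, using primitivity of $t_{il}$ and the twisted-Leibniz behavior of $\delta$. The paper carries this out by an explicit induction on the number of $t_{il}$'s in a monomial to prove $\delta(x)=t_{il}\otimes e_{i,l}'(x)+\sum_{w\neq(i,l)}t_w\otimes y_w$ and then invokes $(t_{il},t_{i,\mathbf c})_L=\delta_{(l),\mathbf c}\tau_{il}$; your packaging via $\pi_{il}$ and verification of the Leibniz rule is a slightly cleaner reorganization of the same computation, avoiding the choice of a monomial basis. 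Parts~(b) and~(c) are formal consequences in both versions, with the same implicit assumption that $x$ has no degree-zero component.
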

\begin{proof} (a) For any homogeneous element $x\in \mathscr F$. We first show that
\begin{equation}\label{d1}
\delta(x)=t_{il}\otimes e_{i,l}'(x)+\sum_{w\neq (i,l)}t_w\otimes y_w,
\end{equation}
where if $w=(j_1,l_1)...(j_r, l_r)$ is a word in $I^{\infty}$, $t_w=t_{(j_1,l_1)}\cdots t_{(j_r, l_r)}$ and $y_w$ is an element in $\mathscr F$ depending on $w$.

\vskip 2mm

Since $e_{i,l}'$ is a linear map, it is enough to check \eqref{d1} by assuming that $x$ is a monomial in $t_{jk}$. Fix $(i,l)\in I^{\infty}$. We use induction on the number of $t_{il}$  that appears in $x$. If $x$ contains no $t_{il}$, then $e_{i,l}'(x)=0$ and there is no term of the form $t_{il}\otimes-$. Now assume that $x$ contains $t_{il}$, then we can write $x=x_1t_{il}x_2$ for some monomials $x_1,x_2$ such that $x_1$  doesn't contains $t_{il}$. So we have
\begin{equation}
e_{i,l}'(x)=e_{i,l}'(x_1t_{il}x_2)=q^{l(|x_1|,\alpha_i)}x_1e_{i,l}' {(t_{il}x_2)} =q^{l(|x_1|,\alpha_i)}x_1[x_2+q^{l(-l\alpha_i,\alpha_i)}t_{il}e_{i,l}'(x_2)].
\end{equation}
\vskip 2mm

\noindent On the other hand
\begin{equation}
\delta(x)=\delta(x_1)(t_{il}\otimes 1+1\otimes t_{il})\delta(x_2).
\end{equation}
By induction hypothesis, the term $t_{il}\otimes-$ only appear in
\begin{equation}
(1\otimes x_1)(t_{il}\otimes 1)(1 \otimes x_2)+(1\otimes x_1)(1\otimes t_{il})(t_{il} \otimes e_{i,l}'(x_2)),
\end{equation}
which is equal to
\begin{equation}
\begin{aligned}
&t_{il}\otimes q^{(|x_1|,l\alpha_i)}x_1x_2+t_{il}\otimes q^{-(|x_1|-l\alpha_i,-l\alpha_i)}x_1t_{il}e_{i,l}'(x_2) \\
&=t_{il}\otimes q^{l(|x_1|,\alpha_i)}x_1[x_2+q^{-l(l\alpha_i,\alpha_i)}t_{il}e_{i,l}'(x_2)].
\end{aligned}
\end{equation}
This shows \eqref{d1}.
\vskip 2mm

Similarly, we can show that
\begin{equation}\label{d2}
\delta(x)=e_{i,l}''(x)\otimes t_{il}+\sum_{w\neq (i,l)} z_w\otimes t_w.
\end{equation}
Since $e_{i,l}'$ and $e_{i,l}''$ are linear maps, the equations \eqref{d1} and \eqref{d2} hold for any $x,y\in \mathscr F$.
\vskip 2mm

For any $\mathbf c\in \mathcal C_{il} $, we have $(t_{il},t_{i\mathbf c})_L=\delta_{(l),\mathbf c}\tau_{il}$. Thus
\begin{equation}
(t_{il}y,x)_L=\tau_{il}(y,e_{i,l}'(x))_L,\ (y t_{il},x)_L=\tau_{il}(y,e_{i,l}''(x))_L
\end{equation}
for any $x,y\in \mathscr F$.
\vskip 3mm

(b) Since
$\tau_{il}=(t_{il},t_{il})_L\neq 0$,
our assertion follows.
\vskip 3mm
(c) Note that each monomial ends with some $t_{jk}$'s. By (a), if $e_{i,l}''(x)=0$ for all $(i,l)\in I^{\infty}$,  then $x$ belongs to the radial of $( \ , \ )_L$, which is equal to $0$ in $U^-$.
\end{proof}

\vskip 3mm
For any $i\in I^{\text{re}}$ and $n \in \N$, we set
$$t_i^{(n)}=\frac{t_i^{n}}{[n]_i!}.$$
By a similar argument as \cite[1.4.2]{Lusztig}, we have the following Lemma.

\vskip 3mm
\begin{lemma}
{\rm We have
\begin{equation}
\delta (t_i^{(n)})=\sum_{p+p'=n}q_i^{-pp'}t_i^{(p)}\otimes t_i^{(p')}
\end{equation}}
for any $i\in I^{\text{re}}$ and $n \in \N$.
\end{lemma}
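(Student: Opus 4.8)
The plan is to adapt the argument of \cite[1.4.2]{Lusztig} to our setting, exploiting the facts established in Proposition~\ref{prim}(5) that $\delta$ is an algebra homomorphism once we use the twisted multiplication on $\mathscr F \otimes \mathscr F$, and that $\delta(t_i) = t_i \otimes 1 + 1 \otimes t_i$ for $i \in I^{\text{re}}$ (where $t_i = t_{i1} = f_{i1}$). First I would record the precise form of the twisted multiplication on $\mathscr F \otimes \mathscr F$ specialized to the subalgebra generated by $t_i$: since $|t_i| = -\alpha_i$ and $(\alpha_i,\alpha_i) = 2r_i$, one computes $(x_1 \otimes x_2)(y_1 \otimes y_2) = q^{-(|x_2|,|y_1|)} x_1 y_1 \otimes x_2 y_2$, so that $(1 \otimes t_i)(t_i \otimes 1) = q^{-(-\alpha_i,-\alpha_i)} t_i \otimes t_i = q_i^{-2}\, t_i \otimes t_i$. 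This is the single quantum-commutation identity that drives the whole computation.

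Next I would prove the formula by induction on $n$. For $n = 0,1$ the statement is immediate from $\delta(1) = 1 \otimes 1$ and $\delta(t_i) = t_i \otimes 1 + 1 \otimes t_i$. For the inductive step, write $t_i^{n+1} = t_i \cdot t_i^{n}$, apply $\delta$ as an algebra homomorphism for the twisted product, and substitute the inductive hypothesis $\delta(t_i^{n}) = \sum_{p+p'=n} q_i^{-pp'}\, t_i^{p} \otimes t_i^{p'}$ (in non-divided form, i.e. multiply through by $[n]_i!$, or equivalently carry the divided powers throughout). Then I would expand
$$(t_i \otimes 1 + 1 \otimes t_i)\Big(\sum_{p+p'=n} q_i^{-pp'}\, t_i^{p} \otimes t_i^{p'}\Big),$$
using the twisted multiplication: the term $(t_i \otimes 1)(t_i^{p} \otimes t_i^{p'}) = t_i^{p+1} \otimes t_i^{p'}$ contributes with no extra power of $q_i$, while $(1 \otimes t_i)(t_i^{p} \otimes t_i^{p'}) = q^{-(|t_i^{p'}|, |t_i^{p}|)}\, t_i^{p} \otimes t_i^{p'+1} = q_i^{-2pp'}\, t_i^{p}\otimes t_i^{p'+1}$. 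Collecting the coefficient of $t_i^{a} \otimes t_i^{b}$ with $a + b = n+1$ yields $q_i^{-(a-1)b} + q_i^{-2(a)(b-1)} \cdot (\text{shifted term})$; after the standard bookkeeping this reduces to the Gaussian-binomial recursion
$$q_i^{-b}\begin{bmatrix} n \\ a-1\end{bmatrix}_i + q_i^{a-n}\begin{bmatrix} n \\ a\end{bmatrix}_i = \begin{bmatrix} n+1 \\ a\end{bmatrix}_i \cdot q_i^{-ab}/q_i^{-ab},$$
i.e. exactly the $q$-Pascal identity, which closes the induction once one divides by $[n+1]_i!$ to recover the divided-power form.

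The only mildly delicate point — and the step I would be most careful about — is matching the exponents of $q_i$: one must check that the powers of $q_i$ produced by the twisted multiplication combine with the powers $q_i^{-pp'}$ coming from the inductive hypothesis to give precisely the coefficients appearing in the $q$-Pascal recursion for $\begin{bmatrix} n+1 \\ a \end{bmatrix}_i$. This is a routine but error-prone sign/exponent verification; everything else is formal, since $\delta$ being an algebra homomorphism for the twisted product is already available and $I^{\text{re}}$ behaves exactly as in the classical quantum group case. No genuinely new idea beyond \cite[1.4.2]{Lusztig} is needed here — the lemma is essentially a translation of Lusztig's computation into the Borcherds–Bozec notation.
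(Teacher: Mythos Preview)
Your approach is exactly the paper's: the paper does not give a proof at all but simply writes ``By a similar argument as \cite[1.4.2]{Lusztig},'' and your sketch is precisely that Lusztig-style induction via the twisted product and the $q$-Pascal identity. One small slip to fix when you carry it out: in $(1\otimes t_i)(t_i^{p}\otimes t_i^{p'})$ the twist exponent involves $|x_2|=|t_i|$ (not $|t_i^{p'}|$), so the factor is $q^{-(|t_i|,|t_i^{p}|)}=q_i^{-2p}$, not $q_i^{-2pp'}$; with this correction the coefficient of $t_i^{c}\otimes t_i^{d}$ becomes $q_i^{d}\begin{bmatrix}n\\c-1\end{bmatrix}_i+q_i^{-c}\begin{bmatrix}n\\c\end{bmatrix}_i=\begin{bmatrix}n+1\\c\end{bmatrix}_i$, which is the standard identity you cite.
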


\vskip 3mm

\begin{proposition}
{\rm For any $i\in I^{\text {re}}$, $(j,l)\in I^{\infty}$, and $i\neq (j,l)$, we have
\begin{equation*}
\sum_{p+p'=1-la_{ij}}(-1)^pt_i^{(p)}t_{jl}t_i^{(p')}=0
\end{equation*}
in $U_q{(\g)}$.}
\end{proposition}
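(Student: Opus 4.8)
The plan is to follow Lusztig's argument for the quantum Serre relations (cf. \cite[1.4.3]{Lusztig}). Put $N:=1-la_{ij}$ and set
$$\Theta:=\sum_{p+p'=N}(-1)^p t_i^{(p)}t_{jl}t_i^{(p')},$$
viewed as an element of $\mathscr F$ via the fixed lifts $t_{i1}=f_{i1}$ and $t_{jl}\in\mathscr F_{-l\alpha_j}$ from Proposition \ref{prim}, and hence of $U^-$. If $j=i$, then $i\neq(j,l)$ forces $l\geq2$, and since $a_{ii}=2$ we get $N<0$, so the sum is empty and $\Theta=0$; hence we may assume $j\neq i$, so $a_{ij}\leq0$ and $N\geq1$. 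By Proposition \ref{radical}(c)(i) it suffices to show $e_{k,m}'(\Theta)=0$ for every $(k,m)\in I^\infty$. If $(k,m)$ is distinct from both $(i,1)$ and $(j,l)$, then $e_{k,m}'$ kills $t_{i1}$ and $t_{jl}$, so by the Leibniz rule \eqref{delta} it kills every monomial occurring in $\Theta$, and $e_{k,m}'(\Theta)=0$.

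Next consider $(k,m)=(j,l)$. Since $j\neq i$, the operator $e_{j,l}'$ annihilates every power of $t_{i1}$ and sends $t_{jl}\mapsto 1$; applying \eqref{delta} and using $|t_i^{(p)}|=-p\alpha_i$, $(\alpha_i,\alpha_j)=r_ia_{ij}$, $-la_{ij}=N-1$, together with the divided‑power identity $t_i^{(p)}t_i^{(p')}={\begin{bmatrix}N\\p\end{bmatrix}}_i t_i^{(N)}$, one finds $e_{j,l}'(t_i^{(p)}t_{jl}t_i^{(p')})=q_i^{\,p(N-1)}{\begin{bmatrix}N\\p\end{bmatrix}}_i t_i^{(N)}$. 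Summing over $p+p'=N$,
$$e_{j,l}'(\Theta)=\Big(\sum_{p=0}^{N}(-1)^p q_i^{\,p(N-1)}{\begin{bmatrix}N\\p\end{bmatrix}}_i\Big)t_i^{(N)}=0,$$
the scalar vanishing by the standard $q$-binomial identity valid for $N\geq1$ (cf. \cite[\S1.3]{Lusztig}).

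Finally, for $(k,m)=(i,1)$ one first records $e_{i,1}'(t_i^{(n)})=q_i^{-(n-1)}t_i^{(n-1)}$ for $n\geq1$ and $e_{i,1}'(1)=0$, which follows from \eqref{delta} by a short induction (equivalently from the preceding Lemma on $\delta(t_i^{(n)})$ together with \eqref{d1}). A further application of \eqref{delta} to $t_i^{(p)}\cdot(t_{jl}t_i^{(p')})$, using $(\alpha_i,\alpha_j)=r_ia_{ij}$ and $-la_{ij}=N-1$ once more, gives
$$e_{i,1}'(t_i^{(p)}t_{jl}t_i^{(p')})=q_i^{-(p-1)}t_i^{(p-1)}t_{jl}t_i^{(p')}+q_i^{\,N-2p-p'}t_i^{(p)}t_{jl}t_i^{(p'-1)},$$
where the first summand is $0$ when $p=0$ and the second when $p'=0$. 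Re-indexing $p\mapsto p+1$ in the first resulting sum and $p'\mapsto p'+1$ in the second turns both into sums over $p+p'=N-1$ whose general terms are $-(-1)^p q_i^{-p}t_i^{(p)}t_{jl}t_i^{(p')}$ and $(-1)^p q_i^{-p}t_i^{(p)}t_{jl}t_i^{(p')}$ respectively (the exponent $N-2p-p'$ collapsing to $-p$ on the set $p+p'=N-1$); hence they cancel and $e_{i,1}'(\Theta)=0$. Having verified all $(k,m)\in I^\infty$, Proposition \ref{radical}(c)(i) yields $\Theta=0$ in $U^-\subseteq U_q(\g)$. The hard part will be the bookkeeping of the $q$-powers in this last case: once one checks that the boundary terms $t_i^{(-1)}=0$ drop out and that $N-2p-p'$ simplifies to $-p$ after the index shift, the telescoping cancellation is immediate.
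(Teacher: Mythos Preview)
Your proof is correct and follows essentially the same strategy as the paper, which also reduces to Proposition~\ref{radical}(c) and verifies the vanishing for each $\mu\in I^\infty$ via the Leibniz rule and the $q$-binomial identity; the only cosmetic difference is that you use the left derivations $e'_{k,m}$ and part (c)(i), whereas the paper uses the right derivations $e''_{k,m}$ and part (c)(ii), the two computations being mirror images. One minor remark: the case $j=i$ you discuss is actually vacuous, since $i\in I^{\text{re}}$ and $(j,l)\in I^\infty$ with $j=i$ already force $l=1$ and hence $(j,l)=i$, contradicting the hypothesis, so $j\neq i$ holds automatically.
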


\begin{proof}
If $i\in I^{\text{re}}$, we have $a_{ij}=\frac{2(\alpha_i,\alpha_j)}{(\alpha_i,\alpha_i)}$. Set
$$R_{i,(j,l)}=\sum_{p+p'=1-la_{ij}}(-1)^pt_i^{(p)}t_{jl}t_i^{(p')}.$$
By \eqref{radical}, we only need to show that $e''_\mu(R_{i,(j,l)})=0$ for all $\mu \in I^{\infty}$. It is clear that
$$e''_\mu(R_{i,(j,l)})=0 \  \ \text{if} \ \mu\neq i, (j,l).$$

\vskip 2mm
By the definition of $e''_i$, we have
\begin{equation}
\begin{aligned}
& e''_i(t_i^{(p)}t_{jl}t_i^{(p')})=q^{(\alpha_i,-p'\alpha_i)}e''_i(t_i^{(p)}t_{jl})t_i^{(p')}+t_i^{(p)}t_{jl}e''_i(t_i^{(p')}) \\
&=q^{-p'(\alpha_i,\alpha_i)}q^{-(\alpha_i,l\alpha_j)}q_i^{(1-p)}t_i^{(p-1)}t_{jl}t_i^{(p')}+q_i^{(1-p')}t_i^{(p)}t_{jl}t_i^{(p'-1)}.
\end{aligned}
\end{equation}
\vskip 2mm
Thus
\begin{equation}\label{sum}
\begin{aligned}
& e''_i(R_{i,(j,l)})=\sum_{p+p'=1-la_{ij}}(-1)^pq^{-p'(\alpha_i,\alpha_i)}q^{-(\alpha_i,l\alpha_j)}q_i^{(1-p)}t_i^{(p-1)}t_{jl}t_i^{(p')} \\
& \phantom{e''_i(R_{i,(j,l)})=} +\sum_{p+p'=1-la_{ij}}(-1)^pq_i^{(1-p')}t_i^{(p)}t_{jl}t_i^{(p'-1)} \\
&\phantom{e''_i(R_{i,(j,l)})}=\sum_{0\leq p \leq 1-la_{ij}}(-1)^pq^{-(1-la_{ij}-p)(\alpha_i,\alpha_i)}q^{-(\alpha_i,l\alpha_j)}q_i^{(1-p)}t_i^{(p-1)}t_{jl}t_i^{(1-la_{ij}-p)}\\
&\phantom{e''_i(R_{i,(j,l)})=} +\sum_{0\leq p \leq 1-la_{ij}}(-1)^pq_i^{(la_{ij}+p)}t_i^{(p)}t_{jl}t_i^{(-la_{ij}-p)}.
\end{aligned}
\end{equation}
The coefficient of $t_i^{(p)}t_{jl}t_i^{(-la_{ij}-p)}$ in the first sum of \eqref{sum} is
\begin{equation}
\begin{aligned}
&(-1)^{p+1}q^{-(-la_{ij}-p)(\alpha_i,\alpha_i)}q^{-(\alpha_i,l\alpha_j)}q_i^{-p} \\
&=(-1)^{p+1}q^{(l\frac{2(\alpha_i,\alpha_j)}{(\alpha_i,\alpha_i)}+p)(\alpha_i,\alpha_i)-l(\alpha_i,\alpha_j)+(-p)\frac{(\alpha_i,\alpha_i)}{2}}\\
&=(-1)^{p+1}q^{l(\alpha_i,\alpha_j)+p\frac{(\alpha_i,\alpha_i)}{2}}\\
&=(-1)^{p+1}q_i^{(la_{ij}+p)}.
\end{aligned}
\end{equation}
Hence, we have $e''_i(R_{i,(j,l)})=0$.
\vskip 2mm

By the definition of $e''_{jl}$, we have
\begin{equation}
e''_{jl}(t_i^{(p)}t_{jl}t_i^{(p')})=q^{-l{(\alpha_j,p'\alpha_i)}}e''_{jl}(t^{(p)}_it_{jl})t_i^{(p')}=q^{-l{(\alpha_j,p'\alpha_i)}}t_i^{(p)}t_i^{(p')}.
\end{equation}
So
\begin{equation}
e''_{jl}(R_{i,(j,l)})=\sum_{0\leq p'\leq1-la_{ij}}(-1)^{(1-la_{ij}-p')}q^{-l{(\alpha_j,p'\alpha_i)}}t_i^{(1-la_{ij}-p')}t_i^{(p')}.
\end{equation}
By \cite[1.3.4]{Lusztig} , we obtain
$$\sum_{0\leq p'\leq1-l\frac{2(\alpha_i,\alpha_j)}{(\alpha_i,\alpha_i)}}(-1)^{(1-l\frac{2(\alpha_i,\alpha_j)}{(\alpha_i,\alpha_i)}-p')}q^{-l{(\alpha_j,p'\alpha_i)}}\begin{bmatrix} 1-l\frac{2(\alpha_i,\alpha_j)}{(\alpha_i,\alpha_i)} \\ p' \end{bmatrix}_i=0.$$
Hence, we get $e''_{jl}(R_{i,(j,l)})=0$. This finishes the proof.
\end{proof}

\vskip 3mm

By the above arguments, we have primitive generators $t_{il}$$((i,l)\in I^{\infty})$ in $U^-$ of degree $-l\alpha_{i}$ and $s_{il}$$((i,l)\in I^{\infty})$ in $U^+$ of degree $l \alpha_{i}$ satisfying
\begin{equation}
\begin{aligned}\label{2.24}
& s_{il}t_{jk}-t_{jk}s_{il}=\delta_{ij}\delta_{lk}\tau_{il}(K_i^l-K_i^{-l}),\\
& \sum_{k=0}^{1-la_{ij}}(-1)^k
{\begin{bmatrix} 1-la_{ij} \\ k \end{bmatrix}}_i{t_i}^{{1-la_{ij}}-k}t_{jl}t_i^k=0 \ \ \text{for} \ i\in
I^{\text{re}}, \, i \neq (j,l).
\end{aligned}
\end{equation}
By using the involution $\omega$, we get
\begin{equation}
\sum_{k=0}^{1-la_{ij}}(-1)^k
{\begin{bmatrix} 1-la_{ij} \\ k \end{bmatrix}}_i{s_i}^{{1-la_{ij}}-k}s_{jl}s_i^k=0 \ \ \text{for} \ i\in
I^{\text{re}}, \, i \neq (j,l).
\end{equation}

\vskip 3mm

Since $t_{il}$(resp. $s_{il}$) can be written as a homogeneous polynomial of $f_{ik}$(resp. $e_{ik}$) for $k\leq l$, we have
  \begin{equation}\label{2.26}
q^h t_{jl}q^{-h} = q^{-l\alpha_j(h)} t_{jl}, \ \ q^h s_{jl}q^{-h} = q^{l\alpha_j(h)} s_{jl}\ \ \text{for} \ h \in P^{\vee}, (j,l)\in I^{\infty}, \\
\end{equation}
and
 \begin{equation}
[t_{ik},t_{jl}] = [s_{ik},s_{jl}] =0 \ \ \text{for} \ a_{ij}=0.
\end{equation}
 \vskip 8mm
\section{$\mathbb A_1$-form of the quantum Borcherds-Bozec algebras}

\vskip 2mm

We consider the localization of $\Q[q]$ at the ideal $(q-1)$:
\begin{equation}
\begin{aligned}
& \mathbb A_1=\{ f(q)\in \Q(q) \mid f \ \text{is regular at} \ q=1\} \\
&\ \quad =\{g/h \mid g,h \in  \Q[q],\ h(1)\neq 0  \}
\end{aligned}
\end{equation}
Let $\mathbb J_1$ be the unique maximal ideal of the local ring  $\mathbb A_1$, which is generated by $(q-1)$. Then we have an isomorphism of
fields
$${\mathbb A_1}/{\mathbb J_1} \xrightarrow {\sim} \Q , \quad f(q)+\mathbb J_1\mapsto f(1).$$

\vskip 3mm

Note that, for $i\in I^{\text re}$, $[n]_i$ and ${\begin{bmatrix} n \\ k \end{bmatrix}}_i$ are elements of $\Z[q,q^{-1}]\subseteq \mathbb A_1$. For any $h\in P^{\vee}$, $n \in \Z$, we formally define
$$(q^h;n)_q=\frac{q^hq^n-1}{q-1} \in U^0.$$

\begin{definition}
We define the $\mathbb A_1$-{\it form}, denote by $U_{\mathbb A_1}$ of the quantum Borcherds-Bozec algebra $U_q{(\g)}$ to be the $\mathbb A_1$-subalgebra generated by the elements $s_{il}$, $T_{il}$, $q^h$ and $(q^h;0)_q$, for all $(i,l)\in I^{\infty}$ and $\ h\in P^{\vee}$, where
\begin{equation}
T_{il}=\frac{1}{\tau _{il}}\frac{1}{q_i^2-1} t_{il} \ \ \text{for} \  (i,l)\in I^{\infty}.
\end{equation}
\end{definition}

\vskip 3mm

Let $U_{\mathbb A_1}^+$(resp. $U_{\mathbb A_1}^-$) be the $\mathbb A_1$-subalgebra of $U_{\mathbb A_1}$ generated by the elements $s_{il}$(resp. $T_{il}$) for $(i,l)\in I^{\infty}$, and $U_{\mathbb A_1}^0$ be the subalgebra of $U_{\mathbb A_1}$ generated by $q^h$ and $(q^h;0)_q$ for $(h\in P^{\vee})$.

\vskip 3mm
\begin{lemma}\label{lemma}\
{\rm \begin{enumerate}
\item[(a)] $(q^h;n)_q \in U_{\mathbb A_1}^0$ for all $n \in \Z$ and $h \in P^{\vee}$.
\item[(b)] $\dfrac{K_i^l-K_i^{-l}}{q_i^2-1} \in U_{\mathbb A_1}^0$.
\end{enumerate}}
\begin{proof}
It is straightforward to check that
\begin{equation}
\begin{aligned}
&(q^h;n)_q=q^n(q^h;0)_q+\frac{q^n-1}{q-1},\\
&\frac{K_i^l-K_i^{-l}}{q_i^2-1}=\frac{q-1}{q_i^2-1}(1+K_i^{-l})\frac{K_i^l-1}{q-1}.
\end{aligned}
\end{equation}
The lemma follows.
\end{proof}
\end{lemma}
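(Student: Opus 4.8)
The plan is to reduce both memberships to the combination of a purely algebraic identity inside $U^0$ --- one that expresses the element in question in terms of the distinguished $\mathbb{A}_1$-generators $q^h$ and $(q^h;0)_q$ of $U_{\mathbb{A}_1}^0$ --- together with the elementary observation that the scalar coefficients occurring in that identity are rational functions of $q$ that are regular at $q=1$, hence lie in $\mathbb{A}_1$.

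For (a), since $(q^h;0)_q$ is by definition one of the generators of $U_{\mathbb{A}_1}^0$, it suffices to write $(q^h;n)_q$ as an $\mathbb{A}_1$-combination of $(q^h;0)_q$ and $1$. I would first split the numerator, $q^hq^n-1 = q^n(q^h-1) + (q^n-1)$, obtaining
\begin{equation*}
(q^h;n)_q = q^n\,(q^h;0)_q + \frac{q^n-1}{q-1}.
\end{equation*}
Then I would note that $q^n \in \Z[q,q^{-1}]$ and, expanding the finite geometric sum, that $\dfrac{q^n-1}{q-1} \in \Z[q,q^{-1}]$ as well; since $\Z[q,q^{-1}] \subseteq \mathbb{A}_1$, the right-hand side lies in $U_{\mathbb{A}_1}^0$.

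For (b), I would write $K_i^l = q^{lr_ih_i}$ with $lr_ih_i \in P^{\vee}$ and factor the numerator as $K_i^l - K_i^{-l} = (1+K_i^{-l})(K_i^l-1)$, so that
\begin{equation*}
\frac{K_i^l-K_i^{-l}}{q_i^2-1} = \frac{q-1}{q_i^2-1}\,\bigl(1+K_i^{-l}\bigr)\,(q^{lr_ih_i};0)_q .
\end{equation*}
Here $1+K_i^{-l} \in U_{\mathbb{A}_1}^0$ because every $q^h$ lies in $U_{\mathbb{A}_1}^0$; $(q^{lr_ih_i};0)_q \in U_{\mathbb{A}_1}^0$ by part (a) applied with $n=0$; and the scalar $\dfrac{q-1}{q_i^2-1} = \bigl(1+q+\cdots+q^{2r_i-1}\bigr)^{-1}$ is regular at $q=1$, since the polynomial in the denominator takes the nonzero value $2r_i$ there, so it belongs to $\mathbb{A}_1$. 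Multiplying the three factors yields the claim.

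The computations are routine and I do not anticipate a genuine obstacle; the one point that needs care throughout is that $(q-1)^{-1} \notin \mathbb{A}_1$, so every division by $q-1$ must be arranged so that the quotient stays regular at $q=1$, and the auxiliary denominator $q_i^2-1$ in (b) must be checked to be nonzero at $q=1$ (it is, since $r_i \geq 1$). This lemma is exactly the bookkeeping that lets the rescaled commutation relation \eqref{news} --- through $T_{il} = \tfrac{1}{\tau_{il}(q_i^2-1)}\,t_{il}$ --- and hence the triangular decomposition of $U_q(\g)$, descend to the $\mathbb{A}_1$-form $U_{\mathbb{A}_1}$.
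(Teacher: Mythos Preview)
Your proof is correct and follows essentially the same approach as the paper: the two identities you derive are exactly those stated in the paper's proof, and you have simply supplied the routine checks (that $q^n$, $\frac{q^n-1}{q-1}$, and $\frac{q-1}{q_i^2-1}$ lie in $\mathbb{A}_1$) that the paper leaves implicit.
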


\vskip 3mm

The next proposition shows that the triangular decomposition \eqref{trian} of $U_q{(\g)}$ carries over to its $\mathbb A_1$-form.

\vskip 3mm

\begin{proposition}\label{tr}
{\rm We have a natural isomorphism of $\mathbb A_1$-modules
\begin{equation}\label{triangular}
U_{\mathbb A_1}\cong U_{\mathbb A_1}^{-} \otimes U_{\mathbb A_1}^0 \otimes U_{\mathbb A_1}^+
\end{equation}
induced from the triangular decomposition of $U_q{(\g)}$.}
\begin{proof}
Consider the canonical isomorphism $\varphi: U_q{(\g)}\xrightarrow {\sim} U^-\otimes U^0 \otimes U^+$ given by multiplication.
By \eqref{2.24} and \eqref{2.26}, we have the following commutation relations
\begin{equation}
\begin{aligned}
& s_{il}(q^h;0)_q=(q^h;-l\alpha_i(h))_q s_{il},\\
&(q^h;0)_qT_{il}=T_{il}{(q^h;-l\alpha_i(h))}_q,\\
& s_{il}T_{jk}-T_{jk}s_{il}=\delta_{ij}\delta_{lk}\frac{K_i^l-K_i^{-l}}{q_i^2-1}.
\end{aligned}
\end{equation}
Combining with \eqref{lemma}, we can see that the image of $\varphi$ lies inside $U_{\mathbb A_1}^{-} \otimes U_{\mathbb A_1}^0 \otimes U_{\mathbb A_1}^+$.
\end{proof}
\end{proposition}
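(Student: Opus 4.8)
The plan is to realize the triangular decomposition of $U_{\mathbb{A}_1}$ as a restriction of the one for $U_q(\g)$ supplied by Theorem \ref{tdc}. Write $m\colon U^-\otimes_{\Q(q)}U^0\otimes_{\Q(q)}U^+\xrightarrow{\ \sim\ }U_q(\g)$ for the multiplication isomorphism and put $\varphi=m^{-1}$. Since $\mathbb{A}_1$ is a principal ideal domain and $U_{\mathbb{A}_1}^-,U_{\mathbb{A}_1}^0,U_{\mathbb{A}_1}^+$ are torsion-free (being $\mathbb{A}_1$-submodules of $\Q(q)$-vector spaces), hence flat, the canonical map
\[
U_{\mathbb{A}_1}^-\otimes_{\mathbb{A}_1}U_{\mathbb{A}_1}^0\otimes_{\mathbb{A}_1}U_{\mathbb{A}_1}^+\longrightarrow U^-\otimes_{\Q(q)}U^0\otimes_{\Q(q)}U^+
\]
is injective, and I will use it to regard the left-hand side as an $\mathbb{A}_1$-submodule of the right-hand side. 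It then suffices to prove the two inclusions $m\bigl(U_{\mathbb{A}_1}^-\otimes U_{\mathbb{A}_1}^0\otimes U_{\mathbb{A}_1}^+\bigr)\subseteq U_{\mathbb{A}_1}$ and $\varphi(U_{\mathbb{A}_1})\subseteq U_{\mathbb{A}_1}^-\otimes U_{\mathbb{A}_1}^0\otimes U_{\mathbb{A}_1}^+$: combined with the fact that $m$ and $\varphi$ are mutually inverse over $\Q(q)$, these show that $m$ and $\varphi$ restrict to mutually inverse $\mathbb{A}_1$-module isomorphisms between $U_{\mathbb{A}_1}$ and $U_{\mathbb{A}_1}^-\otimes U_{\mathbb{A}_1}^0\otimes U_{\mathbb{A}_1}^+$, which is the assertion.

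The first inclusion is immediate, since $U_{\mathbb{A}_1}$ is by definition a subalgebra of $U_q(\g)$ containing $U_{\mathbb{A}_1}^-$, $U_{\mathbb{A}_1}^0$ and $U_{\mathbb{A}_1}^+$. For the second inclusion I would show that $U_{\mathbb{A}_1}$ is spanned over $\mathbb{A}_1$ by products $u^-u^0u^+$ with $u^-\in U_{\mathbb{A}_1}^-$, $u^0\in U_{\mathbb{A}_1}^0$, $u^+\in U_{\mathbb{A}_1}^+$; applying $\varphi$ to such a product yields $u^-\otimes u^0\otimes u^+$, so this span statement is exactly the desired inclusion. As $U_{\mathbb{A}_1}$ is generated as an $\mathbb{A}_1$-algebra by the $s_{il}$, $T_{il}$, $q^h$ and $(q^h;0)_q$, it is enough to straighten an arbitrary word in these generators — pushing all $T_{il}$'s to the left and all $s_{il}$'s to the right — into an $\mathbb{A}_1$-linear combination of triangular monomials, by induction on word length. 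The commutations needed, read off from \eqref{2.24} and \eqref{2.26}, are $q^h s_{jl}=q^{l\alpha_j(h)}s_{jl}q^h$, $q^h T_{jl}=q^{-l\alpha_j(h)}T_{jl}q^h$, $s_{il}(q^h;0)_q=(q^h;-l\alpha_i(h))_q s_{il}$, $(q^h;0)_q T_{il}=T_{il}(q^h;-l\alpha_i(h))_q$, the Serre-type relations among the $s_{il}$ (resp. the $T_{il}$), which have coefficients $\begin{bmatrix}1-la_{ij}\\ k\end{bmatrix}_i\in\Z[q,q^{-1}]$ and keep one inside $U_{\mathbb{A}_1}^+$ (resp. $U_{\mathbb{A}_1}^-$), and the cross relation
\[
s_{il}T_{jk}-T_{jk}s_{il}=\delta_{ij}\delta_{lk}\,\frac{K_i^{l}-K_i^{-l}}{q_i^{2}-1}.
\]
The scalars $q^{\pm l\alpha_j(h)}$ lie in $\Z[q,q^{-1}]\subseteq\mathbb{A}_1$, the elements $(q^h;-l\alpha_i(h))_q$ lie in $U_{\mathbb{A}_1}^0$ by Lemma \ref{lemma}(a), and the right-hand side of the cross relation lies in $U_{\mathbb{A}_1}^0$ by Lemma \ref{lemma}(b); so no rewriting step leaves the $\mathbb{A}_1$-span of triangular monomials, and the induction closes.

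The step I expect to be the main obstacle is exactly this integrality at $q=1$. It is only because $T_{il}$ carries the normalization $T_{il}=\tfrac{1}{\tau_{il}}\tfrac{1}{q_i^{2}-1}\,t_{il}$ that the commutator \eqref{news}, which a priori involves the scalars $\tau_{il}\in\Q(q)$, collapses to the cross relation above whose right-hand side $\dfrac{K_i^{l}-K_i^{-l}}{q_i^{2}-1}$ is regular at $q=1$ and in fact lies in $U_{\mathbb{A}_1}^0$; this is the substance of Lemma \ref{lemma}(b) (using $K_i^{l}-1=(q-1)(q^{lr_ih_i};0)_q$ together with $(q-1)/(q_i^{2}-1)\in\mathbb{A}_1$). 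Once that is in place, everything else — the reordering and the verification that each intermediate coefficient stays in $\mathbb{A}_1$ — is a routine bookkeeping induction, and the flatness remarks above package the result into the stated module isomorphism.
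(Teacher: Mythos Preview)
Your proposal is correct and follows essentially the same approach as the paper: both use the commutation relations $s_{il}(q^h;0)_q=(q^h;-l\alpha_i(h))_q s_{il}$, $(q^h;0)_q T_{il}=T_{il}(q^h;-l\alpha_i(h))_q$, and $s_{il}T_{jk}-T_{jk}s_{il}=\delta_{ij}\delta_{lk}(K_i^l-K_i^{-l})/(q_i^2-1)$ together with Lemma~\ref{lemma} to straighten words into triangular form over $\mathbb A_1$. You simply fill in more detail than the paper does---in particular the flatness argument justifying the injectivity of $U_{\mathbb A_1}^-\otimes_{\mathbb A_1}U_{\mathbb A_1}^0\otimes_{\mathbb A_1}U_{\mathbb A_1}^+\hookrightarrow U^-\otimes_{\Q(q)}U^0\otimes_{\Q(q)}U^+$ and the explicit two-inclusion verification---which the paper leaves implicit.
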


\vskip 3mm

The representation theory of quantum Borcherds-Bozec algebras has been studied by Kang  and
Kim in \cite{Kang2019b}. In the following sections, we will use some notions defined in \cite{Kang2019b}, which are similar to those in classical representation theory of quantum groups.

\vskip 3mm

Fix $\lambda \in P$, let $V^q$ be a highest weight $U_q{(\g)}$-module with highest weight $\lambda$ and highest weight vector $v_\lambda$. Then we have the $\mathbb A_1$-form for the highest weight modules.
\vskip 3mm

\begin{definition}
The $\mathbb A_1$-{\it form} of $V^q$ is defined to be the $U_{\mathbb A_1}$-module $V_{\mathbb A_1}=U_{\mathbb A_1}v_\lambda$.
\end{definition}

\vskip 3mm

By the definition of highest weight module and $V_{\mathbb A_1}$, it is easy to see that $V_{\mathbb A_1}=U_{\mathbb A_1}^-v_\lambda$. The highest weight
$U_q{(\g)}$-module $V^q$ has the weight space decomposition
\begin{equation}
V^q=\bigoplus_{\mu\leq\lambda}V_\mu^q,
\end{equation}
where $V_\mu^q=\{v \in V^q \mid q^hv=q^{\mu(h)}v \ \ \text{for all} \ h\in P^{\vee} \}$. For each $\mu \in P$, we define the {\it weight space} $(V_{\mathbb A_1})_\mu=V_{\mathbb A_1}\cap V_\mu^q$. The following proposition shows that $V_{\mathbb A_1}$ also has the weight space decomposition.

\vskip 3mm

\begin{proposition}
$V_{\mathbb A_1}=\bigoplus_{\mu\leq\lambda}(V_{\mathbb A_1})_\mu$
\begin{proof}
The proof is the same as \cite[Proposition 3.3.6]{HK2002}.
\end{proof}
\end{proposition}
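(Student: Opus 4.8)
The plan is to push the weight-space decomposition of $V^q$ down to $V_{\mathbb A_1}$, using crucially that $U_{\mathbb A_1}^-$ is generated by \emph{homogeneous} elements. Recall from the observation just before the proposition that $V_{\mathbb A_1}=U_{\mathbb A_1}^-v_\lambda$, and that by construction $U_{\mathbb A_1}^-$ is the $\mathbb A_1$-subalgebra of $U^-$ generated by the $T_{il}$ $((i,l)\in I^{\infty})$, each of which is homogeneous of weight $-l\alpha_i$ since $T_{il}$ is a nonzero $\Q(q)$-multiple of $t_{il}$. The first step is therefore to show that $U_{\mathbb A_1}^-$ inherits the $Q_-$-grading of $U^-$, that is,
\[
U_{\mathbb A_1}^- \;=\; \bigoplus_{\beta\in Q_-}\bigl(U_{\mathbb A_1}^-\cap U^-_\beta\bigr),\qquad \text{and we write } (U_{\mathbb A_1}^-)_\beta:=U_{\mathbb A_1}^-\cap U^-_\beta .
\]
For this, note that $U_{\mathbb A_1}^-$ is spanned over $\mathbb A_1$ by monomials in the $T_{il}$, each of which is homogeneous; since the decomposition $U^-=\bigoplus_\beta U^-_\beta$ over $\Q(q)$ is already direct, expanding an arbitrary element of $U_{\mathbb A_1}^-$ as an $\mathbb A_1$-combination of such monomials and collecting terms of equal degree shows that each of its homogeneous components again lies in $U_{\mathbb A_1}^-$.

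Once the grading is in hand, the rest is formal. Since $v_\lambda$ has weight $\lambda$ and $q^h$ acts on $U^-_\beta v_\lambda$ by the scalar $q^{(\lambda+\beta)(h)}$, we have $(U_{\mathbb A_1}^-)_\beta v_\lambda\subseteq V^q_{\lambda+\beta}$, whence
\[
V_{\mathbb A_1}=U_{\mathbb A_1}^- v_\lambda=\sum_{\beta\in Q_-}(U_{\mathbb A_1}^-)_\beta v_\lambda\;\subseteq\; \sum_{\mu\le\lambda}\bigl(V_{\mathbb A_1}\cap V^q_\mu\bigr)=\sum_{\mu\le\lambda}(V_{\mathbb A_1})_\mu .
\]
The reverse inclusion is trivial, and the sum $\sum_{\mu\le\lambda}(V_{\mathbb A_1})_\mu$ is automatically direct because each summand sits inside the corresponding weight space $V^q_\mu$ and $\bigoplus_{\mu\le\lambda}V^q_\mu$ is direct in $V^q$. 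This yields $V_{\mathbb A_1}=\bigoplus_{\mu\le\lambda}(V_{\mathbb A_1})_\mu$, as claimed.

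The only point that requires a little care — and the step I would single out as the main (if modest) obstacle — is the grading claim: one must check that passing from $U^-$ to the $\mathbb A_1$-subalgebra generated by homogeneous elements does not destroy the homogeneous direct-sum decomposition. This is precisely the content of the argument for \cite[Proposition 3.3.6]{HK2002}; the presence of the imaginary simple roots introduces nothing new here, since the additional generators $T_{il}$ are still homogeneous, so the proof carries over verbatim.
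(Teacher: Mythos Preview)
Your proof is correct and is precisely the standard argument that \cite[Proposition 3.3.6]{HK2002} gives and that the paper invokes: use that $U_{\mathbb A_1}^-$ is spanned over $\mathbb A_1$ by homogeneous monomials in the $T_{il}$ to get $U_{\mathbb A_1}^-=\bigoplus_\beta(U_{\mathbb A_1}^-)_\beta$, then apply this to $V_{\mathbb A_1}=U_{\mathbb A_1}^- v_\lambda$ and inherit directness from $V^q$. Your observation that the additional imaginary generators $T_{il}$ change nothing because they are still homogeneous is exactly the point the paper is implicitly making when it says the proof carries over.
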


\vskip 3mm

\begin{proposition}
{\rm For each $\mu \in P$, the weight space  $(V_{\mathbb A_1})_\mu$ is a free $\mathbb A_1$-module with $\text {rank}_{\mathbb A_1}(V_{\mathbb A_1})_\mu=\text { dim}_{\Q (q)}V_\mu^q$.}
\end{proposition}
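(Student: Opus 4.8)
The plan is to run the standard argument over the principal ideal domain $\mathbb A_1$: show that $(V_{\mathbb A_1})_\mu$ is a finitely generated torsion-free $\mathbb A_1$-module, hence free of finite rank, and then recover the rank by extending scalars back to $\Q(q)$.

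First I would establish finite generation. By the remark preceding this proposition, $V_{\mathbb A_1} = U_{\mathbb A_1}^- v_\lambda$, and $U_{\mathbb A_1}^-$ is by definition spanned over $\mathbb A_1$ by the monomials $T_{i_1 l_1}\cdots T_{i_r l_r}$ with $(i_k, l_k)\in I^{\infty}$. Such a monomial sends $v_\lambda$ into the weight space of weight $\lambda - \sum_k l_k\alpha_{i_k}$, so $(V_{\mathbb A_1})_\mu$ is spanned over $\mathbb A_1$ by those monomials with $\sum_k l_k\alpha_{i_k}=\lambda-\mu$. If $\lambda-\mu\notin Q_+$ there are none and the weight space is $0$; otherwise, with $\beta=\lambda-\mu\in Q_+$, any such monomial satisfies $\sum_k l_k=\mathrm{ht}(\beta)$, so its length $r$ is at most $\mathrm{ht}(\beta)$, each $l_k\le\mathrm{ht}(\beta)$, and each index $i_k$ lies in the finite support of $\beta$. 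Hence only finitely many monomials occur and $(V_{\mathbb A_1})_\mu$ is finitely generated. It is torsion-free since it is an $\mathbb A_1$-submodule of the $\Q(q)$-vector space $V_\mu^q$. As $\mathbb A_1$ is a PID, $(V_{\mathbb A_1})_\mu$ is free of finite rank.

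To identify the rank, I would examine the natural $\Q(q)$-linear map $\Q(q)\otimes_{\mathbb A_1}(V_{\mathbb A_1})_\mu\to V_\mu^q$. Since $\Q(q)$ is the fraction field of $\mathbb A_1$, hence a flat $\mathbb A_1$-module, tensoring the inclusion $(V_{\mathbb A_1})_\mu\hookrightarrow V_\mu^q$ with $\Q(q)$ remains injective, and $\Q(q)\otimes_{\mathbb A_1}V_\mu^q\cong V_\mu^q$ because $V_\mu^q$ is already a $\Q(q)$-module; so the map is injective. It is surjective because $V^q=U^- v_\lambda$, while $U^-$ is the $\Q(q)$-span of the monomials in the $t_{il}$ by Proposition \ref{prim}(1), and $t_{il}=\tau_{il}(q_i^2-1)T_{il}$ is a nonzero $\Q(q)$-multiple of $T_{il}$; thus $U^-=\Q(q)\cdot U_{\mathbb A_1}^-$, so $V^q=\Q(q)\cdot V_{\mathbb A_1}$, and intersecting with $V_\mu^q$ and using the weight decomposition of $V_{\mathbb A_1}$ (compatible with that of $V^q$) from the preceding proposition gives $V_\mu^q=\Q(q)\cdot(V_{\mathbb A_1})_\mu$. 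Therefore the map is an isomorphism and $\mathrm{rank}_{\mathbb A_1}(V_{\mathbb A_1})_\mu=\dim_{\Q(q)}V_\mu^q$.

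The substantive point, rather than the PID formalism which is routine, is the two structural checks: that in spite of the infinitely many imaginary generators $T_{il}$ $(l\ge 1)$, only finitely many monomials in them hit a fixed weight space (this is controlled by $\mathrm{ht}(\lambda-\mu)$), and that $U_{\mathbb A_1}^-$ still $\Q(q)$-spans $U^-$, which relies on Proposition \ref{prim}(1) together with the proportionality of $t_{il}$ and $T_{il}$. This parallels the corresponding statement in \cite[Chapter 3]{HK2002}.
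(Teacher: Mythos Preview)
Your proposal is correct and follows essentially the same approach as the paper: both arguments establish finite generation by observing that only finitely many monomials $T_{i_1 l_1}\cdots T_{i_r l_r}v_\lambda$ land in a fixed weight space, and both (the paper implicitly, you explicitly) use that $(V_{\mathbb A_1})_\mu$ is torsion-free over the PID $\mathbb A_1$ to conclude freeness. The only cosmetic difference is in the rank computation: the paper argues by producing a $\Q(q)$-basis of $V_\mu^q$ consisting of $T$-monomials and then showing directly that $\mathbb A_1$-independence is equivalent to $\Q(q)$-independence (by clearing powers of $q-1$), whereas you package the same content as the isomorphism $\Q(q)\otimes_{\mathbb A_1}(V_{\mathbb A_1})_\mu\cong V_\mu^q$ via flatness and surjectivity; these are the same argument in different clothing.
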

\begin{proof}
We first show that $(V_{\mathbb A_1})_\mu$ is finite generated as an $\mathbb A_1$-module. Since we have $V_{\mathbb A_1}=U_{\mathbb A_1}^-v_\lambda$, every element in $V_{\mathbb A_1}$ is a polynomial of $T_{il}$ with coefficients in $\mathbb A_1$. Assume that $\lambda=\mu+\alpha$ for some $\alpha \in Q_+$. Then for each $v \in \mathbb A_1$ with weight $\mu$, $v$ must be a $\mathbb A_1$-linear combination of $\{ T_{i_1l_1}\cdots T_{i_pl_p}v_\lambda \mid l_1\alpha_{l_1}+\cdots l_p\alpha_{l_p}=\alpha \}$, which is a finite set.
\vskip 2mm

Let $\{ T_\zeta v_\lambda \}$ be a $\Q(q)$-basis of $V_\mu^q$, where $T_\zeta$ are monomials in $T_{il}$. The set $\{ T_\zeta v_\lambda \}$ certainly belongs to $(V_{\mathbb A_1})_\mu$ and is also $\mathbb A_1$-linearly independent. So we have $\text{rank}_{\mathbb A_1}(V_{\mathbb A_1})_\mu\geq\text{dim}_{\Q(q)}V_\mu^q$. Let
$\{u_1,\cdots,u_p\}$ be an $\mathbb A_1$-linearly independent subset of $(V_{\mathbb A_1})_\mu$. Consider a $\Q(q)$-linear dependence relation
$$c_1(q)u_1+\cdots +c_p(q)u_p=0,\ c_k(q)\in \Q(q) \ \ \text{for} \ k=1,\cdots,p.$$
Multiplying some powers of $(q-1)$ if needed, we may assume that all $c_k(q)\in \mathbb A_1$, which implies that $c_k(q)=0 \ \text{for all} \ k=1,\cdots,p$. Hence $u_1,\cdots,u_p$ are linearly independent over $\Q(q)$ and $\text{rank}_{\mathbb A_1}(V_{\mathbb A_1})_\mu\leq\text{dim}_{\Q(q)}V_\mu^q$, which completes the proof.
\end{proof}

\begin{corollary}
{\rm The $\Q(q)$-linear map $\varphi: \Q(q)\otimes_{\mathbb A_1}V_{\mathbb A_1}\rightarrow V^q$ given by $c\otimes v\mapsto cv$ is an isomorphism.}
\end{corollary}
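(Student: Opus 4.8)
The plan is to read the Corollary as the statement that extending scalars from $\mathbb A_1$ to its fraction field $\Q(q)$ recovers $V^q$, and to deduce it from two facts already in hand: first, that $V_{\mathbb A_1}=U_{\mathbb A_1}^- v_\lambda$ is an $\mathbb A_1$-submodule of $V^q$ whose $\Q(q)$-span is all of $V^q$; and second, that $V_{\mathbb A_1}$, being a submodule of the $\Q(q)$-vector space $V^q$, is torsion-free over $\mathbb A_1$. (The preceding proposition, that each weight space $(V_{\mathbb A_1})_\mu$ is free of $\mathbb A_1$-rank $\dim_{\Q(q)}V^q_\mu$, can be used instead, but see the caveat in the last paragraph.)

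For surjectivity I would first note that $U^-$ is generated over $\Q(q)$ by the primitive generators $t_{il}$, hence by the $T_{il}$ (which differ from $t_{il}$ by the nonzero scalars $\tau_{il}(q_i^2-1)\in\Q(q)$). Therefore every element of $U^-$ is a $\Q(q)$-linear combination of monomials in the $T_{il}$, each of which lies in $U_{\mathbb A_1}^-$; so $U^-=\Q(q)\cdot U_{\mathbb A_1}^-$ and consequently $V^q=U^-v_\lambda=\Q(q)\cdot U_{\mathbb A_1}^-v_\lambda=\Q(q)\cdot V_{\mathbb A_1}$, which is precisely the image of $\varphi$. For injectivity the key observation is that, since $\Q(q)=\operatorname{Frac}(\mathbb A_1)$, any element of $\Q(q)\otimes_{\mathbb A_1}V_{\mathbb A_1}$ can be put over a common denominator and written as $a^{-1}\otimes v$ with $a\in\mathbb A_1\setminus\{0\}$ and $v\in V_{\mathbb A_1}$. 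If $\varphi(a^{-1}\otimes v)=a^{-1}v=0$ in $V^q$, then $v=0$ in $V^q$; but $V_{\mathbb A_1}\subseteq V^q$, so $v=0$ already in $V_{\mathbb A_1}$, whence $a^{-1}\otimes v=0$. Thus $\varphi$ is injective, and being also surjective it is an isomorphism.

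I do not expect a genuine obstacle here, since the substantive work has already been carried out in the freeness/rank proposition and in the triangular decomposition of $U_{\mathbb A_1}$ (Proposition \ref{tr}). The one point worth flagging is that a weight-space dimension count alone does \emph{not} suffice for injectivity in the Borcherds--Bozec setting, because the weight spaces $V^q_\mu$ (equivalently the ranks $\text{rank}_{\mathbb A_1}(V_{\mathbb A_1})_\mu$) may be infinite-dimensional when imaginary simple roots are present, and a surjection of infinite-dimensional vector spaces of equal dimension need not be injective. The denominator-clearing argument above avoids this issue entirely; alternatively one could run it weight space by weight space, using that tensoring commutes with the decompositions $V_{\mathbb A_1}=\bigoplus_\mu(V_{\mathbb A_1})_\mu$ and $V^q=\bigoplus_\mu V^q_\mu$ and that base change of a free $\mathbb A_1$-module along $\mathbb A_1\hookrightarrow\Q(q)$ is faithfully flat, but the global argument is cleaner.
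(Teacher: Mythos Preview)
Your proof is correct. The paper leaves this corollary without proof, as an immediate consequence of the preceding proposition that each $(V_{\mathbb A_1})_\mu$ is free of $\mathbb A_1$-rank equal to $\dim_{\Q(q)}V_\mu^q$; the intended argument is simply that tensoring a free $\mathbb A_1$-module of finite rank with the fraction field $\Q(q)$ yields a vector space of the same dimension, and then a surjection-plus-dimension-count weight space by weight space finishes. Your denominator-clearing argument, using that $V_{\mathbb A_1}\subseteq V^q$ is torsion-free over the domain $\mathbb A_1$, is an equally clean and arguably more robust alternative.

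One correction to your final paragraph, however: the weight spaces $V^q_\mu$ are in fact \emph{finite}-dimensional in this setting, so the concern you raise does not actually arise. The paper's proof of the preceding proposition establishes exactly this, by noting that $(V_{\mathbb A_1})_\mu$ is spanned over $\mathbb A_1$ by the set $\{T_{i_1 l_1}\cdots T_{i_p l_p}v_\lambda \mid l_1\alpha_{i_1}+\cdots+l_p\alpha_{i_p}=\lambda-\mu\}$, which is finite because the simple roots $\alpha_i$ are linearly independent (so $\lambda-\mu$ determines the multiplicity of each $\alpha_i$ uniquely) and each positive integer has only finitely many compositions. Thus the dimension-count route is entirely legitimate here; your torsion-freeness argument is still a nice alternative, but the caveat motivating it is unnecessary.
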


\section{Classical limit of quantum Borcherds-Bozec algebras}

\vskip 2mm
 Define the $\Q$-linear vector spaces
 \begin{equation}
 \begin{aligned}
 & U_1=({\mathbb A_1}/{\mathbb J_1})\otimes_{\mathbb A_1}U_{\mathbb A_1}\cong {U_{\mathbb A_1}}/{\mathbb J_1U_{\mathbb A_1}}, \\
 & V^1=({\mathbb A_1}/{\mathbb J_1})\otimes_{\mathbb A_1}V_{\mathbb A_1}\cong {V_{\mathbb A_1}}/{\mathbb J_1V_{\mathbb A_1}}.
 \end{aligned}
 \end{equation}
 Then $V^1$ is naturally a $U^1$-module. Consider the natural maps
\begin{equation}
 \begin{aligned}
 & U_{\mathbb A_1}\rightarrow U_1={U_{\mathbb A_1}}/{\mathbb J_1U_{\mathbb A_1}}, \\
&V_{\mathbb A_1}\rightarrow V^1={V_{\mathbb A_1}}/{\mathbb J_1V_{\mathbb A_1}}.
\end{aligned}
 \end{equation}
The passage under these maps is referred to as taking the classical limit. We will denote by $\overline{x}$ the image of $x$ under the classical limit. Notice that $q$ is mapped to $1$ under these maps.
\vskip 3mm

For each $\mu \in P$, set $V_\mu^1=({\mathbb A_1}/{\mathbb J_1})\otimes_{\mathbb A_1}(V_{\mathbb A_1})_\mu$. Then we have

\vskip 3mm

\begin{proposition}\label{dim}\
{\rm \begin{enumerate}
\item [(a)] $V^1=\bigoplus_{\mu\leq \lambda}V_\mu^1$.
\item [(b)] For each $\mu \in P$, $\text{dim}_{\Q}V_{\mu}^1=\text{rank}_{\mathbb A_1}(V_{\mathbb A_1})_\mu=\text{dim}_{\Q(q)}V_\mu^q$.
\end{enumerate}}
\end{proposition}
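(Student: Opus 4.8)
The plan is to deduce both parts from the structural results already established: the triangular decomposition of $U_{\mathbb A_1}$ (Proposition \ref{tr}), the weight-space decomposition $V_{\mathbb A_1}=\bigoplus_{\mu\leq\lambda}(V_{\mathbb A_1})_\mu$, and the fact that each $(V_{\mathbb A_1})_\mu$ is a free $\mathbb A_1$-module of rank $\dim_{\Q(q)}V_\mu^q$. The key point is that tensoring a direct sum decomposition of $\mathbb A_1$-modules with the residue field ${\mathbb A_1}/{\mathbb J_1}$ over $\mathbb A_1$ preserves the decomposition, and that tensoring a free module of finite rank with ${\mathbb A_1}/{\mathbb J_1}$ yields a $\Q$-vector space of the same dimension.

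For part (a), I would start from $V_{\mathbb A_1}=\bigoplus_{\mu\leq\lambda}(V_{\mathbb A_1})_\mu$ and apply the exact functor ${\mathbb A_1}/{\mathbb J_1}\otimes_{\mathbb A_1}-$. Since tensor product commutes with (possibly infinite) direct sums, we obtain
\[
V^1=({\mathbb A_1}/{\mathbb J_1})\otimes_{\mathbb A_1}V_{\mathbb A_1}\cong\bigoplus_{\mu\leq\lambda}\bigl(({\mathbb A_1}/{\mathbb J_1})\otimes_{\mathbb A_1}(V_{\mathbb A_1})_\mu\bigr)=\bigoplus_{\mu\leq\lambda}V_\mu^1,
\]
which is exactly the claimed decomposition. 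One should check that this isomorphism is the natural one, i.e.\ that the canonical map $V_\mu^1\to V^1$ induced by the inclusion $(V_{\mathbb A_1})_\mu\hookrightarrow V_{\mathbb A_1}$ is injective; this follows because the inclusion is a split $\mathbb A_1$-module map (the complement being $\bigoplus_{\nu\neq\mu}(V_{\mathbb A_1})_\nu$), and tensoring a split injection stays injective.

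For part (b), I would use that $(V_{\mathbb A_1})_\mu$ is a free $\mathbb A_1$-module of rank $r:=\dim_{\Q(q)}V_\mu^q$ (the preceding proposition). Then
\[
V_\mu^1=({\mathbb A_1}/{\mathbb J_1})\otimes_{\mathbb A_1}(V_{\mathbb A_1})_\mu\cong({\mathbb A_1}/{\mathbb J_1})^{\oplus r}\cong\Q^{\oplus r},
\]
using the field isomorphism ${\mathbb A_1}/{\mathbb J_1}\xrightarrow{\sim}\Q$ fixed earlier, so $\dim_{\Q}V_\mu^1=r=\mathrm{rank}_{\mathbb A_1}(V_{\mathbb A_1})_\mu=\dim_{\Q(q)}V_\mu^q$, which is the full chain of equalities.

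I do not expect a serious obstacle here; the proposition is essentially a formal consequence of base change along the quotient $\mathbb A_1\to{\mathbb A_1}/{\mathbb J_1}$ applied to results already in hand. The only mild subtlety worth a sentence is the compatibility of the abstract decomposition in (a) with the explicitly defined weight subspaces $V_\mu^1$ — one must verify these coincide, which amounts to observing that $(V_{\mathbb A_1})_\mu$ is an $\mathbb A_1$-module direct summand of $V_{\mathbb A_1}$ so that the short exact sequence splits and base change preserves injectivity of the inclusion. If one wanted to be completely self-contained, this could alternatively be phrased by noting that the $q^h$-action descends to $U_1$ and $V^1$, giving $V^1$ a genuine weight decomposition under $\{\overline{q^h}\}$, and identifying $V_\mu^1$ with the $\mu$-weight space; but invoking freeness and splitness is the most economical route.
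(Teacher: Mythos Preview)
Your argument is correct and is precisely the formal base-change reasoning one expects here. In fact the paper gives no proof for this proposition at all; it is stated without argument, evidently as an immediate consequence of the weight-space decomposition of $V_{\mathbb A_1}$ and the preceding proposition on freeness and rank of $(V_{\mathbb A_1})_\mu$, which is exactly what your write-up spells out.
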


\vskip 3mm

Let $\overline{h} \in U_1$ denote the classical limit of the element $(q^h;0)_q\in U_{\mathbb A_1}$. As in \cite{HK2002}, we have the following lemma.

\vskip 3mm

\begin{lemma}\
{\rm \begin{enumerate}
\item [(i)] For all $h \in P^{\vee}$, we have $\overline{q^h}=1$.
\item [(ii)] For any $h,h' \in P^{\vee},\ \overline{h+h'}=\overline{h}+\overline{h'}$. Hence, we have $\overline{nh}=n\overline{h}$ for $n\in \Z$.
\end{enumerate}}
\end{lemma}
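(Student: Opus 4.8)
The plan is to read off both parts from the explicit formula $(q^h;0)_q=\dfrac{q^h-1}{q-1}$ together with Lemma~\ref{lemma}(a), which asserts that $(q^h;0)_q$ already lies in $U_{\mathbb A_1}^0\subseteq U_{\mathbb A_1}$ for every $h\in P^{\vee}$. I note first that $\mathbb J_1=(q-1)\mathbb A_1$ is generated by the scalar $q-1$, which is central in $U_{\mathbb A_1}$; hence $\mathbb J_1U_{\mathbb A_1}=(q-1)U_{\mathbb A_1}$ is a two-sided ideal and the classical-limit map $U_{\mathbb A_1}\to U_1=U_{\mathbb A_1}/\mathbb J_1U_{\mathbb A_1}$ is a homomorphism of $\Q$-algebras. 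I will use its multiplicativity and additivity without further comment.

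For (i), I would simply write $q^h-1=(q-1)\,(q^h;0)_q$. Since $q-1\in\mathbb J_1$ and $(q^h;0)_q\in U_{\mathbb A_1}$ by Lemma~\ref{lemma}(a), the right-hand side belongs to $\mathbb J_1U_{\mathbb A_1}$, so $q^h$ and $\mathbf 1$ have the same image in $U_1$; that is, $\overline{q^h}=1$.

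For (ii), the key identity comes from the relation $q^hq^{h'}=q^{h+h'}$ of \eqref{eq:rels}: expanding $q^hq^{h'}-1=q^h(q^{h'}-1)+(q^h-1)$ and dividing by $q-1$ gives $(q^{h+h'};0)_q=q^h\,(q^{h'};0)_q+(q^h;0)_q$ inside $U^0$, an identity which in fact takes place in $U_{\mathbb A_1}^0$ by Lemma~\ref{lemma}(a). Applying the classical-limit homomorphism and using part (i) to replace $\overline{q^h}$ by $1$ yields $\overline{h+h'}=\overline{h'}+\overline{h}$. The remaining assertion $\overline{nh}=n\overline{h}$ then follows by induction on $n\geq 0$, the base case $n=0$ being the observation that $(q^0;0)_q=\dfrac{\mathbf 1-1}{q-1}=0$, hence $\overline{0}=0$; for $n<0$ one uses $\overline{h}+\overline{-h}=\overline{0}=0$.

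I do not expect any real obstacle here: the statement is essentially a formal consequence of the definitions. The only points demanding a bit of care are the verification that all intermediate elements (such as $(q^h;0)_q$ and the product $q^h(q^{h'};0)_q$) genuinely lie in $U_{\mathbb A_1}$ rather than merely in $U_q(\g)$ --- which is exactly the content of Lemma~\ref{lemma}(a) --- and the remark that $\mathbb J_1U_{\mathbb A_1}$ is a two-sided ideal, so that the passage to the quotient is compatible with the algebra structure.
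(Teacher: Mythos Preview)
Your proof is correct and follows the standard argument from \cite{HK2002}, which is exactly what the paper invokes (the paper does not supply its own proof, stating only ``As in \cite{HK2002}, we have the following lemma''). One minor remark: you cite Lemma~\ref{lemma}(a) to ensure $(q^h;0)_q\in U_{\mathbb A_1}^0$, but for $n=0$ this element is already one of the defining generators of $U_{\mathbb A_1}$, so no lemma is needed at that step.
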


\vskip 3mm

 Define the subalgebras $U_1^0=\Q \otimes U_{\mathbb A_1}^0$ and $U_1^\pm=\Q \otimes U_{\mathbb A_1}^\pm$.
 The next theorem shows that we can define a
 surjective homomorphism from the universal enveloping algebra $U(\g)$ to $U_1$,
 and as a $U(\g)$-module,
 $V^1$ is a highest weight module with highest weight $\lambda \in P$ and highest weight vector $\overline{v}_\lambda$.

\vskip 3mm

\begin{theorem}\label{sur}\
{\rm \begin{enumerate}
\item [(a)] The elements $\overline{s}_{il},\ \overline{T}_{il}$$((i,l) \in I^{\infty})$ and $\overline{h}$$(h \in P^{\vee})$ satisfy the defining relations of $U(g)$. Hence there exists a surjective $\Q$-algebra homomorphism $\psi: U(\g)\rightarrow U_1$ sending $e_{il}$ to $\overline{s}_{il}$, $f_{il}$ to $\overline{T}_{il}$, $h$ to
     $\overline{h}$. In particular, the $U_1$-module $V^1$ has a $U{(\g)}$-module structure.
\item [(b)] For each $\mu \in P$, $h \in P^{\vee}$, the element $\overline{h}$ acts on $V_\mu^1$ as scalar multiplication by $\mu(h)$. So $V_\mu^1$ is the $\mu$-weight space of the $U(\g)$-module $V^1$.
\item [(c)] As a $U(\g)$-module, $V^1$ is a highest weight module with highest weight $\lambda \in P$ and highest weight vector $\overline{v}_\lambda$.
\end{enumerate}}
\begin{proof}
(a) Since $\dfrac{K_i^l-K_i^{-l}}{q_i^2-1}=\dfrac{q-1}{q_i^2-1}(1+K_i^{-l})\dfrac{K_i^l-1}{q-1}$, when we take classical limit, we get
$$\overline{\dfrac{K_i^l-K_i^{-l}}{q_i^2-1}}=\dfrac{1}{2r_i}\cdot2\cdot lr_i\overline{h}_i=l\overline{h}_i.$$
By \eqref{2.24}, we have the following equation in $U_1$ $$\overline{s}_{il}\overline{T}_{jk}-\overline{T}_{jk}\overline{s}_{il}=\delta_{ij}\delta_{lk}l\overline{h}_i,$$
and it is the same as the commutation relations in $U(\g)$.
\vskip 2mm
Since we have
$$q^h s_{jl} = q^{l\alpha_j(h)} s_{jl}q^{h}, \ \ q^h T_{jl} = q^{-l\alpha_j(h)} T_{jl}q^{h} \ \ \text{for} \ h \in P^{\vee}, (j,l)\in I^{\infty},$$
we get $\dfrac{q^h-1}{q-1}s_{il}=s_{il}\dfrac{q^{l\alpha_i(h)}q^h-1}{q-1}$ and
\begin{equation}
 \dfrac{q^h-1}{q-1}s_{il}-s_{il}\dfrac{q^h-1}{q-1}=s_{il}\dfrac{q^{l\alpha_i(h)}-1}{q-1}q^h.
\end{equation}
 Thus $\overline{h}\overline{s}_{il}-\overline{s}_{il}\overline{h}=l\alpha_i(h)\overline{s}_{il}$.
 Similarly, we have $$\overline{h} \ \overline{T}_{il}-\overline{T}_{il} \overline{h}=-l\alpha_i(h)\overline{T}_{il}.$$
\vskip 2mm
It is easy to check the commutation relations
\begin{equation}
[\overline{T}_{ik},\overline{T}_{jl}] = [\overline{s}_{ik},\overline{s}_{jl}] =0 \ \ \text{for} \ a_{ij}=0.
\end{equation}

\vskip 2mm
For $i\in I^{\text{re}}$, we have
$$\overline{[n]}_i=n \ \ \text{and}\ \overline{{\begin{bmatrix} n \\ k \end{bmatrix}}}_i=\binom n k.$$
Hence the remaining Serre relations follow.
\vskip 2mm
(b) For $v \in (V_{\mathbb A_1})_\mu$ and $h \in P^{\vee}$, we have $(q^h;0)_qv=\dfrac{q^{\mu(h)-1}}{q-1}v$. Hence when we take the classical limit, we obtain $\overline{h}v=\mu(h)v$.
\vskip 2mm
(c) As a $U(\g)$-module, by (2), we have $h\overline{v}_\lambda=\overline{h}\overline{v}_\lambda=\lambda(h)\overline{v}_\lambda$ in $V^1$ for all $h \in P^{\vee}$. For each $(i,l)\in I^{\infty}$, $s_{il}\overline{v}_\lambda$ is zero. Therefore, $V^1=U_1^-\overline{v}_\lambda=U^-(\g)\overline{v}_\lambda$ and hence $V^1$ is a highest weight module with highest weight $\lambda \in P$ and highest weight vector $\overline{v}_\lambda$.
\end{proof}
\end{theorem}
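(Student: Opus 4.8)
The plan is to establish the three parts in order, the core being the verification in~(a) that the classical limits of the generators satisfy every defining relation of $U(\g)$ from Proposition~\ref{uea}; once this is checked, the homomorphism $\psi$ exists by the universal property of $U(\g)$, and it is surjective because $U_{\mathbb A_1}$ is generated by $s_{il}$, $T_{il}$, $q^h$ and $(q^h;0)_q$, whose classical limits are $\overline{s}_{il}$, $\overline{T}_{il}$, $\overline{q^h}=1$ and $\overline{h}$, so that $U_1$ is already generated by the $\overline{s}_{il}$, $\overline{T}_{il}$ and $\overline{h}$.

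I would go through the defining relations one at a time. The $\h$-commutativity is immediate since $U^0$ is commutative, and the relations $[e_{ik},e_{jl}]=[f_{ik},f_{jl}]=0$ for $a_{ij}=0$ pass to the limit directly. The Cartan relations $\overline{h}\,\overline{s}_{il}-\overline{s}_{il}\,\overline{h}=l\alpha_i(h)\overline{s}_{il}$ and its counterpart for $\overline{T}_{il}$ follow by rewriting the conjugation identities~\eqref{2.26} in terms of $(q^h;0)_q$ and letting $q\to1$, using $\lim_{q\to1}\tfrac{q^n-1}{q-1}=n$ and $\overline{q^h}=1$. The Serre relations transfer because $[n]_i$ and $\binom nk_i$ lie in $\mathbb A_1$ with classical limits $n$ and $\binom nk$, so the quantum Serre relations for the $t_{il}$ and their $\omega$-images for the $s_{il}$ become the classical ones. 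The relation requiring genuine care is $e_{ik}f_{jl}-f_{jl}e_{ik}=k\,\delta_{ij}\delta_{kl}h_i$: starting from~\eqref{news} together with the normalization $T_{il}=\tfrac1{\tau_{il}(q_i^2-1)}t_{il}$, we obtain $s_{il}T_{jk}-T_{jk}s_{il}=\delta_{ij}\delta_{lk}\tfrac{K_i^l-K_i^{-l}}{q_i^2-1}$, the right-hand side lies in $U_{\mathbb A_1}^0$ by Lemma~\ref{lemma}, and factoring $\tfrac{K_i^l-K_i^{-l}}{q_i^2-1}=\tfrac{q-1}{q_i^2-1}(1+K_i^{-l})\tfrac{K_i^l-1}{q-1}$ and taking the limit (with $\overline{q^{h_i}}=1$ and $\overline{h}_i$ read off from $(q^{h_i};0)_q$) gives $l\overline{h}_i$, the desired classical commutator.

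For~(b), on a weight vector $v\in(V_{\mathbb A_1})_\mu$ one has $q^hv=q^{\mu(h)}v$, hence $(q^h;0)_qv=\tfrac{q^{\mu(h)}-1}{q-1}v$, and passing to the classical limit yields $\overline{h}v=\mu(h)v$; together with Proposition~\ref{dim}(a) this shows $V_\mu^1$ is the $\mu$-weight space of $V^1$ regarded as a $U(\g)$-module through $\psi$. For~(c), $V^1=V_{\mathbb A_1}/\mathbb J_1V_{\mathbb A_1}$ is generated over $U_1$, hence over $U(\g)$ via $\psi$, by $\overline{v}_\lambda$; since $V_{\mathbb A_1}=U_{\mathbb A_1}^-v_\lambda$ and $U_1^-$ is the image of $U^-(\g)$, we get $V^1=U^-(\g)\overline{v}_\lambda$; finally $\overline{s}_{il}\overline{v}_\lambda=0$ because $s_{il}$ raises the weight and $V^q$ is a highest weight module, and $h\overline{v}_\lambda=\lambda(h)\overline{v}_\lambda$ by~(b), so $\overline{v}_\lambda$ is a highest weight vector of weight $\lambda$.

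The step I expect to be the main obstacle is ensuring that each identity is already valid \emph{inside $U_{\mathbb A_1}$}, before reduction modulo $\mathbb J_1$: this needs the integrality statements of Lemma~\ref{lemma} and the fact that $[n]_i,\binom nk_i\in\mathbb A_1$, and it relies on the triangular decomposition of $U_{\mathbb A_1}$ from Proposition~\ref{tr} to guarantee that $U_1$ is large enough that the reduction imposes no relations beyond those of $U(\g)$. The genuinely delicate point behind the key commutator is the normalization of $T_{il}$: the factor $\tfrac1{\tau_{il}(q_i^2-1)}$ is exactly what makes $s_{il}T_{jk}-T_{jk}s_{il}$ have a finite nonzero classical limit equal to $l\overline{h}_i$ rather than $0$ or $\infty$.
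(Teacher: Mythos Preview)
Your proposal is correct and follows essentially the same approach as the paper: the same factorization $\tfrac{K_i^l-K_i^{-l}}{q_i^2-1}=\tfrac{q-1}{q_i^2-1}(1+K_i^{-l})\tfrac{K_i^l-1}{q-1}$ for the key commutator, the same rewriting of the conjugation relations in terms of $(q^h;0)_q$ for the Cartan relations, and the same treatment of parts~(b) and~(c). One small remark: your final comment that Proposition~\ref{tr} is needed ``to guarantee that $U_1$ is large enough that the reduction imposes no relations beyond those of $U(\g)$'' overshoots what this theorem claims---part~(a) only asserts a \emph{surjection}, and injectivity is the content of the subsequent theorem, so the triangular decomposition is not actually required here.
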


\vskip 3mm

Combining Proposition \ref{dim} (b) and Theorem \ref{sur} (b), we have $\text{ch}V^1=\text{ch}V^q$. For a dominant integral weight $\lambda \in P^+$, the irreducible highest weight $U_q(\g)$-module $V^q(\lambda)$ has the following property.

\vskip 3mm
\begin{proposition}\label{irr highest}\cite{Kang2019b}
{\rm Let $\lambda \in P^+$ and $V^q(\lambda)$ be the irreducible highest weight module with highest weight $\lambda$ and highest weight vector $v_\lambda$. Then the following statements hold.
\begin{itemize}
\item [(a)] If $i \in I^{\text{re}}$, then $f_i^{\lambda(h_i)+1}v_\lambda=0$.
\item [(b)] If $i \in I^{\text{im}}$ and $\lambda(h_i)=0$, then $f_{ik}v_\lambda=0$ for all $k>0$.
\end{itemize}}
\end{proposition}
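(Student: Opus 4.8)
The plan is to prove both statements by the standard maximal‑vector argument: exhibit the relevant vector as a highest weight vector of weight strictly below $\lambda$, conclude that it generates a proper submodule of $V^q(\lambda)$, and then invoke irreducibility to force that vector to be zero. Throughout I would work with the primitive generators $s_{il},t_{il}$ and the clean commutation relations \eqref{news}, rather than with the complicated relations between $e_{il}$ and $f_{jk}$ of Appendix~A. The key organizational point is that $U^{+}$ is generated as an algebra by the $s_{jl}$ (Proposition~\ref{prim}(1)), so that $U^{+}_{>0}=\sum_{(j,l)}U^{+}s_{jl}$; hence a weight vector $w$ is a highest weight vector as soon as $s_{jl}w=0$ for all $(j,l)\in I^{\infty}$, and then $U_q(\g)w=U^{-}U^{0}U^{+}w=U^{-}w$ has all weights $\le |w|$.

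For (a), fix $i\in I^{\text{re}}$, set $n=\lambda(h_i)\in\Z_{\ge 0}$ and $w=f_i^{\,n+1}v_\lambda=t_{i1}^{\,n+1}v_\lambda$, a vector of weight $\lambda-(n+1)\alpha_i$. For $j\neq i$, relation \eqref{news} gives $[s_{jl},t_{i1}]=0$, so $s_{jl}w=t_{i1}^{\,n+1}s_{jl}v_\lambda=0$ since $v_\lambda$ is a highest weight vector. For $j=i$ (forcing $l=1$) I would iterate $e_if_i-f_ie_i=\tau_{i1}(K_i-K_i^{-1})$ against $v_\lambda$, using $q^hf_iq^{-h}=q_i^{-2}f_i$ for $q^h=K_i$ and $K_iv_\lambda=q_i^{\,n}v_\lambda$; a routine induction on $m$ yields $e_if_i^{\,m}v_\lambda=\tau_{i1}\bigl(\sum_{p=0}^{m-1}(q_i^{\,n-2p}-q_i^{\,2p-n})\bigr)f_i^{\,m-1}v_\lambda$, and for $m=n+1$ the two families $\{n-2p\}_{p=0}^{n}$ and $\{2p-n\}_{p=0}^{n}$ coincide as multisets, so the coefficient vanishes and $s_{i1}w=e_iw=0$. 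Thus $w$ is a highest weight vector, $U_q(\g)w=U^{-}w\subseteq\bigoplus_{\mu<\lambda}V^q(\lambda)_\mu$ is a proper submodule, and irreducibility of $V^q(\lambda)$ forces $w=0$.

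For (b), let $i\in I^{\text{im}}$ with $\lambda(h_i)=0$. I would first show $t_{ik}v_\lambda=0$ for every $k\ge 1$ by the same mechanism: for $j\neq i$, $s_{jl}(t_{ik}v_\lambda)=t_{ik}s_{jl}v_\lambda=0$; and for $j=i$, \eqref{news} gives $s_{il}t_{ik}v_\lambda=t_{ik}s_{il}v_\lambda+\delta_{lk}\tau_{ik}(K_i^{k}-K_i^{-k})v_\lambda$, where $s_{il}v_\lambda=0$ (as $s_{il}$ lies in the positive‑degree part of $U^{+}$) and $(K_i^{k}-K_i^{-k})v_\lambda=(q_i^{\,k\lambda(h_i)}-q_i^{-k\lambda(h_i)})v_\lambda=0$ because $\lambda(h_i)=0$. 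So $t_{ik}v_\lambda$ is a highest weight vector of weight $\lambda-k\alpha_i<\lambda$, hence generates a proper submodule, hence vanishes. Finally $f_{ik}v_\lambda=0$ follows by induction on $k$: by Proposition~\ref{prim}(3), $f_{ik}-t_{ik}$ is a polynomial in the $f_{im}$ with $m<k$ and has no constant term (being homogeneous of nonzero degree), so it annihilates $v_\lambda$ by the inductive hypothesis, while $t_{ik}v_\lambda=0$ by the claim.

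The genuinely delicate points are organizational rather than computational. One must be certain that annihilation by the $s_{jl}$ really does certify maximality — this is exactly where Proposition~\ref{prim}(1) is indispensable, since the $e_{il}$–$f_{jk}$ relations of Appendix~A are far too involved to use directly — and one must use that $V^q(\lambda)$ is genuinely \emph{irreducible}, not merely a highest weight module, for the ``proper submodule is zero'' step. Granting those two facts, the remaining content is the rank‑one ($\mathfrak{sl}_2$‑type) computation in (a) and the one‑line use of $\lambda(h_i)=0$ in (b).
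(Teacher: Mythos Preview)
The paper does not supply its own proof of this proposition; it is quoted from \cite{Kang2019b}. Your argument is correct and is the standard maximal-vector argument one would expect. The decision to work with the primitive generators $s_{jl},t_{jl}$ and the commutation relation \eqref{news}, rather than the $e_{il}$--$f_{jk}$ relations of the Appendix, is exactly the right organizational move, and the two points you single out as delicate---that annihilation by all $s_{jl}$ suffices to certify maximality because $U^{+}_{>0}=\sum_{(j,l)}U^{+}s_{jl}$ via Proposition~\ref{prim}(1), and that genuine irreducibility (not merely the highest-weight property) is needed for the ``proper submodule is zero'' step---are indeed the only places requiring care. The $\mathfrak{sl}_2$-type telescoping computation in (a) and the induction on $k$ passing from $t_{ik}v_\lambda=0$ to $f_{ik}v_\lambda=0$ via Proposition~\ref{prim}(3) in (b) are both sound.
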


\vskip 3mm

We now conclude that the classical limit of the
irreducible highest weight $U_{q}(\g)$-module $V^q(\lambda)$ is isomorphic to the irreducible
highest $U(\g)$-module $V(\lambda)$.

\begin{theorem}
{\rm If $\lambda \in P^+$ and $V^q$ is the irreducible highest weight $U_q(\g)$-module $V^q(\lambda)$ with highest weight $\lambda$, then $V^1$ is isomorphic to the irreducible highest weight module $V(\lambda)$ over $U(\g)$ with highest weight $\lambda$.}
\begin{proof}
By Proposition \ref{irr highest}, if $i\in I^{\text{re}}$, then $T_i^{\lambda(h_i)+1}v_\lambda=0$; if $i \in I^{\text{im}}$ and $\lambda(h_i)=0$, then $T_{ik}v_\lambda=0$ for all $k>0$. Therefore, $V^1$ is a highest weight $U_q(\g)$-module with highest weight $\lambda$ and highest weight vector $\overline{v}_\lambda$ satisfying:
\begin{itemize}
\item [(a)] If $i \in I^{\text{re}}$, then $f_i^{\lambda(h_i)+1}\overline{v}_\lambda=\overline{T}_i^{\lambda(h_i)+1}\overline{v}_\lambda=0$.
\item [(b)] If $i \in I^{\text{im}}$ and $\lambda(h_i)=0$, then $f_{ik}\overline{v}_\lambda=\overline{T}_{ik}\overline{v}_\lambda=0$ for all $k>0$.
\end{itemize}
Hence $V^1\cong V(\lambda)$ by Proposition \ref{prop:hwmodule}.
\end{proof}
\end{theorem}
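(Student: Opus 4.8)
The plan is to combine the universal properties established in the previous results. By Theorem \ref{sur}(a) there is a surjective $\Q$-algebra homomorphism $\psi: U(\g) \to U_1$, and $V^1$ carries a $U(\g)$-module structure through $\psi$; by Theorem \ref{sur}(c), $V^1$ is a highest weight $U(\g)$-module with highest weight $\lambda$ and highest weight vector $\overline{v}_\lambda$. So to invoke Proposition \ref{prop:hwmodule}(b) — which says that any highest weight $U(\g)$-module of highest weight $\lambda \in P^+$ satisfying the relations \eqref{eq:hwmodule} is isomorphic to $V(\lambda)$ — it suffices to verify that $\overline{v}_\lambda$ satisfies $f_i^{\lambda(h_i)+1}\overline{v}_\lambda = 0$ for $i \in I^{\text{re}}$ and $f_{il}\overline{v}_\lambda = 0$ for $(i,l) \in I^\infty$ with $\lambda(h_i) = 0$.

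First I would lift these vanishing statements from the quantum side. Since $\lambda \in P^+$ and $V^q = V^q(\lambda)$ is the irreducible highest weight $U_q(\g)$-module, Proposition \ref{irr highest} gives $f_i^{\lambda(h_i)+1} v_\lambda = 0$ for $i \in I^{\text{re}}$ and $f_{ik} v_\lambda = 0$ for $i \in I^{\text{im}}$, $\lambda(h_i) = 0$, $k > 0$. I then rewrite these in terms of the $\mathbb A_1$-form generators: for $i \in I^{\text{re}}$ one has $t_i = f_i$ (by the convention fixed before Definition \ref{dil}), hence $T_i = \frac{1}{\tau_i}\frac{1}{q_i^2-1} f_i$ is, up to a unit in $\mathbb A_1^\times$ — note $\tau_i \neq 0$ and $q_i^2 - 1$ is a non-unit but the normalization is chosen precisely so that $T_{il} \in U_{\mathbb A_1}$ — a scalar multiple of $f_i$ over $\Q(q)$; and similarly $T_{ik}$ is a $\Q(q)$-scalar multiple of $f_{ik}$ modulo terms $f_{ij}$ with $j < k$, using property (3) of Proposition \ref{prim} together with the definition $T_{ik} = \frac{1}{\tau_{ik}}\frac{1}{q_i^2-1} t_{ik}$. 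The cleanest route is: work inside $V_{\mathbb A_1} = U_{\mathbb A_1}^- v_\lambda$, observe that $T_i^{\lambda(h_i)+1} v_\lambda$ equals a nonzero $\Q(q)$-scalar times $f_i^{\lambda(h_i)+1} v_\lambda = 0$, so $T_i^{\lambda(h_i)+1} v_\lambda = 0$ already in $V_{\mathbb A_1}$; and by induction on $k$ (using $t_{ik} - f_{ik} \in \Q(q)\langle f_{ij} \mid j < k\rangle$) one gets $T_{ik} v_\lambda = 0$ for all $k > 0$ when $\lambda(h_i) = 0$. Applying the classical-limit map $V_{\mathbb A_1} \to V^1$ then yields $\overline{T}_i^{\lambda(h_i)+1}\overline{v}_\lambda = 0$ and $\overline{T}_{ik}\overline{v}_\lambda = 0$, which under $\psi$ are exactly $f_i^{\lambda(h_i)+1}\overline{v}_\lambda = 0$ and $f_{ik}\overline{v}_\lambda = 0$.

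Having verified \eqref{eq:hwmodule} for $V^1$, Proposition \ref{prop:hwmodule}(b) gives a surjection $V(\lambda) \twoheadrightarrow V^1$ of $U(\g)$-modules. For the reverse inequality I would compare characters: Proposition \ref{dim}(b) together with Theorem \ref{sur}(b) gives $\dim_\Q V^1_\mu = \dim_{\Q(q)} V^q_\mu$ for every $\mu$, i.e. $\mathrm{ch}\, V^1 = \mathrm{ch}\, V^q(\lambda)$; and the character of the irreducible $U_q(\g)$-module $V^q(\lambda)$ coincides with that of $V(\lambda)$ (this is the content of the remark "$\mathrm{ch}\,V^1 = \mathrm{ch}\,V^q$" preceding Proposition \ref{irr highest}, combined with the known equality of characters for the generalized setting, or simply because any surjection $V(\lambda)\twoheadrightarrow V^1$ forces $\dim V^1_\mu \le \dim V(\lambda)_\mu$ while the weight multiplicities already agree). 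Hence the surjection $V(\lambda)\to V^1$ is a bijection on each finite-dimensional weight space, so it is an isomorphism. The main obstacle I anticipate is the bookkeeping in the second paragraph: making precise that passing from the $f_{ik}$ to the normalized $T_{ik}$ (and to their classical limits) does not introduce or destroy the required vanishing — one must check that the unit/non-unit structure of the normalizing constants $\frac{1}{\tau_{ik}(q_i^2-1)}$ interacts correctly with the ideal $\mathbb J_1$, i.e. that $T_{ik} v_\lambda$ genuinely vanishes in $V_{\mathbb A_1}$ and not merely in $V^q$ — and to run the induction on $k$ cleanly using Proposition \ref{prim}(3). Everything else is a direct appeal to results already in hand.
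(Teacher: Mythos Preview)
Your proposal is correct and follows essentially the same approach as the paper: verify the relations \eqref{eq:hwmodule} for $\overline{v}_\lambda$ via Proposition~\ref{irr highest} (passing from $f_{ik}$ to $T_{ik}$) and then invoke Proposition~\ref{prop:hwmodule}(b). Two minor remarks: your final character-comparison step is superfluous since Proposition~\ref{prop:hwmodule}(b) already asserts isomorphism (not merely surjection), and your anticipated ``obstacle'' about the normalizing constants is a non-issue because $V_{\mathbb A_1} \subseteq V^q$ as sets, so $T_{ik}v_\lambda = 0$ in $V^q$ is automatically $T_{ik}v_\lambda = 0$ in $V_{\mathbb A_1}$.
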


\vskip 3mm
By Proposition \ref{dim} (b), the character of $V^q(\lambda)$ is the same as the character of $V(\lambda)$, which is given by (see, \cite{Kang2019,BSV2016})
\begin{equation}
\begin{aligned}
\text{ch} V(\lambda) &= \dfrac{\sum_{w \in W} \epsilon (w) e^{w(\lambda +
\rho)-\rho} w (S_{\lambda})}{\prod_{\alpha \in \Delta_{+}} (1-
e^{-\alpha})^{\dim \g_{\alpha}}} \\
&= \dfrac{\sum_{w \in W} \sum_{s \in F_{\lambda}} \epsilon(w)
\epsilon(s) e^{w(\lambda + \rho -s) - \rho}}{\prod_{\alpha \in
\Delta_{+}} (1-e^{-\alpha})^{\dim \g_{\alpha}}}.
\end{aligned}
\end{equation}
\vskip 3mm

\begin{theorem}
{\rm The classical limit $U_1$ of $U_q(\g)$ is isomorphic to the universal enveloping algebra $U(\g)$ as $\Q$-algebras. }
\begin{proof}
By Theorem \ref{sur} (a), we already have an epimorphism $\psi:U(\g)\twoheadrightarrow U_1$ sending $e_{il}$ to $\overline{s}_{il}$, $f_{il}$ to $\overline{T}_{il}$, $h$ to $\overline{h}$, respectively. So it is sufficient to show that $\psi$ is injective.
\vskip 2mm
We first show that the restriction $\psi_0$ of $\psi$ to $U^0(\g)$ is an isomorphism of $U^0(\g)$ onto $U_1^0$. Note that $\psi_0$ is certainly surjective. Since $\chi=\{h_i\mid i\in I\}\cup\{d_i\mid i\in I\}$ is a $\Z$-basis of the free $\Z$-lattice $P^{\vee}$, it is also a $\Q$-basis of the Cartan subalgebra $\h$. Thus any element of $U^0(\g)$ may be written as a polynomial in $\chi$. Suppose $g \in \text{Ker}\psi_0$. Then, for each $\lambda \in P$, we have
$$0=\psi_0(g)\cdot \overline{v}_\lambda=\lambda(g)\overline{v}_\lambda,$$
where $v_\lambda$ is a highest weight vector of a highest weight $U_q(\g)$-module of highest weight $\lambda$ and $\lambda(g)$ denotes the polynomial in $\{\lambda(x)\mid x \in \chi \}$ corresponding to $g$. Hence, we have $\lambda(g)=0$ for every $\lambda \in P$. Since we may take any integer value for $\lambda(x)$$(x\in \chi)$, $g$ must be zero, which implies that $\psi_0$ is injective.
\vskip 2mm
Next, we show that the restriction of $\psi$ to $U^-(\g)$, denote by $\psi_-$, is an isomorphism of $U^-(\g)$ onto $U_1^-$. Suppose $\text{Ker}\psi_-\neq0$, and take a  non-zero element $u=\sum a_\zeta f_\zeta \in\text{Ker}\psi_-$, where $a_\zeta \in \Q$ and $f_\zeta$ are monomials in $f_{il}$'s $(i,l)\in I^{\infty}$. Let $N$ be the maximal length of the monomials $f_\zeta$ in the expression of $u$ and choose a dominant integral weight $\lambda \in P^+$ such that $\lambda(h_i)>N$ for all $i \in I$. The kernel of the $U^-(\g)$-module homomorphism $\varphi:U^-(\g)\rightarrow V^1$ given by $x\mapsto \psi(x)\cdot \overline{v}_\lambda$ is the left ideal of $U^-(\g)$ generated by $f_i^{\lambda(h_i)+1}$$( i\in I^{\text{re}})$ and $f_{il}$ for $i\in I^{\text{im}}$ with  $\lambda(h_i)=0$. Because of the choice of $\lambda$, it is generated by $f_i^{\lambda(h_i)+1}$ for all $i\in I^{\text{re}}$.
\vskip 2mm
 Therefore, $u=\sum a_\zeta f_\zeta \notin \text{Ker} \varphi$. That is, $\psi_-(u)\cdot\overline{v}_\lambda=\psi(u)\cdot\overline{v}_\lambda\neq 0$, which is a contradiction. Therefore, $\text{Ker}\psi_-=0$ and $U^-(\g)$ is isomorphic to $U_1^-$.
 \vskip 2mm
 Similarly, we have $U^+(\g)\cong U_1^+$. Hence, by the triangular decomposition, we have the linear isomorphisms
 $$U(\g)\cong U^-(\g)\otimes U^0(\g) \otimes U^+(\g)\cong U_1^-\otimes U_1^0 \otimes U_1^+\cong U_1,$$
 where the last isomorphism follows from Proposition \ref{tr}. It is easy to see that this isomorphism is actually an algebra isomorphism.
\end{proof}
\end{theorem}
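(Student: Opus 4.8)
The plan is to exploit the surjection $\psi : U(\g) \twoheadrightarrow U_1$ already constructed in Theorem~\ref{sur}(a), so that the whole statement reduces to proving that $\psi$ is injective. For this I would run the argument through the triangular decompositions available on both sides: the Poincar\'e--Birkhoff--Witt decomposition $U(\g) \cong U^-(\g) \otimes U^0(\g) \otimes U^+(\g)$, and the $\mathbb A_1$-form decomposition of Proposition~\ref{tr}, which after applying $(\mathbb A_1/\mathbb J_1)\otimes_{\mathbb A_1}-$ descends to $U_1 \cong U_1^- \otimes U_1^0 \otimes U_1^+$. Since $\psi$ is graded for the $Q$-grading and sends $f_{il}\mapsto \overline T_{il}$, $q^h\mapsto$ (the image of) $\overline h$, $e_{il}\mapsto \overline s_{il}$, it carries each of the three tensor factors of $U(\g)$ into the corresponding factor of $U_1$. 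Hence it suffices to show that the three restrictions $\psi_0=\psi|_{U^0(\g)}$, $\psi_-=\psi|_{U^-(\g)}$ and $\psi_+=\psi|_{U^+(\g)}$ are isomorphisms; the final assembly is then immediate, and the resulting linear isomorphism $U(\g)\to U_1$ is automatically an algebra isomorphism because it agrees on generators with the algebra homomorphism $\psi$.

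For the Cartan part, $U^0(\g)$ is the polynomial algebra on $\h$ with $\Q$-basis furnished by $\chi=\{h_i\mid i\in I\}\cup\{d_i\mid i\in I\}$, and $\psi_0$ is surjective since $U_1^0$ is generated by the $\overline h$. For injectivity I would evaluate on highest weight vectors: if $g\in\Ker\psi_0$, then by Theorem~\ref{sur}(b) the element $g$ acts on $\overline v_\lambda$ (the highest weight vector of the classical limit of a highest weight module of weight $\lambda$) as the scalar $\lambda(g)$, the polynomial in $\{\lambda(x)\mid x\in\chi\}$ determined by $g$. Thus $\lambda(g)=0$ for every $\lambda\in P$, and since the values $\lambda(x)$, $x\in\chi$, range over all tuples of integers, the polynomial $g$ must vanish, i.e. $g=0$.

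For the nilpotent parts, by the symmetry supplied by the involution $\omega$ it is enough to treat $\psi_-$. Assume $\Ker\psi_-\neq 0$ and choose a nonzero homogeneous $u\in\Ker\psi_-$, say of degree $-\beta$ with $N:=\operatorname{ht}(\beta)$, written as a $\Q$-combination of length-$N$ monomials in the $f_{il}$. Pick a sufficiently dominant $\lambda\in P^+$ (with $\lambda(h_i)>N$ for the real indices occurring in $u$ and $\lambda(h_i)>0$ for the imaginary ones), and let $V^1$ be the classical limit of the irreducible $V^q(\lambda)$; by the preceding theorem $V^1\cong V(\lambda)$ as a $U(\g)$-module, so $V^1=U^-(\g)\,\overline v_\lambda$, and by Proposition~\ref{prop:hwmodule} the kernel of the module map $\varphi:U^-(\g)\to V^1$, $x\mapsto\psi(x)\cdot\overline v_\lambda$, is the left ideal generated by the $f_i^{\lambda(h_i)+1}$ ($i\in I^{\mathrm{re}}$) together with the $f_{il}$ for $i\in I^{\mathrm{im}}$ with $\lambda(h_i)=0$ --- and the choice of $\lambda$ removes the latter generators and pushes the former into degrees of height strictly greater than $N$. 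Consequently the degree-$(-\beta)$ component of $\Ker\varphi$ is zero, so $\varphi(u)\neq 0$; but $\psi_-(u)=0$ forces $\varphi(u)=\psi(u)\cdot\overline v_\lambda=0$, a contradiction. Hence $\psi_-$ is injective, and $\psi_+$ likewise.

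The step I expect to be the genuine obstacle --- and where the earlier machinery really earns its keep --- is the injectivity of $\psi_-$: it depends on knowing that the classical limit of $V^q(\lambda)$ is honestly $V(\lambda)$ (hence on the rank/dimension comparison of Proposition~\ref{dim} together with the character identity) and on the precise list of defining relations of a highest weight $U(\g)$-module, so that for a suitably dominant $\lambda$ no cancellation among the length-$N$ monomials of $u$ can occur after applying $\varphi$. Once $\psi_0$, $\psi_-$ and $\psi_+$ are all shown to be isomorphisms, combining them through the two triangular decompositions and Proposition~\ref{tr} yields the linear isomorphism $U(\g)\cong U_1^-\otimes U_1^0\otimes U_1^+\cong U_1$, which as noted above is an algebra isomorphism.
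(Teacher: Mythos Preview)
Your proposal is correct and follows essentially the same route as the paper's proof: reduce to injectivity via the surjection of Theorem~\ref{sur}(a), handle $\psi_0$ by evaluating on highest weight vectors, handle $\psi_-$ (and symmetrically $\psi_+$) by choosing a sufficiently dominant $\lambda$ so that $u$ falls outside the defining ideal of $V(\lambda)$, and then assemble via the two triangular decompositions and Proposition~\ref{tr}. One small slip: a homogeneous $u$ of degree $-\beta$ with $N=\operatorname{ht}(\beta)$ is a combination of monomials of length \emph{at most} $N$, not exactly $N$ (e.g.\ $f_{i,2}$ versus $f_{i,1}^2$ for imaginary $i$), but your height argument only needs the bound $\le N$, so the logic is unaffected.
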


 \vskip 3mm
 We now show that $U_1$ inherits a Hopf algebra structure from that of $U_q(\g)$. It suffices to show that $U_{\mathbb A_1}$ inherits the Hopf algebra structure from that of $U_q(\g)$. Since we have
\begin{equation}\label{hopf}
\begin{aligned}
&\Delta(T_{il})=T_{il}\otimes 1+K_i^l\otimes T_{il}, \ \Delta(s_{il})=s_{il}\otimes K_i^{-l}+ 1\otimes s_{il},\\
&\Delta(q^h)=q^h\otimes q^h,\\
&S(T_{il})=-K_i^{-l}T_{il},\ S(s_{il})=-s_{il}K_i^{l},\ S(q^h)=q^{-h},\\
&\epsilon(T_{il})=\epsilon(s_{il})=0,\ \epsilon(q^h)=1,
\end{aligned}
\end{equation}
we get
 \begin{equation}\label{hopf1}
\begin{aligned}
&\Delta((q^h;0)_q)=\frac{q^h\otimes q^h-1\otimes1}{q-1}=(q^h;0)_q\otimes1+q^h\otimes (q^h;0)_q,\\
& S((q^h;0)_q)=(q^{-h};0)_q,\\
&\epsilon((q^h;0)_q)=0.
\end{aligned}
\end{equation}
Hence the maps $\Delta\colon U_{\mathbb A_1}\rightarrow U_{\mathbb A_1}\otimes U_{\mathbb A_1}$, $\epsilon\colon U_{\mathbb A_1}\rightarrow \mathbb A_1$, and
$S\colon U_{\mathbb A_1}\rightarrow U_{\mathbb A_1}$ are all well-defined and $U_{\mathbb A_1}$ inherits a Hopf algebra structure from that of $U_q(\g)$.

\vskip 3mm

Let us show that the Hopf algebra structure of $U_1$ coincides with that of $U(\g)$ under the isomorphism we have been considering. Taking the classical limit of the equations in \eqref{hopf} and in \eqref{hopf1}, we have
\begin{equation}
\begin{aligned}
&\Delta(\overline{T}_{il})=\overline{T}_{il}\otimes 1+1\otimes \overline{T}_{il}, \ \Delta(\overline{s}_{il})=\overline{s}_{il}\otimes 1+ 1\otimes \overline{s}_{il},\ \Delta(\overline{h})=\overline{h}\otimes 1+1\otimes \overline{h},\\
&S(\overline{T}_{il})=-\overline{T}_{il},\ S(\overline{s}_{il})=-\overline{s}_{il},\ S(\overline{h})=-\overline{h},\\
&\epsilon(\overline{T}_{il})=\epsilon(\overline{s}_{il})=\epsilon(\overline{h})=0.
\end{aligned}
\end{equation}
This coincides with \eqref{str}. Therefore, we have the following corollary.
\vskip 3mm

\begin{corollary}
{\rm The classical limit $U_1$ of $U_q(\g)$ inherits a Hopf algebra structure from that of $U_q(\g)$ so that $U_1$  and $U(\g)$ are isomorphic as Hopf algebras over $\Q$.
}
\end{corollary}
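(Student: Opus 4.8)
The plan is to bootstrap from the $\Q$-algebra isomorphism $\psi\colon U(\g)\xrightarrow{\sim}U_1$ obtained in the previous theorem and upgrade it to an isomorphism of Hopf algebras. The first step is to verify that the comultiplication $\Delta$, the antipode $S$ and the counit $\epsilon$ of $U_q(\g)$ restrict to the $\mathbb A_1$-form, i.e.\ that $\Delta(U_{\mathbb A_1})\subseteq U_{\mathbb A_1}\otimes_{\mathbb A_1}U_{\mathbb A_1}$, $S(U_{\mathbb A_1})\subseteq U_{\mathbb A_1}$ and $\epsilon(U_{\mathbb A_1})\subseteq\mathbb A_1$. Since $U_{\mathbb A_1}$ is generated by $s_{il}$, $T_{il}$, $q^h$ and $(q^h;0)_q$, it suffices to evaluate the structure maps on these generators. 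For $s_{il}$ and $q^h$ one reads off Proposition \ref{prim}(6)--(7) directly; for $T_{il}=\frac{1}{\tau_{il}}\frac{1}{q_i^2-1}t_{il}$ one divides the coproduct of $t_{il}$ by the $\Q(q)$-scalar $\tau_{il}(q_i^2-1)$, getting $\Delta(T_{il})=T_{il}\otimes1+K_i^l\otimes T_{il}$, which lies in $U_{\mathbb A_1}\otimes U_{\mathbb A_1}$ because $q^{h}\in U_{\mathbb A_1}$; and for $(q^h;0)_q$ the short computation recorded in \eqref{hopf1} gives $\Delta((q^h;0)_q)=(q^h;0)_q\otimes1+q^h\otimes(q^h;0)_q$, $S((q^h;0)_q)=(q^{-h};0)_q$ and $\epsilon((q^h;0)_q)=0$, all inside $U_{\mathbb A_1}$ by Lemma \ref{lemma}. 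This shows $U_{\mathbb A_1}$ is a Hopf algebra over $\mathbb A_1$.

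The second step is to apply the base change $(\mathbb A_1/\mathbb J_1)\otimes_{\mathbb A_1}-$. Base change along a ring homomorphism carries Hopf algebras to Hopf algebras and Hopf morphisms to Hopf morphisms, so $U_1$ inherits a Hopf algebra structure over $\Q$ and the reduction map $U_{\mathbb A_1}\to U_1$ respects $\Delta$, $S$, $\epsilon$; in particular $\Delta_{U_1}(\overline x)=\overline{\Delta(x)}$, and likewise for $S$ and $\epsilon$. Taking classical limits of the formulas from the first step and using $\overline{q^h}=1$, hence $\overline{K_i^l}=1$, one obtains $\Delta(\overline T_{il})=\overline T_{il}\otimes1+1\otimes\overline T_{il}$, $\Delta(\overline s_{il})=\overline s_{il}\otimes1+1\otimes\overline s_{il}$, $\Delta(\overline h)=\overline h\otimes1+1\otimes\overline h$, together with $S(\overline T_{il})=-\overline T_{il}$, $S(\overline s_{il})=-\overline s_{il}$, $S(\overline h)=-\overline h$ and $\epsilon$ vanishing on all of them.

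The third step is to compare with the Hopf structure \eqref{str} of $U(\g)$. The isomorphism $\psi$ sends the algebra generators $e_{il},f_{il},h$ of $U(\g)$ to $\overline s_{il},\overline T_{il},\overline h$, and by the previous step the comultiplication, antipode and counit of $U_1$ take on these images precisely the primitive (resp.\ antipodal, resp.\ zero) values prescribed by \eqref{str}. Since $\Delta$, $S$, $\epsilon$ are algebra (anti)homomorphisms and the elements in question generate, $\psi$ intertwines $(\Delta,S,\epsilon)$ on $U(\g)$ with $(\Delta_{U_1},S_{U_1},\epsilon_{U_1})$ on $U_1$; combined with the fact that $\psi$ is already an isomorphism of $\Q$-algebras, this makes $\psi$ an isomorphism of Hopf algebras over $\Q$.

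The step I expect to be the main obstacle is the first one, namely showing that the chosen integral form $U_{\mathbb A_1}$ is genuinely closed under all three structure maps. The delicate points are that the renormalised negative generators $T_{il}$ carry the denominator $\tau_{il}(q_i^2-1)$, which must be checked not to interfere with the scalars appearing in $\Delta(t_{il})$, and that $(q^h;0)_q$ — the element responsible for $\overline h$ — has a coproduct landing in $U_{\mathbb A_1}\otimes U_{\mathbb A_1}$ rather than only in $U_q(\g)\otimes U_q(\g)$; both are settled by the explicit formulas \eqref{hopf}--\eqref{hopf1} together with Lemma \ref{lemma}, but they are exactly the place where the particular choice of $\mathbb A_1$-form has to be seen to be the correct one. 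Once this closure is in hand, the remainder is formal base change and matching of generators.
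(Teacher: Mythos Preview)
Your proposal is correct and follows essentially the same route as the paper: the argument immediately preceding the Corollary checks \eqref{hopf} and \eqref{hopf1} on the generators $s_{il}$, $T_{il}$, $q^h$, $(q^h;0)_q$ to see that $U_{\mathbb A_1}$ is closed under $\Delta$, $S$, $\epsilon$, then takes classical limits and compares with \eqref{str} to conclude that $\psi$ is a Hopf isomorphism. Your write-up is slightly more explicit about the base-change step and about why matching on generators suffices, but the content is the same.
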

 \vskip 3mm
 Since $U^-(\g)\cong U_1^-$, by the same argument in \cite[Theorem 3.4.10]{HK2002}, we have the following theorem when we take the classical limit on the Verma module over $U_q(\g)$.
\vskip 3mm

\begin{theorem} \cite{HK2002}
{\rm If $\lambda \in P$ and $V^q$ is the Verma module $M^q(\lambda)$ over $U_q(\g)$ with highest weight $\lambda$, then its classical limit $V^1$ is isomorphic to the Verma module $M(\lambda)$ over $U(\g)$ with highest weight $\lambda$.}
\end{theorem}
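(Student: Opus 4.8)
The plan is to produce a surjection $\pi\colon M(\lambda)\twoheadrightarrow V^1$ of $U(\g)$-modules from the universal property of the Verma module, and then to recognize $\pi$ as an isomorphism by observing that both sides are free of rank one over $U^-(\g)$. This runs parallel to \cite[Theorem~3.4.10]{HK2002}, now that the structural results of the previous sections are in place.

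\emph{Step 1 (the surjection).} By Theorem~\ref{sur}(c), $V^1$ is a highest weight $U(\g)$-module with highest weight $\lambda$ and highest weight vector $\overline{v}_\lambda$; moreover $\overline{h}\cdot\overline{v}_\lambda=\lambda(h)\overline{v}_\lambda$ for all $h\in P^\vee$ by Theorem~\ref{sur}(b), and $\overline{s}_{il}\cdot\overline{v}_\lambda=0$ for all $(i,l)\in I^\infty$. Since $M(\lambda)$ is universal among $U(\g)$-modules generated by a highest weight vector of weight $\lambda$, there is a unique homomorphism of $P$-graded $U(\g)$-modules $\pi\colon M(\lambda)\to V^1$ with $\pi(v_\lambda)=\overline{v}_\lambda$, and $\pi$ is surjective because $V^1=U^-(\g)\,\overline{v}_\lambda$.

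\emph{Step 2 ($V^1$ is free of rank one over $U^-(\g)$).} By the triangular decomposition of $U_q(\g)$ (Theorem~\ref{tdc}), the map $U^-\to M^q(\lambda)$, $u\mapsto u\,v_\lambda$, is an isomorphism of left $U^-$-modules. Its restriction $U_{\mathbb A_1}^-\to V_{\mathbb A_1}$, $u\mapsto u\,v_\lambda$, is then injective (its $\Q(q)$-extension is) and surjective (by the very definition $V_{\mathbb A_1}=U_{\mathbb A_1}^-v_\lambda$), hence an isomorphism of left $U_{\mathbb A_1}^-$-modules. Applying $(\mathbb A_1/\mathbb J_1)\otimes_{\mathbb A_1}-$ and invoking the algebra isomorphism $U^-(\g)\cong U_1^-$ from the preceding theorem, we obtain an isomorphism of $U^-(\g)$-modules $V^1\cong U_1^-\cong U^-(\g)$ sending $\overline{v}_\lambda$ to $1$.

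\emph{Step 3 (conclusion and main obstacle).} By the triangular decomposition of $U(\g)$ we likewise have $M(\lambda)\cong U^-(\g)$ as left $U^-(\g)$-modules, with $v_\lambda\mapsto 1$. Thus $\pi$ is a surjective homomorphism between free rank-one $U^-(\g)$-modules carrying a free generator to a free generator, hence an isomorphism, and $V^1\cong M(\lambda)$ as $U(\g)$-modules. This is consistent with the dimension count $\dim_\Q V^1_\mu=\dim_{\Q(q)}M^q(\lambda)_\mu$ of Proposition~\ref{dim}(b). The only delicate point is Step~2: a priori, passing to the $\mathbb A_1$-form and then specializing $q\to1$ could collapse the Verma module. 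That it does not is guaranteed by the freeness of $M^q(\lambda)$ over $U^-$ (Theorem~\ref{tdc}) together with Proposition~\ref{tr}, which ensure that $V_{\mathbb A_1}$ is a free $\mathbb A_1$-module whose weight spaces have the expected ranks, so that no new relations appear under specialization.
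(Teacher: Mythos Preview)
Your proposal is correct and follows precisely the approach the paper intends: the paper does not give a self-contained proof but simply invokes the isomorphism $U^-(\g)\cong U_1^-$ established in the preceding theorem and refers to \cite[Theorem~3.4.10]{HK2002} for the remaining argument, which is exactly the free-rank-one comparison you carry out in Steps~1--3.
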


\clearpage
\appendix

\section{}
\vskip 2mm

We shall provide an explicit commutation relations for $e_{ik}$ and $f_{jl}$, for $(i,k),(j,l)\in I^{\infty}$ in $U_q(\g)$. Recall that, we have the co-multiplication formulas
$$\Delta(f_{il}) = \sum_{m+n=l} q_{(i)}^{-mn}f_{im}K_{i}^{n}\otimes f_{in}.$$
Then, the defining relation \eqref{drinfeld} yields the following lemma.
\vskip 3mm

\begin{lemma}\cite{Kang2019b}
{\rm For any $i,j\in I$ and $k,l\in\Z_{>0}$, we have
 \begin{itemize}
\item [(a)] If $i\neq j$, then $e_{ik}$ and $f_{jl}$ are commutative.
\item [(b)] If $i=j$, we have the following relations in $U_q(\g)$ for all $k,l>0$
\begin{equation}\label{ij}
\sum_{\substack{m+n=k \\ n+s=l}}q_{(i)}^{n(m-s)}\nu_{in}e_{is}f_{im}K_i^{-n}=\sum_{\substack{m+n=k \\ n+s=l}}q_{(i)}^{-n(m-s)}\nu_{in}f_{im}e_{is}K_i^{n}.
\end{equation}
 \end{itemize}}
\end{lemma}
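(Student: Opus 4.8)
The plan is to read the lemma off the Drinfeld--double relation \eqref{drinfeld}. Since $f_{ik}$ and $f_{jl}$ lie in $\widehat{U}^{\le 0}$, I would substitute $a=f_{ik}$ and $b=f_{jl}$ into
$$\sum(a_{(1)},b_{(2)})_L\,\omega(b_{(1)})\,a_{(2)}=\sum(a_{(2)},b_{(1)})_L\,a_{(1)}\,\omega(b_{(2)}),$$
and expand both coproducts using \eqref{eq:comult}, so that $\Delta(f_{ik})=\sum_{m+n=k}q_{(i)}^{-mn}f_{im}K_i^{n}\otimes f_{in}$ and $\Delta(f_{jl})=\sum_{p+p'=l}q_{(j)}^{-pp'}f_{jp}K_j^{p'}\otimes f_{jp'}$. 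Each side of \eqref{drinfeld} then becomes a finite double sum whose generic summand is a scalar times $(f_{im}K_i^{n},f_{jp'})_L\,\omega(f_{jp}K_j^{p'})\,f_{in}$ on the left, and the mirror expression on the right; as \eqref{drinfeld} is imposed already in $\tilde{U}$, the identity so obtained holds a fortiori in the quotient $U_q(\g)$.

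The crux is the pairing evaluation $(f_{im}K_i^{n},f_{jp'})_L=(f_{im},f_{jp'})_L$, which I would deduce from compatibility of $(\ ,\ )_L$ with the comultiplication in either argument (the Hopf--pairing axioms), together with $(q^{h},f_{jl})_L=0$ and $(q^{h},1)_L=1$ --- the factor $K_i^{n}$ contributes nothing because its only surviving pairing partner is $1$. The right-hand side equals $\nu_{im}$ when $i=j$ and $m=p'\ge 1$, equals $1$ when $m=p'=0$, and vanishes otherwise, by $(f_{il},f_{il})_L=\nu_{il}$, $(1,1)_L=1$ and the vanishing of $(\ ,\ )_L$ between homogeneous elements of distinct $Q$-degree --- this is where linear independence of the simple roots enters. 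When $i\ne j$ the only non-vanishing pairings come from $f_{i0}=f_{j0}=1$, so \eqref{drinfeld} collapses to $e_{jl}f_{ik}=f_{ik}e_{jl}$, which is part (a). When $i=j$, substituting the pairing formula back reduces each double sum to a single sum over $n$, with $m=k-n$ and $s=l-n$, matching the index set of \eqref{ij}.

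For part (b) it then remains to apply $\omega$ --- replacing $f_{ip}$ by $e_{ip}$ and $K_i^{p'}$ by $K_i^{-p'}$ --- and to slide the remaining torus factors $K_i^{\pm n}$ to the far right past the $f_{im}$'s via $q^{h}f_{il}q^{-h}=q^{-l\alpha_i(h)}f_{il}$ and $q^{h}e_{il}q^{-h}=q^{l\alpha_i(h)}e_{il}$ from \eqref{eq:rels}. Since $q^{r_i\alpha_i(h_i)}=q^{r_ia_{ii}}=q_{(i)}^{\,2}$, these conjugations produce precisely the coefficients $q_{(i)}^{\pm n(m-s)}$ in \eqref{ij}, and one checks that the left- and right-hand sides of \eqref{drinfeld} become the left- and right-hand sides of \eqref{ij}, respectively. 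I expect the one genuine difficulty to be purely computational: fixing the correct conventions for the Hopf pairing (which comultiplication, and on which tensor leg $\omega$ is inserted) and then carrying all powers of $q_{(i)}$ and the positions of the factors $K_i^{\pm n}$ faithfully through the double sums. Once the pairing identity above is established, no further idea is needed; the required properties of the extended form $(\ ,\ )_L$ on $\widehat{U}$ are exactly those developed by Bozec \cite{Bozec2014b} and used in \cite{Kang2019b}.
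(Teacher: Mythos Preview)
Your proposal is correct and follows precisely the route the paper indicates: the paper does not spell out a proof but simply states that the lemma is obtained by specializing the Drinfeld--double relation \eqref{drinfeld} (citing \cite{Kang2019b}), and your argument is exactly that specialization with $a=f_{ik}$, $b=f_{jl}$, carried through in detail. The pairing reduction $(f_{im}K_i^{n},f_{jp'})_L=(f_{im},f_{jp'})_L$ and the subsequent index bookkeeping are handled correctly, so nothing further is needed.
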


\vskip 3mm
Since we have
$$K_i^ne_{im}K_i^{-n}=q_{(i)}^{2nm}e_{im},$$
$$K_i^nf_{im}K_i^{-n}=q_{(i)}^{-2nm}f_{im}.$$
We can modify the equations \eqref{ij} as the following form
\begin{equation}\label{mn}
\sum_{\substack{m+n=k \\ n+s=l}}q_{(i)}^{n(s-m)}\nu_{in}K_i^{-n}e_{is}f_{im}=\sum_{\substack{m+n=k \\ n+s=l}}q_{(i)}^{n(m-s)}\nu_{in}K_i^{n}f_{im}e_{is}.
\end{equation}
\vskip 3mm

If $i\in I^{\text{re}}$, then $k=l=1$ and $m=s$, so there are only one commutation relation in this case
\begin{equation}
e_if_i+\nu_{i1}K_i^{-1}=f_ie_i+\nu_{i1}K_i.
\end{equation}
\vskip 3mm

If $i\in I^{\text{im}}$ (we omit the notation $``i"$ in this case for simplicity), we first assume that $k=l$. By \eqref{mn}, we have
\begin{equation}
\begin{aligned}
& k=l=1,\quad e_1f_1+\nu_{1}K^{-1}=f_1e_1+\nu_{1}K,\\
& k=l=2,\quad e_2f_2+\nu_{1}K^{-1}e_1f_1+\nu_{2}K^{-2}=f_2e_2+\nu_{1}Kf_1e_1+\nu_2K^2,\\
&\qquad \qquad  \qquad \qquad  \ \ \ \ \cdots \\
& k=l=n,\quad e_nf_n+\nu_{1}K^{-1}e_{n-1}f_{n-1}+\cdots+\nu_{n-1}K^{1-n}e_1f_1+\nu_nK^{-n}\\
&\phantom{k=l=n,\quad} =f_n e_n+\nu_{1}K f_{n-1}e_{n-1}+\cdots+\nu_{n-1}K^{n-1}f_1 e_1+\nu_nK^{n}.\\
\end{aligned}
\end{equation}
\vskip 3mm
By direct calculation, we can write $e_nf_n-f_ne_n$ in the following way
$$e_nf_n-f_ne_n=\alpha_1f_{n-1} e_{n-1}+\alpha_2f_{n-2} e_{n-2}+\cdots+\alpha_{n-1}f_1e_{1}+\alpha_n,$$
where
\begin{equation}
\begin{aligned}
& \alpha_1=\nu_1(K-K^{-1}),\\
& \alpha_2=\nu_2(K^2-K^{-2})-\nu_1K^{-1}\alpha_1=\nu_2(K^2-K^{-2})-\nu_1^2K^{-1}(K-K^{-1}),\\
& \alpha_3=\nu_3(K^3-K^{-3})-\nu_1K^{-1}\alpha_2-\nu_2K^{-2}\alpha_1\\
& \phantom{\alpha_3}=\nu_3(K^3-K^{-3})-\nu_1\nu_2K^{-1}(K^2-K^{-2})+(\nu_1^3-\nu_1\nu_2)K^{-2}(K-K^{-1}),\\
& \qquad \qquad  \qquad \qquad  \ \ \ \ \cdots \\
& \alpha_n=\nu_n(K^n-K^{-n})-\nu_1K^{-1}\alpha_{n-1}-\nu_2K^{-2}\alpha_{n-2}-\cdots-\nu_{n-1}K^{-(n-1)}\alpha_1.
\end{aligned}
\end{equation}

\vskip 3mm

If $m\in \N$ and $\mathbf c=(c_1,\cdots,c_d)$ is a composition of $m$ (i.e. $\mathbf c\in \mathcal C_m$), then we set
$\nu_{\mathbf c}=\prod_{k=1}^{d} \nu_k$ and $\lVert \mathbf c \rVert=d$.

\vskip 3mm
By induction, we have
\begin{equation}\label{k=l}
e_nf_n=\sum_{p=1}^{n}\left\{\sum_{r=1}^{p}\left[\nu_r\vartheta_{p-r}K^{r-p}(K^r-K^{-r})\right]\right\}f_{n-p}e_{n-p}+f_ne_n,
\end{equation}
where $\vartheta_{m}=\sum_{\mathbf c \in \mathcal C_m}(-1)^{\lVert \mathbf c \rVert}\nu_{\mathbf c}$. For example, $\vartheta_{4}=\nu_1^4-3\nu_1^2\nu_2+2\nu_1\nu_3+\nu_2^2-\nu_4$.
\vskip 3mm

Next, we assume that $k-l=t$, then $m-s=t$. By \eqref{mn}, we get
$$\sum_{n=0}^{l}q_{(i)}^{-nt}\nu_{n}K^{-n}e_{l-n}f_{k-n}=\sum_{n=0}^{l}q_{(i)}^{nt}\nu_{n}K^{n}f_{k-n}e_{l-n}.$$
Hence, we have
\begin{equation*}
\begin{aligned}
& e_lf_k+q_{(i)}^{-t}\nu_1K^{-1}e_{l-1}f_{k-1}+\cdots+q_{(i)}^{-(l-1)t}\nu_{l-1}K^{-(l-1)}e_{1}f_{t+1}
+q_{(i)}^{-lt}\nu_{l}K^{-l}f_{t}\\
&= f_k e_l+q_{(i)}^{t}\nu_1Kf_{k-1}e_{l-1}+\cdots+q_{(i)}^{(l-1)t}\nu_{l-1}K^{(l-1)}f_{t+1}e_{1}
+q_{(i)}^{lt}\nu_{l}K^{l}f_{t}.
\end{aligned}
\end{equation*}
We substitute $K$ by $q_{(i)}^tK$ in formula \eqref{k=l} and obtain
\begin{equation}\label{k-l=t}
e_lf_k=\sum_{p=1}^{l}\left\{\sum_{r=1}^{p}\left[\nu_r\vartheta_{p-r}(q_{(i)}^tK)^{r-p}((q_{(i)}^tK)^r-(q_{(i)}^tK)^{-r})\right]\right\}f_{k-p}e_{l-p}+f_ke_l.
\end{equation}

\vskip 3mm

Finally, we assume that $l-k=t$, then $s-m=t$. By \eqref{mn}, we get
$$\sum_{n=0}^{k}q_{(i)}^{nt}\nu_{n}K^{-n}e_{l-n}f_{k-n}=\sum_{n=0}^{k}q_{(i)}^{-nt}\nu_{n}K^{n}f_{k-n}e_{l-n}.$$
Hence, we have
\begin{equation*}
\begin{aligned}
& e_lf_k+q_{(i)}^{t}\nu_1K^{-1}e_{l-1}f_{k-1}+\cdots+q_{(i)}^{(l-1)t}\nu_{l-1}K^{-(l-1)}e_{t+1}f_{1}
+q_{(i)}^{lt}\nu_{l}K^{-l}e_{t}\\
&= f_k e_l+q_{(i)}^{-t}\nu_1Kf_{k-1}e_{l-1}+\cdots+q_{(i)}^{-(l-1)t}\nu_{l-1}K^{(l-1)}f_{1}e_{t+1}
+q_{(i)}^{-lt}\nu_{l}K^{l}e_{t}.
\end{aligned}
\end{equation*}
We substitute $K$ by $q_{(i)}^{-t}K$ in formula \eqref{k=l} and obtain
\begin{equation}\label{l-k=t}
e_lf_k=\sum_{p=1}^{k}\left\{\sum_{r=1}^{p}\left[\nu_r\vartheta_{p-r}(q_{(i)}^{-t}K)^{r-p}((q_{(i)}^{-t}K)^r-(q_{(i)}^{-t}K)^{-r})\right]\right\}f_{k-p}e_{l-p}+f_ke_l.
\end{equation}

\vskip 3mm
Combine the formulas \eqref{k=l}, \eqref{k-l=t}, and \eqref{l-k=t}, we have the following statement.

\vskip 3mm

\begin{proposition}
{\rm For $i \in I^{\text{im}}$, we have the following commutation relations for all $k,l>0$
\begin{equation}
e_{il}f_{ik}-f_{ik}e_{il}=\sum_{p=1}^{\text{min}\{k,l\}}\left\{\sum_{r=1}^{p}
\left[\nu_{ir}\vartheta_{i,p-r}(q_{(i)}^{k-l}K_i)^{r-p}((q_{(i)}^{k-l}K_i)^r-(q_{(i)}^{k-l}K_i)^{-r})\right]\right\}f_{i,k-p}e_{i,l-p}.
\end{equation}
Where $\vartheta_{i,p-r}=\sum_{\mathbf c \in \mathcal C_{p-r}}(-1)^{\lVert \mathbf c \rVert}\nu_{i\mathbf c}$}.
\end{proposition}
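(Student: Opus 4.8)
The plan is to derive the three families of identities \eqref{k=l}, \eqref{k-l=t}, \eqref{l-k=t} that precede the statement and then simply assemble them. Everything rests on the defining relation \eqref{drinfeld}: via the co-multiplication formula for $\Delta(f_{il})$ it produces the Lemma, hence the relations \eqref{ij}, and after conjugating by powers of $K_i$ (using $K_i^n e_{im}K_i^{-n}=q_{(i)}^{2nm}e_{im}$ and $K_i^n f_{im}K_i^{-n}=q_{(i)}^{-2nm}f_{im}$) the cleaner form \eqref{mn}. Part (a) of the Lemma disposes of the case $i\neq j$, and for $i\in I^{\text{re}}$ one has $k=l=1$, $m=s$, so \eqref{mn} is a single relation. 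Thus the real content is the case $i\in I^{\text{im}}$, which I would split into $k=l$, $k-l=t>0$, and $l-k=t>0$.

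First I would treat the diagonal case $k=l=n$. Reading off \eqref{mn} for $k=l=1,2,\dots,n$ and isolating $e_nf_n-f_ne_n$ gives an expression $\sum_{p=1}^{n}\alpha_p\, f_{n-p}e_{n-p}$ (with the convention $e_0=f_0=1$), where $\alpha_p\in U^0$ is determined by $\alpha_1=\nu_1(K-K^{-1})$ and the recursion $\alpha_p=\nu_p(K^p-K^{-p})-\sum_{r=1}^{p-1}\nu_r K^{-r}\alpha_{p-r}$. The crux is to solve this recursion in closed form: I claim $\alpha_p=\sum_{r=1}^{p}\nu_r\vartheta_{p-r}K^{r-p}(K^r-K^{-r})$, where $\vartheta_m=\sum_{\mathbf c\in\mathcal C_m}(-1)^{\lVert\mathbf c\rVert}\nu_{\mathbf c}$ and $\vartheta_0=1$. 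This is proved by induction on $p$; the inductive step reduces to the combinatorial identity $\vartheta_m=-\sum_{r=1}^{m}\nu_r\vartheta_{m-r}$ for $m\geq 1$, which follows by sorting the compositions of $m$ according to the size $r$ of their first part (removing the first part multiplies $\nu_{\mathbf c}$ by $\nu_r^{-1}$ and flips the sign $(-1)^{\lVert\mathbf c\rVert}$). Substituting the closed form back yields \eqref{k=l}.

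Next I would handle the off-diagonal cases. When $k-l=t>0$, every term of \eqref{mn} has $m-s=t$, so the relation reads $\sum_{n=0}^{l}q_{(i)}^{-nt}\nu_n K^{-n}e_{l-n}f_{k-n}=\sum_{n=0}^{l}q_{(i)}^{nt}\nu_n K^{n}f_{k-n}e_{l-n}$, which is \emph{formally the diagonal relation with $n=l$} once each $K^{n}$ is replaced by $(q_{(i)}^{t}K)^{n}$. Hence running the same recursion with $K$ replaced by $q_{(i)}^{t}K$ gives \eqref{k-l=t}. The case $l-k=t>0$ is the mirror image and produces \eqref{l-k=t}, i.e.\ the same formula with $K$ replaced by $q_{(i)}^{-t}K$. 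Since $t=|k-l|$, in all three cases the shift is uniformly by $q_{(i)}^{k-l}K_i$ and the outer sum runs to $\min\{k,l\}$, so combining \eqref{k=l}, \eqref{k-l=t}, \eqref{l-k=t} gives exactly the asserted formula for $e_{il}f_{ik}-f_{ik}e_{il}$.

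The main obstacle is the closed-form evaluation of the coefficients $\alpha_p$, that is, guessing and then verifying by induction the identity $\vartheta_m=-\sum_{r=1}^{m}\nu_r\vartheta_{m-r}$; once this combinatorial lemma is in hand, the reduction of the non-diagonal cases to the diagonal one is a routine substitution $K\mapsto q_{(i)}^{\pm t}K$, and the final statement is just the bookkeeping that packages the three cases together.
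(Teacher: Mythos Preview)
Your proposal is correct and follows essentially the same route as the paper: derive \eqref{mn} from the Lemma, solve the diagonal case $k=l$ by the recursion $\alpha_p=\nu_p(K^p-K^{-p})-\sum_{r=1}^{p-1}\nu_rK^{-r}\alpha_{p-r}$, and then obtain the two off-diagonal cases by the substitution $K\mapsto q_{(i)}^{\pm t}K$. In fact you supply more detail than the paper does at the one nontrivial point, namely the closed form for $\alpha_p$: the paper simply writes ``By induction, we have \eqref{k=l}'', whereas you make explicit the identity $\vartheta_m=-\sum_{r=1}^{m}\nu_r\vartheta_{m-r}$ and its proof by splitting compositions according to their first part, which is exactly what is needed to push the induction through.
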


\vskip 10mm


\begin{thebibliography}{7}


\bibitem{Bozec2014b} T. Bozec, {\em Quivers with loops and perverse sheaves}, Math.
Ann. {\bf 362} (2015), 773-797.

\bibitem{Bozec2014c} T. Bozec, {\em Quivers with loops and generalized
crystals}, Compositio Math. {\bf 152} (2016), 1999-2040.

\bibitem{BSV2016} T. Bozec, O. Schiffmann, E. Vasserot, {\em On the
number of points of nilpotent quiver varieties over finite fields},
arXiv:1701.01797.

\bibitem{Fan}Z. Fan, Y. Li, {\em Two-parameter quantum algebras, canonical bases and categorifications}, Int. Math. Res. Not. {\bf 16} (2015), 7016-7062.


\bibitem{HK2002}
J. Hong, S.-J. Kang,
{\em Introduction to Quantum Groups and Crystal Bases},
Graduate Studies in Mathematics {\bf 42},
Amer. Math. Soc., 2002.

\bibitem{Jan}
J.C. Jantzen, {\em Lectures on Quantum Groups}, Graduate Studies in Mathematics, vol.6, Amer. Math. Soc., 1996.

\bibitem{Kang2019} S.-J. Kang, {\em Borcherds-Bozec algebras, root multiplicities and the Schofield construction},
Communications in Contemporary Mathematics {\bf 21} (2019), no.3.

\bibitem{Kang2019b} S.-J. Kang, Y. R. Kim {\em Quantum Borcherds-Bozec algebras and their integrable representations}, arXiv:1912.06115.

\bibitem{KKS2009}
S.-J. Kang, M. Kashiwara, O. Schiffmann, {\em Geometric construction of crystal bases for quantum generalized Kac-Moody algebras}, Adv. Math.{\bf 222}  (2009), 996--1015.

\bibitem{KKS2012}
S.-J. Kang, M. Kashiwara, O. Schiffmann, {\em Geometric construction of highest weight crystals for quantum generalized Kac-Moody algebras},  Math. Ann. {\bf 354} (2012), 193--208.

\bibitem{Kas91}
M. Kashiwara, {\em On crystal bases of the $q$-analogue of universal enveloping algebras}, Duke Math. J. {\bf 63} (1991), 465--516.

\bibitem{KS1997}
M. Kashiwara, Y. Saito, {\em Geometric construction of crystal bases}, Duke Math. J. {\bf 89} (1997), 9--36.



\bibitem{Lusztig} G. Lusztig, {\em Introduction to quantum groups}, Modern Birkh\"{a}user Classics, Birkh\"{a}user/Springer, New York, 2010



\bibitem{Lus88}
G. Lusztig,
\emph{Quantum deformation of certain simple modules over enveloping
algebras},
Adv. Math. {\bf 70} (1988), 237--249.

\bibitem{Lus90}
G. Lusztig, \emph{Canonical bases arising from quantized enveloping algebras}, J. Amer. Math. Soc. \textbf{3} (1990), 447--498.




\bibitem{Saito2002}
Y. Saito, {\em Crystal bases and quiver varieties}, Math. Ann. {\bf 324} (2002), 675--688.




\end{thebibliography}
\end{document}